\newcommand{\mathleft}{\@fleqntrue\@mathmargin0pt}
\newcommand{\mathcenter}{\@fleqnfalse}
\def\AIC{\textsc{aic}}
\def\BIC{\textsc{bic}}
\DeclarePairedDelimiterX{\Iintv}[1]{\llbracket}{\rrbracket}{\iintvargs{#1}}
\NewDocumentCommand{\iintvargs}{>{\SplitArgument{1}{,}}m}
{\iintvargsaux#1} %
\NewDocumentCommand{\iintvargsaux}{mm} {#1\mkern1.5mu..\mkern1.5mu#2}
\newcommand{\overbar}[1]{\mkern 1.5mu\overline{\mkern-1.5mu#1\mkern-1.5mu}\mkern 1.5mu}
\DeclareMathOperator*{\argmin}{argmin}   
\DeclareMathOperator*{\argmax}{argmax} 
\newcommand{\Proba}{pr}
\newcommand{\Kl}{(k_l)_l\in \Iintv{1,K}^L}
\newtheorem{theorem}{Theorem}[section]
\newtheorem*{assumption*}{Assumption}
\newtheorem{assumption}{Assumption}[section]
\newtheorem{proposition}{Proposition}[section]
\newtheorem{lemma}{Lemma}[section]
\title{Clustering and Model Selection via Penalized Likelihood for
  Different-sized Categorical Data Vectors}
\author[1]{Esther Derman\thanks{esther.boccara@polytechnique.edu}}
\author[1]{Erwan Le Pennec\thanks{erwan.le-pennec@polytechnique.edu}}
\date{}
\affil[1]{CMAP, Ecole polytechnique, CNRS, Universit\'e Paris-Saclay, 91128, Palaiseau, France.}
\def\BState{\State\hskip-\ALG@thistlm}
\begin{document}
\maketitle

\begin{abstract}
In this study, we consider unsupervised clustering of categorical
vectors that can be of different size using mixture.
We use likelihood
maximization to estimate the parameters of the underlying mixture
model and a penalization technique to select the number of mixture
components.
 Regardless of the
true distribution that generated the data, we show that an explicit penalty,
known up to a multiplicative constant, leads to
 a non-asymptotic oracle inequality with the
Kullback-Leibler divergence on the two sides of the inequality. This theoretical result is illustrated by a document
clustering application. To this aim a novel robust
expectation-maximization algorithm is proposed 
to estimate the mixture parameters that best represent the
different topics. 
Slope heuristics are used to calibrate the penalty and to select a
number of clusters.
\bigbreak
 
\noindent
\textbf{Keywords}: Document clustering; Expectation-maximization algorithm; Multinomial mixture; Model selection; Penalized likelihood; Slope heuristics.
\end{abstract}

\section{Introduction}
\subsection{Motivation}
This study explores unsupervised clustering and model selection for multidimensional categorical data. Our basic block will be observations $X_l$ of $n_l$ independent and identically distributed instances of a categorical variable. We will assume we observe $L$ independent such random vectors $(X_1,\ldots,X_L)$, that they share the same categories but not necessarily the same distribution and the same size. Such an observation is equivalent to a vector of $L$ independent multinomial random variables, i.e. $L$ barplots, with the same number of modalities but not necessarily the same distribution and the same size.  Our objective is to cluster these observations, or the corresponding barplots, according to their similarities. 

Clustering of categorical data is used in many fields such as social sciences, health, genetics, text analysis, etc. Several clustering techniques exist in the literature such as K-means or hierarchical clustering to name a few, we have chosen to use a mixture model to cluster the data. The first challenge encountered in our framework is that our observations $X_l$s are not of the same size. In other words, all the barplots do not necessarily represent the same number of instances. To our knowledge, this particularity regarding clustering has not yet been introduced in the literature. \citet{GasRouVer2016} studied mixtures of multidimensional vectors in the case of independent and continuous marginal densities where they assume every observation has the same number of coordinates. In our framework, the $X_l$s may be of different dimensions besides being categorical, although a generalization to continuous variables should lead to similar properties. The most comparable work into this direction is the one of \citet{MatRebVil2015} who model interaction events between individuals assuming they are clustered according to their pairwise interactions. They approximate interaction intensities by piecewise constant functions and penalize the resulting likelihood to do model selection. 

A second challenge when dealing with our mixture model is the classical problem of choosing a proper number of clusters. An expectation-maximization algorithm is commonly used to estimate the best parameters in a given model \citep{McLPee2000}, but there is no direct method to do model selection. Several techniques have been studied to select the number of clusters efficiently. \citet{SilCarFig2014} propose a data-driven criterion for choosing the optimal number of clusters for categorical data. Based on a minimum message length criterion, they incorporate a model selection process inside the iterations of an expectation-maximization algorithm. Thanks to this method, they optimize time-consuming computation and avoid several runs of the same algorithm. On the same vein, \citet{RigCapYvo2006} estimate parameters of a multinomial mixture model for document clustering in a Bayesian framework and consider the model selection problem of choosing a dictionary that best discriminates the documents. Based on a text corpus, they alleviate the problem of high variability of the estimates in high dimension by deleting the most rare words from the dictionary. Using a categorical dataset of low back pain symptoms, \citet{FopSmaMur2017} address a Bayes factor criterion to select the most discriminative symptoms and cluster patients according to their symptoms' similarities. 

We consider a penalized maximum likelihood approach where the penalty addresses the problem of simultaneously selecting the best multinomial mixture within a collection and the best group assignment of the vectors underlying the selected mixture. A classical trade-off between bias and variance naturally appears, which depends on this twofold complexity. Our model selection criterion takes these two steps into account. We show that it estimates mixture parameters efficiently and that vectors are robustly assigned to a proper cluster whatever the size of the data.

\subsection{Results}

The main result of this paper is a non-asymptotic oracle inequality that gives a sufficient condition on the penalty such that our estimator performs almost as well as the best one. Non-asymptotic oracle inequalities for categorical observations have already been introduced in \citet{TouGas2009} and \citet{BonTou2013} in the context of genomics, where the authors cluster a population into subpopulations according to their alleles' categories. The observations studied in their analysis are considered as independent and identically distributed vectors that are equally informative and of the same length as in our setting, they are conditionally independent and the information our data give depends on the length of the observed variables. Moreover, we aditionaly take into account cluster assignment of the observations when evaluating the risk of our estimation.

Our theoretical work is built on Massart's methodology \citep{Mas2007} to compute the penalty function. Technical proofs are also inspired by the technical report of \citet{CohLep2011} (see also \citet{CohLep2012}) in which the authors use a penalized model selection technique with a maximum likelihood approach for conditional density estimation in a random design setting. Assuming we know which of the $L$ density laws generated each observation, we place ourselves in a fixed design setting with deterministic covariates, although our result can easily be generalized to random covariates cases. Under mild assumptions on model structure, the non-asymptotic oracle inequality we address keeps the form of a Kullback-Leibler risk on the left side of the inequality without passing by a weaker one as it was the case in previous works (see \cite{Mas2007}, \cite{MauMic2008Pen}, \cite{CohLep2012}, \cite{BonTou2013} and \cite{MonLeP2014}). 

This theoretical study is applied to document clustering of NIPS conferences from 1987 to 2015. In order to estimate the parameters, we use a robust expectation-maximization algorithm that avoids classical numerical issues of local maxima and convergence to space boundaries. Moreover, since our theoretical penalty is known up to a multiplicative constant, we calibrate an optimal one by using slope heuristics and select the best model accordingly. The resulting visualization of different topics result to our intuitive a priori on their characteristics and effectively describes an evolution of machine learning overtime.

\section{Model Selection Criterion}
\label{sectionThm}

\subsection{Framework and notations}
In our model, we observe a family of $L$ independent random vectors $(X_1,\ldots,X_L)$ where each $X_l$ represents $n_l$ independent and identically distributed instances $X_l^i$ of a random variable having $s_l$ as a true categorical density distribution with respect to the counting measure over B modalities, i.e.
\begin{align*}
  \Proba\left((X_1,\ldots,X_L) = (x_1,\ldots, x_L)\right) 
                                                           &= \prod_{l=1}^L \Proba\left((X_l^1,\ldots,X_l^{n_l})=(x_l^1,\ldots,x_l^{n_l})\right)\\
                                                          & = \prod_{l=1}^L \prod_{i=1}^{n_l} s_l(x_l^i).
\end{align*}
This corresponds to the observation of $n = \sum_{l=1}^L n_l$ independent but only groupwise identical random variables. 
Assume, for a moment, that those $L$ vectors can further be regrouped in $K$ groups sharing the same density distribution. A natural model for such a situation is a mixture model in which the latent group comes from a $K$-dimensional multinomial variable $Z$ that takes values in the set $\{1,\ldots, K\}$ of the different group labels and each group is characterized by its own density distribution $f_k$.  Let $\pi_k = \Proba(Z=k)$. The distribution of $Z$ is determined by the vector $\pi = (\pi_1,\ldots, \pi_K)$ which  belongs to the $(K-1)$-dimensional simplex $\mathbb{S}_{K-1}$ and the density distribution becomes
\begin{align*}
  \label{mixtureDistrib}
   \Proba\left((X_1,\ldots,X_L) = (x_1,\ldots, x_L)\right) &=
\prod_{l=1}^{L} \Proba(X_l=x_l)   = \prod_{l=1}^{L} \left(\sum_{k=1}^{K}   \pi_k  \Proba (X_l = x_l\mid Z_l = k)\right) \\
& = \prod_{l=1}^{L} \left\{ \sum_{k = 1}^{K} \pi_k \left( \prod_{i =
                                                                                                                1}^{n_l}f_k(x_l^i)\right) \right\}.
\end{align*}
It is then usual to assign to each vector $X_l$ its group tanks to a
maximum a posteriori principle:
\begin{equation*}
  k_l = \argmax_{k\in \{1,\ldots, K\}} \Proba(Z=k | X_l = x_l) = \argmax \left\{\pi_k \left( \prod_{i =
                                                                                                                1}^{n_l}f_k(x_l^i) \right)\right\}
\end{equation*}
We will never assume that such a model holds but use the
corresponding density functions as approximations of the true one and
infer a group for each vector $X_l$ with the same maximum a posteriori principle. For
any $K$, such a mixture density distribution is entirely specified by
$\theta=(\pi,f)=\left((\pi_1,\ldots,\pi_K),(f_1,\ldots,f_K)\right)$ in
$\Theta_{K} = \mathbb{S}_{K-1} \times \mathcal{F}_{K}$ with
$\mathcal{F}_{K}$ referring to a set of $K$ categorical functions that
will be specified later on. With a slight abuse of notation, we will
denote by $P_{(K, \theta)}(x)$ the likelihood of an observation
\begin{equation*}
\label{likelihood1Obs}
\begin{split}
P_{(K, \theta)}(x) &= \prod_{l=1}^L P_{(K, \theta)}(x_l) =
\prod_{l=1}^L  \left\{ \sum_{k = 1}^{K} \pi_k \left( \prod_{i = 1}^{n_l}f_k(x_l^i)\right)\right\}.
\end{split}
\end{equation*}
For a fixed $K$, the distribution density is chosen in the model $\mathcal{S}_K =
\{P_{(K, \theta)}: \theta \in \Theta_{K}\}$, while $K$ has to be
chosen among the set of positive integers smaller than $L$. Our task is
thus to estimate a good couple $(K,\theta)$.
 
By construction, $f_k$ is such that the class of multinomial
mixtures is identifiable as long as $B \geq 2K-1$, meaning that it is
fully determined by its parameters up to label switching, as  shown in \cite{ElmWan2003}.
\subsection{Maximum likelihood estimation}

We use a classical maximum likelihood estimator to obtain the mixture
parameters and use those parameters to assign groups.

More precisely, we define as $\gamma_n(\pi, f)$ the empirical contrast:
\begin{equation*}
\begin{split}
\label{ll}
\gamma_n(\pi, f) = \sum_{l = 1}^{L}-\log  \left(P_{(K, \theta)}(x_l) \right).
\end{split}
\end{equation*}
The estimated parameter denoted by $(\widehat{\pi}, \widehat{f})$ maximizes the log-likelihood of the observations, which is equivalent to minimizing the empirical contrast:
\begin{equation*}
\label{loglike}
(\widehat{\pi}, \widehat{f})= \argmin_{(\pi ,f )\in \Theta_K} \gamma_n(\pi, f).
\end{equation*}
To avoid existence issue, we will work with an almost minimizer and define an $\eta$ -log-likelihood minimizer as any $(\widehat{\pi}, \widehat{f})$ that satisfies:
\begin{equation}
\label{etaMLE}
\gamma_n(\widehat{\pi}, \widehat{f})     \leq   \inf_{(\pi ,f )\in \Theta_K}   \gamma_n(\pi, f)   + \eta,\quad\eta>0.
\end{equation}

Since maximum likelihood estimation assigns a zero probability to unobserved categories, the likelihood can be infinite in some cases. This drawback is classical in discrete settings. To avoid this issue, we make the following assumption:

\begin{assumption}[Model Structure]
\label{hypSurModele}
Any candidate density $f$ satisfies $f\geq \epsilon_n = e^{-\tau_n}$.

\end{assumption}

The value $\epsilon_n$ can typically be taken as $1/n$. Thus, $-\log(f) \leq \tau_n$, where $\tau_n = \log(n)$. This assumption is legitimate in a context of document clustering because categories correspond to words that appear in at least one text. 

We can now define a set  $\mathcal{F}_{K}$ of $K$ density functions
on $B$ categories so that any candidate satisfies Assumption
\ref{hypSurModele}. This set $\mathcal{F}_{K}$ is a simple product of
multinomial densities satisfying the assumption:
\begin{equation*}
\begin{split}
\mathcal{F}_K = \left\{(f(1),\ldots,f(B)) ; \sum_{b = 1}^{B} f(b) = 1 , e^{-\tau_n} \leq f(b) \leq 1 \right\}^K = \mathcal{F}^K.
\end{split}
\end{equation*}
Once parameters are estimated, an observation $x_l$ is assigned to the cluster it most likely belongs to. We use the maximum a posteriori method:
\begin{equation*}
\widehat{k_l} = \argmax_{k\in \Iintv{1,K}}  \left\{\widehat{\pi}_k \left(\prod_{i = 1}^{n_l} \widehat{f}_k  (x_l^i)\right)  \right\}.
\end{equation*}
Finally, we note $\widehat{f}_{\widehat {k}_l}$ the estimated distribution of observation $x_l$.


\subsection{Single model risk bound}

In this section, we prove that for a fixed $K$ such a scheme allows controlling the error between the true density function by a multiple of
the best possible mixture. We will measure the error with the 
Kullback-Leibler loss. Recall that Kullback-Leibler divergence is a natural quality measure to be considered in maximum likelihood approach. It is defined by
\begin{equation*}
\mathbf{KL}(s, t) = \int \log \left( \frac{s}{t}\right)sd\lambda,
\end{equation*}
where $\lambda$ is a known positive measure such that $sd\lambda$ is
absolutely continuous with respect to $td\lambda$.

Our first result is an oracle inequality that upper bound the
expectation of Kullback-Leibler divergence between our estimate and
the truth by the best possible one in the model $\mathcal{S}_K$ up to
a multiplicative factor and a \emph{variance} term that measures the
model complexity.  \cite{CohLep2012} show that under some general
assumption on this bracketing entropy, model complexity is
proportional to the model dimension. In our case, as we have an
assigment step, the dimensions of the considered models correspond to
the number of mixture parameters plus the cluster assignment cost.
The dimension of a mixture model with $K$
categorical functions of $\mathcal{F}_{K}$ is $KB-1$. For sake of
simplicity, we will rather use a \emph{dimension} $D_K = KB$.

\begin{theorem}
\label{thmSingleModelParticular}
Consider the observed vectors $(x_1,\ldots,x_L)$ as above and denote by $n$ the overall number of observations. Consider also one model $\mathcal{S}_{K}$ as defined above and assume it satisfies Assumption \ref{hypSurModele}. Let $(\widehat{\pi}, \widehat{f})$ be an $\eta$-likelihood minimizer in $\mathcal{S}_{K}$ as defined in equation \refeq{etaMLE}.
Denote the corresponding cluster assignment for each vector $x_l = (x_l^1,\ldots,x_l^{n_l})$:
\begin{equation*}
\widehat{k_l} = \argmax_{k \in \Iintv{1,K}} \left\{ \widehat{\pi_k} \left(\prod_{i = 1}^{n_l} \widehat{f_k}(x_l^i) \right) \right\}.
\end{equation*}
Then, for any $C_1>1$, there exist two constants $\lambda_0$ and $C_2$ depending only on $C_1$ such that the estimate $(\widehat{\pi}, \widehat{f})$ satisfies
\begin{equation*}
\begin{split}
E\left[\sum_{l = 1}^{L} \frac{n_l}{n} \mathbf{KL}(s_l, \widehat{f}_{\widehat{k_l}}) \right]     &\leq  C_1  \left\{ \inf_{\substack{f \in \mathcal{F}_K\\(k_l)_l \in \Iintv{1,K}^L}}  \left(\sum_{l=1}^{L} \frac{n_l}{n} \mathbf{KL}(s_l, f_{k_l})\right)       + \lambda_0\frac{  L\log K + \mu_nD_{K}}{n} \right\} \\
&\quad+ \frac{C_2}{n} + \frac{ \eta}{n},
\end{split}
\end{equation*}
where $\mu_n = 2(\log(2\tau_n)^{1/2} + \pi^{1/2})^2 + 1 + \log(n)$.
\end{theorem}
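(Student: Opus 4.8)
The plan is to recast the statement as a penalised maximum-likelihood model-selection problem over a finite collection of ``hard-assignment'' models and then apply the general oracle inequality of \citet{CohLep2011,CohLep2012}, built on Massart's methodology \citep{Mas2007}, using Assumption~\ref{hypSurModele} to keep the loss in Kullback--Leibler form on the left-hand side. For an assignment $\mathbf k=(k_l)_l\in\Iintv{1,K}^L$ and $f=(f_1,\dots,f_K)\in\mathcal F^K$ introduce the two genuine densities
\begin{equation*}
Q_{f,\mathbf k}(x)=\prod_{l=1}^{L}\prod_{i=1}^{n_l} f_{k_l}(x_l^i),\qquad S(x)=\prod_{l=1}^{L}\prod_{i=1}^{n_l}s_l(x_l^i)
\end{equation*}
on $\prod_{l}\Iintv{1,B}^{n_l}$. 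This is exactly a conditional-density estimation problem in the sense of \citet{CohLep2012}: the group index $l$ plays the role of a deterministic covariate carrying weight $n_l$, the model collection is indexed by the $K^L$ assignments $\mathbf k$, and each model $\{w\mapsto f_{k_w}:f\in\mathcal F^K\}$ has at most $D_K$ free parameters. Since Kullback--Leibler divergence tensorises exactly, $\mathbf{KL}(S,Q_{f,\mathbf k})=\sum_{l}n_l\,\mathbf{KL}(s_l,f_{k_l})$, so that both sides of the asserted inequality are, up to the normalisation $1/n$, a Kullback--Leibler risk and a Kullback--Leibler approximation error for this collection.

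First I would show that the mixture estimator is an almost-minimiser of $-\log Q_{f,\mathbf k}(x)$ jointly over $f$ and $\mathbf k$. The maximum-a-posteriori rule gives $\widehat\pi_{\widehat{k_l}}\prod_i\widehat f_{\widehat{k_l}}(x_l^i)=\max_{k}\widehat\pi_k\prod_i\widehat f_k(x_l^i)\ge K^{-1}P_{(K,\widehat\theta)}(x_l)$, hence, using $\widehat\pi_{\widehat{k_l}}\le1$, $-\log\prod_i\widehat f_{\widehat{k_l}}(x_l^i)\le\log K-\log P_{(K,\widehat\theta)}(x_l)$; summing over $l$ gives $-\log Q_{\widehat f,\widehat{\mathbf k}}(x)\le L\log K+\gamma_n(\widehat\pi,\widehat f)$. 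Conversely, for every $(f,\mathbf k)$, bounding the mixture density with uniform weights below by $K^{-1}\prod_i f_{k_l}(x_l^i)$ at coordinate $l$ gives $\inf_{\theta}\gamma_n(\theta)\le L\log K-\log Q_{f,\mathbf k}(x)$. Combining these two bounds with the $\eta$-minimiser property shows that $(\widehat f,\widehat{\mathbf k})$ minimises $-\log Q_{f,\mathbf k}(x)$ over the whole collection up to the additive constant $2L\log K+\eta$; since the penalty we shall use is constant in $\mathbf k$, this is exactly the hypothesis needed by the general theorem, and the reduction is where the $L\log K$ contribution to the variance term originates.

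Next I would feed two inputs to the general penalised-maximum-likelihood oracle inequality. (i) A bracketing-entropy bound for each model: $\mathcal F^K=\{f\in[\epsilon_n,1]^B:\sum_b f(b)=1\}^K$ is a product of $K$ truncated probability simplices, for which a direct computation gives $H_{[\,\cdot\,]}(\delta,\mathcal F^K,h)\lesssim D_K\big(\log(1/\delta)+\tau_n\big)$ in Hellinger distance $h$; the induced integrated-entropy bound, together with $\int_0^1\sqrt{-\log u}\,du=\tfrac{\sqrt{\pi}}{2}$, yields after the usual Birg\'e--Massart inversion a model complexity of order $D_K$ times $2(\log(2\tau_n)^{1/2}+\pi^{1/2})^2$. (ii) A summable weight family over the $K^L$ indices, for which $x_{\mathbf k}\equiv L\log K$ works with $\sum_{\mathbf k}e^{-x_{\mathbf k}}=1$. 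The concentration/chaining step controls, via Talagrand's inequality, the centred log-likelihood-ratio process $(f,\mathbf k)\mapsto-\log\{Q_{f,\mathbf k}(x)/S(x)\}+\mathbf{KL}(S,Q_{f,\mathbf k})$ localised in Hellinger distance; here the pointwise lower bound $\log\{\widehat f_{\widehat{k_l}}(x_l^i)/s_l(x_l^i)\}\ge-\tau_n$ granted by Assumption~\ref{hypSurModele} supplies a Bernstein-type control of range $\tau_n$, which lets the localisation argument be closed directly in Kullback--Leibler divergence rather than in squared Hellinger distance, the range $\tau_n$ ending up inside the complexity term, i.e.\ inside $\mu_n=2(\log(2\tau_n)^{1/2}+\pi^{1/2})^2+1+\log(n)$, in front of $D_K$. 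Collecting the bias term, the penalty $\lambda_0(L\log K+\mu_n D_K)/n$, the absolute constants from Talagrand's inequality into $C_2/n$ and the slack of the almost-minimiser into $\eta/n$, and undoing the tensorisation $\mathbf{KL}(S,\cdot)=\sum_l n_l\mathbf{KL}(s_l,\cdot)$ on both sides, produces the stated inequality for any $C_1>1$ with $\lambda_0$ and $C_2$ depending only on $C_1$.

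The main obstacle is that last point: genuinely obtaining the bound with Kullback--Leibler divergence on the left, without the $\log n$ factor contaminating the approximation error on the right. This forces one to revisit the proof of the general theorem rather than invoke an a posteriori conversion between Kullback--Leibler and Hellinger losses, and to track the chaining and Bernstein constants carefully enough to exhibit the explicit $\mu_n$. By contrast, the reduction to the hard-assignment collection and the bookkeeping of the $L\log K$ term, though conceptually central, is mechanically routine.
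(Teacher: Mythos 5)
Your proposal follows essentially the same route as the paper: the same reduction to hard-assignment densities with the $2L\log K$ bookkeeping, a union bound over the $K^L$ assignments, a bracketing-entropy bound on the truncated simplex yielding the $\mu_n D_K$ complexity via the Birg\'e--Massart inversion, and a Bernstein-type concentration argument exploiting the lower bound $f\ge e^{-\tau_n}$ to keep Kullback--Leibler on the left. The paper carries out the step you flag as the main obstacle by localising in the second-moment divergence $\mathbf{a}(f,g)=\frac{1}{n}\sum_l n_l E_{s_l}\left[\left|\log\left(f_{k_l}/g_{k_l}\right)\right|^2\right]$ rather than in Hellinger distance and converting back to Kullback--Leibler via Meynet's inequality, which is precisely the mechanism you describe.
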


In previous works (see \cite{Mas2007}, \cite{CohLep2012}, \cite{BonTou2013} and  \cite{MauMic2008Pen}), a lower divergence appears on the left side of the inequality, such as Hellinger distance or Jensen-Kullback-Leibler divergence. The main contribution of our result is that the empirical risk of the estimated density is measured according to the same averaged Kullback-Leibler divergence as the risk model appearing on the right side of the inequality. The upper bound addressed here is sharper and explicits more clearly the bias underlying density estimation because risk functions are the same on the left and right side of the inequality. This is made possible thanks to Assumption \ref{hypSurModele}, which enables to bound the moments of log-ratios of density distributions. A concentration inequality on the empirical Kullback-Leibler divergence can then be deduced. Without Assumption \ref{hypSurModele}, density distributions must be weighted by the true ones in order to bound a log-ratio which leads to a weaker divergence than \textbf{KL} (\cite{Mas2007} and \cite{CohLep2012}).

Moreover, this oracle inequality takes the two steps of the estimation into account, namely mixture parameters and clustering assignment of the observations. This explains the term in $L\log(K)$ appearing on the left side of the inequality. It measures the cost for assigning each observation to one cluster, as $D_K$ measures the complexity of mixture models. The overall \textit{variance} $\lambda_0\left(  L\log K + \mu_nD_{K}\right) $ clearly balances the bias term appearing in the inequality as the minimum of the divergence over the parameter set.  

Theorem~\ref{thmSingleModelParticular} turns out to be crucial in our
analysis because the model complexity that appears explicitly is the
one that we will use to define a suitable penalized criterion.

\subsection{Model selection theorem}

We focus now on the choice of $K$ and prove that this can be done with
a simple penalization of the likelihood.
Indeed, for any $K$, the oracle inequality of
Theorem~\ref{thmSingleModelParticular} holds as soon as
Assumption~\ref{hypSurModele} holds.  One of the models minimizes the
right hand side of the inequality but there is no way of knowing which
one without also knowing the true densities $s_1,\ldots,s_L$.

We propose thus a data-driven strategy to select $K$ that performs
almost as well as we had known the best $K$. We use a penalized criterion
\begin{equation*}
\mathbf{crit}(K) = \min_{(\pi, f) \in  \Theta_K}\gamma_n(\pi, f)  + \mathbf{pen}(K),
\end{equation*}
where $\mathbf{pen}: K \rightarrow \mathbb{R}^+$ denotes the penalty function. 
Our analysis suggests the use of a penalty of the form $\mathbf{pen}(K) =
\lambda_0( D_K + L\log(K))$, where $D_K$ plays the role of mixture
complexity and $L\log(K)$ corresponds to cluster assignment cost,
similar to the quantity . This relates to classical penalties such as $\AIC = D_K/n$ and $\BIC = D_K\log(n)/(2n)$ that are proportional to model dimension, but such penalties require large sample sizes in order to be consistent. The penalty we propose is consistent with non-asymptotic sample size and performs well with respect to a theoretical loss function $\mathbf{KL}$. 

The main result of this study is the following theorem that compares
the risk of the selected model with the risk of the unknown best model
and shows that for a suitable choice of the constant in the penalty function, the estimated model still performs almost as well as the best one.

\begin{theorem}
\label{thmModelSelectionParticular}
Consider the observed vectors $(x_1,\ldots,x_L)$ and denote by $n$ the
overall number of observations. Consider also the collection
$(\mathcal{S}_K)_{K \in\Iintv{1,L}}$ of models satifying Assumption~\ref{hypSurModele} defined above.

Let $\mathbf{pen}$ be a non-negative penalty function and  $\hat{K}$  any
$\eta$-minimizer of
\begin{equation*}
\mathbf{crit}(K) = \min_{(\pi, f) \in  \Theta_K}\gamma_n(\pi, f)  +
\mathbf{pen}(K).
\end{equation*}
Let $(\widehat{\pi}, \widehat{f})$ be the corresponding
$\eta$-likelihood minimizers in $\mathcal{S}_{\widehat{K}}$ and define
the resulting cluster assignment for each vector $x_l = (x_l^1,\ldots,x_l^{n_l})$:
\begin{equation*}
\widehat{k_l} = \argmax_{k \in\Iintv{1,\widehat{K}}} \left\{ \widehat{\pi_k} \left(\prod_{i = 1}^{n_l} \widehat{f_k}(x_l^i) \right) \right\}.
\end{equation*}
Then, for any constant $C_1 >1$, there exist two constants $\lambda'_0$ and $C_2$ depending only on $C_1$ such that if the penalty function is defined as:
\begin{alignat*}{2}
  \mathbf{pen}: 
  K &\longmapsto& \lambda'_0 \left(\mu_nD_{K} +L\log( K) + K\log(2) \right)
\end{alignat*}
with $\mu_n = 2(\log(2\tau_n)^{1/2} + \pi^{1/2})^2 + 1 + \log(n)$, then, whatever the underlying true densities $s_1,\ldots,s_L$, 
\begin{equation*}
\begin{split}
E\left(\sum_{l =1}^{L}  \frac{n_l}{n} \mathbf{KL}(s_l, \widehat{f}_{\widehat{k_l}} ) \right)
&\leq C_1    \inf_{K} \left\{  \inf_{\substack{f \in \mathcal{F}_K\\(k_l)_l \in \Iintv{1,K}^L}}  \left(\sum_{l = 1}^{L}\frac{n_l}{n}  \mathbf{KL}(s_l, f_{k_l})  \right) +  \frac{\mathbf{pen}(K)}{n}
 \right\}  \\
 &\quad+ \frac{C_2}{n} +\frac{\eta}{n} .
\end{split}
\end{equation*}
\normalsize
\end{theorem}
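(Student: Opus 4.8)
The plan is to deduce Theorem~\ref{thmModelSelectionParticular} from the single-model analysis underlying Theorem~\ref{thmSingleModelParticular} by a standard model-selection argument à la Massart, the only new ingredient being that the "dimension" of each model now includes the assignment cost $L\log K$. First I would recall the core of the proof of Theorem~\ref{thmSingleModelParticular}: there, for a fixed $K$, one controls the empirical process $\gamma_n - \gamma$ uniformly over $\mathcal{S}_K$ by a concentration inequality (of Bernstein/Talagrand type) on the empirical Kullback--Leibler divergence, which is available precisely because Assumption~\ref{hypSurModele} bounds $-\log f$ by $\tau_n$ and hence controls all moments of the log-ratios. That concentration step produces, with probability at least $1 - e^{-t}$, a bound of the form (empirical excess risk) $\lesssim$ (true excess risk) $+ (L\log K + \mu_n D_K + t)/n$. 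I would re-run this argument but, crucially, keep the deviation parameter $t$ explicit rather than integrating it out model-by-model.

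The second step is the union bound over $K \in \Iintv{1,L}$. For each $K$ pick a weight $x_K$ and apply the fixed-$K$ concentration inequality with $t = x_K + u$; intersecting over all $K$ costs a factor $\sum_K e^{-x_K}$, so one wants $\sum_K e^{-x_K} \leq \Sigma < \infty$. The natural choice here is $x_K = K\log 2$ (so that $\sum_{K\ge 1} 2^{-K} = 1$) — this is exactly the extra $K\log 2$ term that appears in the penalty of Theorem~\ref{thmModelSelectionParticular} but not in the single-model Theorem~\ref{thmSingleModelParticular}. With this choice, on an event of probability at least $1 - e^{-u}$ the inequality
\[
\gamma_n(\widehat\pi,\widehat f) - \gamma_n(\pi,f) \;\ge\; c\Bigl(\text{excess }\mathbf{KL}\text{ risk}\Bigr) \;-\; \frac{\lambda'_0\bigl(\mu_n D_K + L\log K + K\log 2\bigr) + Cu}{n}
\]
holds simultaneously for every $K$ and every $(\pi,f)\in\Theta_K$.

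The third step is the standard selection trick. By definition of $\hat K$ as an $\eta$-minimizer of $\mathbf{crit}$, for any competitor $K$ and any $(\pi,f)\in\Theta_K$,
\[
\gamma_n(\widehat\pi,\widehat f) + \mathbf{pen}(\hat K) \;\le\; \gamma_n(\pi,f) + \mathbf{pen}(K) + \eta .
\]
Plug the concentration bound (applied both at $\hat K$ and at $K$) into this inequality: the $\mathbf{pen}(\hat K)$ on the left absorbs the fluctuation term $\lambda'_0(\mu_n D_{\hat K} + L\log\hat K + \hat K\log 2)/n$ coming from the $\hat K$-model (this is where the penalty must dominate the variance, forcing $\lambda'_0$ large enough in terms of $C_1$), while $\mathbf{pen}(K)/n$ survives on the right-hand side. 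After rearranging, one obtains, on the event of probability $\ge 1-e^{-u}$,
\[
\sum_{l} \tfrac{n_l}{n}\mathbf{KL}(s_l,\widehat f_{\widehat k_l}) \;\le\; C_1\inf_K\Bigl\{\inf_{f,(k_l)_l}\sum_l \tfrac{n_l}{n}\mathbf{KL}(s_l,f_{k_l}) + \tfrac{\mathbf{pen}(K)}{n}\Bigr\} + \tfrac{C(u+1)+\eta}{n}.
\]
Finally, integrate over $u$ (using $E[Y]\le \int_0^\infty \Proba(Y>v)\,dv$ applied to the nonnegative part of the excess risk) to turn the in-probability statement into the claimed expectation bound, with $C_2$ collecting the constant from $\int_0^\infty e^{-u}\,du$ together with the residual constants of the single-model bound.

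The main obstacle, as in Theorem~\ref{thmSingleModelParticular}, is the concentration step: one must show that the empirical KL-type process is sub-exponential with the right variance proxy, uniformly over $\mathcal{S}_K$, and here the per-observation log-likelihood is a sum over the $n_l$ i.i.d.\ instances inside vector $x_l$ plus a $\log\sum_k \pi_k(\cdot)$ term, so the bracketing-entropy control of \cite{CohLep2012} has to be applied to this mixture structure while tracking the $n_l/n$ weights. Assumption~\ref{hypSurModele} is what makes the variance term controllable by the expectation of the log-ratio itself (rather than a Hellinger surrogate), which is exactly why the same $\mathbf{KL}$ appears on both sides; reusing this verbatim from the single-model proof, the passage to the model collection is then routine. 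A minor technical point is the dependence of $\mu_n$ (through $\tau_n=\log n$) and of $D_K = KB$ on the problem size, but since $B$ and $n$ are fixed throughout and the sum over $K\le L$ is handled by the geometric weights $2^{-K}$, this causes no difficulty.
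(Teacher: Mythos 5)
Your proposal is correct and follows essentially the same route as the paper: the general model-selection theorem in the appendix is proved exactly by applying the single-model concentration lemma with Kraft weights $x_K = K\log 2$ (whence $\sum_{K\ge 1}2^{-K}=1$ and the extra $K\log 2$ in the penalty), requiring the penalty to dominate the variance term $\mu_n D_K + L\log K + x_K$ up to a constant $\kappa>1+\kappa_0$, and then integrating the resulting tail bound in the deviation parameter. The only cosmetic difference is that the paper routes the selection trick through an auxiliary set $\mathcal{M}'$ of candidate models to guarantee existence of the minimizer for countable collections, a technicality that your direct union bound over $K\le L$ renders unnecessary.
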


The inequality looks similar to the one of
Theorem~\ref{thmSingleModelParticular} and indeed the two differences
are the infimum over $K$, which was not present in the first theorem and the penalty function, which is equal to
the \emph{variance} term of the first theorem up to an
additional term $K\log(2)$ appearing in the penalty function.
This small correction is not intrinsic but is required to apply a
union bound over all $K$ in the proof. The main gain is the infimum
over $K$ that ensures that our estimate is almost as good as the one
leading the best single model oracle inequality. Remark that the
infimum is not restricted to $K\leq L$ because any solution with $K>L$
can advantageously be replaced by one with $L$ groups.


\section{Model Selection on a text corpus}
\subsection{NIPS Conference Papers}
In this section, we illustrate our model selection method by an
application to document clustering. Note that we will note use the
constant appearing in Theorem~\ref{thmModelSelectionParticular} but
only the shape of the penalty to select the number of cluster. Note
that the code and the datasets used are available at
\url{https://github.com/EstherBoc/SourceCodeDocumentClustering}.

The dataset "NIPS Conference Papers 1987-2015 Data Set" contains distribution of words used in NIPS conference papers published from 1987 to 2015 \citep{PerJenSpaTeh2016}. Based on a dictionary of $11463$ unique words that appear in $5811$ conference papers, the dataset is represented as a matrix with $11463$ rows and $5811$ columns. Each row represents the number of occurrences of the corresponding word in each document. Problems of dimensionality and low performance are avoided by removing rare words \citep{RigCapYvo2006}. 

To allow for better distinction between clusters, words that appear in more that $80\%$ of the documents are removed. Only $B = 300$ most frequent words are finally considered in the remaining counting matrix. Documents that are empty with respect to this reduced dictionary are removed as well, which leads to $L = 5804$ text documents in the exploited dataset. In our analysis, the vocabulary is constructed a priori from the most frequent and discriminative words and stays fixed. In the supplementary material, model selection with varying dictionary size is addressed in a case where the set of words is assumed to be ordered. In practice, the whole vocabulary is considered there so that different models can be compared to each other. However, the proportions of the first $B$ words can be different, whereas all the others are uniform on the remaining probability. 

The objective is to calibrate the penalty and select the best mixture model, that is, a number of clusters $\widehat{K}$ that has a good bias-variance tradeoff: as too many components may result to an over-fitting, a mixture with too few components may be too restrictive to approximate well the mixture underlying the data. Although time structure of the corpus is ignored in our model selection, it is analyzed in Section \ref{backMap}. One could also consider the corpus as a spatial mixture model with mixture proportions modeling time structure in the articles \citep{CohLep2014}.

\subsection{Clustering algorithm}
Mixture parameters are estimated thanks to the expectation-maximization algorithm. It heavily depends on its initial parameters, so the log-likelihood often converges to a local maximum. We ran 500 short expectation-maximization algorithms from random initializing parameters to analyze the sensitivity of the log-likelihood with respect to the initialization. Figure \ref{initialParamDensity} shows that in practice, despite high model dimension (9000 in this case), the log-likelihood keeps the same order of magnitude with high probability. Although some runs perform poorly, a high majority of them result to the same order of log-likelihood. Thus, a natural way of avoiding local maxima is to run several short expectation-maximization algorithms (with 15 iterations by default) from randomly chosen initializing parameters and run a long one from the most performing parameter in terms of likelihood \citep{TouGas2009}.

\begin{figure}[!!h]
\centering
\includegraphics[scale=0.45]{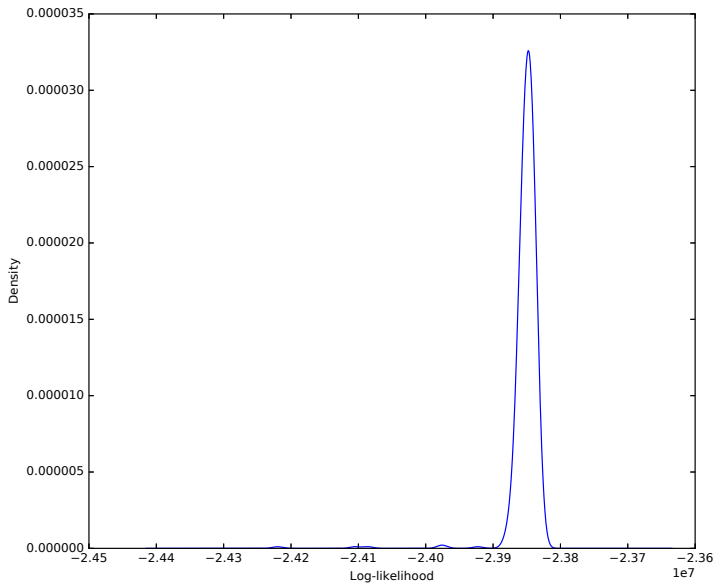}
\captionsetup{justification=centering,margin=2cm}
\caption{\label{initialParamDensity} Kernel Density Estimation of log-likelihood distribution based on 500 random initializing parameters after 15 iterations of EM (K = 30, B = 300)}

\end{figure}

The  expectation-maximization algorithm has another issue: it may converge to the boundary of the parameter space, in which case the output estimates are unstable and some mixture proportions become under-represented \citep{FigJai2002}. This especially occurs in high dimension. B. Zhang, C. Zhang and Xing Yi \cite{ZhaZhaYi2004} proposed a \emph{split and merge} method that divides or merges clusters according to their entropy information but despite random initializations, splitting may decrease stability of the estimates. 

It has been developed in \cite{FigJai2002} that component annihilation leads to more robust results. This observation introduced the expectation-maximization-minimum-message-length algorithm (EM-MML) that starts from a high number of clusters \citep{SilCarFig2014}. At each iteration, mixture proportions are penalized, and if one of them goes under zero, the corresponding cluster is annihilated. The procedure continues until an optimal number of clusters is reached. By simultaneously dealing with the number of components and the estimates, this technique avoids time-consuming computation. However, besides keeping some initialization dependency, the EM-MML penalizes parameters according to a $\BIC$-type function that leads to non robust results in a non asymptotic framework. Further work of Yan, Lai and Lin \cite{YanLaiLin2012} considers all the data set as an initializing parameter. The algorithm progressively annihilates clusters that give decreasing information with respect to Shannon entropy. Although less sensitive to initialization, this technique is time-consuming and based on a penalty that does not take data dimension into account. 

We address a robust expectation-maximization algorithm (EM) that takes these issues into account. It proceeds as follows. For each model, clusters with low proportion estimates are annihilated while other parameters remain the same. After renormalization, additional EM are initialized from the remaining parameters and run until no low proportion mixture is met. This prevents the EM algorithm from approaching the boundary of the parameter space. It thus leads to more robust results in a sense that a 100 component mixture often performs as well as a well-balanced 50 component mixture. Algorithm \ref{robustEM} shows the detailed pseudocode of the algorithm. We remarked that despite annihilation, the number of remaining clusters still increases with respect to the number of components initially input. Several models with different dimensions can thus be explored for penalty calibration.

\begin{algorithm}
\caption{Robust EM algorithm}
\renewcommand{\algorithmicrequire}{\textbf{Input:}}
\renewcommand{\algorithmicensure}{\textbf{Output:}}
\label{robustEM}

\begin{algorithmic}[1]
\Require $k_{\max}$
\Ensure $k_{\text{opt}} ,  \widehat{ \pi}^{(t)}_{i_{\max} }, \widehat{f}^{(t)}_{i_{\max} },  \widehat{\mathcal{L}}^{(t)}_{i_{\max} } $

\Procedure{robustEM}{}

\State $t \gets 0$
\State  $k_{\text{current}} \gets k_{\max}$
\State $\pi_{\text{threshold}} \gets \frac{1}{100 \cdot  k_{\text{current}}}$
\State $\mathcal{L}^{(0)}  \gets -\infty$
\For {i = 1 to 15}
\State $\pi^{(0)}_i, f^{(0)}_i \gets  \Call{initialize}{\pi, f}$
\State $\widehat{\pi}^{(10)}_i , \widehat{f}^{(10)}_i, \widehat{\mathcal{L}}^{(10)}_i  \gets \Call{shortEM}  {k_{\text{current}}, \pi^{(0)}_i, f^{(0)}_i }$
\EndFor

\State $t \gets 10$
\State $i_{\max} \gets \argmax_{i}  \widehat{\mathcal{L}}^{(10)}_i$

\While {$\widehat{\pi}^{(t)}_{i_{\max} } < \pi_{\text{threshold}}$}
\State $k_{\text{current}} \gets \sharp \{ k : \widehat{\pi}^{(t)}_{i_{\max} } (k) \geq \pi_{\text{threshold}}  \} $
\State $\pi_{\text{threshold}} \gets \frac{1}{100 \cdot  k_{\text{current}}}$
\State $\widehat{\pi}^{(t + u)}_{i_{\max} } , \widehat{f}^{(t + u)}_{i_{\max} }, \widehat{\mathcal{L}}^{(t + u)}_{i_{\max} } \gets \Call{EM}{k_{\text{current}}, \pi^{(t)}_{i_{\max} }, f^{(t)}_{i_{\max} }}$
\State $t \gets t + u$
\EndWhile
\State $k_{\text{opt}} \gets k_{\text{current}}$
\EndProcedure

\end{algorithmic}
\end{algorithm}

\subsection{Slope heuristics}
Besides parameter estimation, we want to calibrate our theoretical penalty for selecting a model according to the penalized criterion thus obtained. 

Penalized log-likelihood enables to select a model that has the right
bias-variance tradeoff. Unlike $\AIC$ and $\BIC$ functions, the penalty
introduced in Theorem \ref{thmModelSelectionParticular} is adapted to
a non-asymptotic framework. However, it is defined up to an unknown multiplicative constant. In fact, any greater penalty satisfies the oracle inequality but may lead to a model with high bias. Through slope heuristics, \cite{BauMauMic2010} provide a practical technique to calibrate the constant that leads to an optimal penalty. It relies on the fact that the empirical contrast of the estimated parameter is linear with respect to the model dimension when the model is complex enough. Indeed, for most complex models, the bias term becomes stable so the risk behaves as the variance term and becomes linear with respect to the dimension. Denoting by $\lambda_{\text{min}}$ the slope of the linear part of the empirical contrast, the optimal penalty function is
\begin{equation*}
\mathbf{pen}_{\text{opt}}(K) = 2 \lambda_{\text{min}} D_K,
\end{equation*}
with $D_K$ the model dimension. The derivation of this formula is detailed in \cite{MauMic2008} and \cite{BauMauMic2010}. 

We use slope heuristics to calibrate penalty for NIPS data. Practical methodologies and visualizing graphics were implemented thanks to the R package \textbf{capushe}. The maximum number of clusters in the collection is set to $K_{\text{max}} = 100$. Linear regression is operated with different number of points from which we obtain different slope coefficients. The technique used to deduce the minimal constant is described in \cite{BauMauMic2010}. Figure \ref{slopeHeur} shows linear regression of the log-likelihood with respect to the selected number of points. It gives a slope of $\widehat{\lambda}_{\text{min}} \approx 15$. Finally, the model that minimizes the resulting criterion corresponds to $\widehat{K} = 31$ clusters.

\begin{figure}[!!h]
\centering
\includegraphics[scale=0.5]{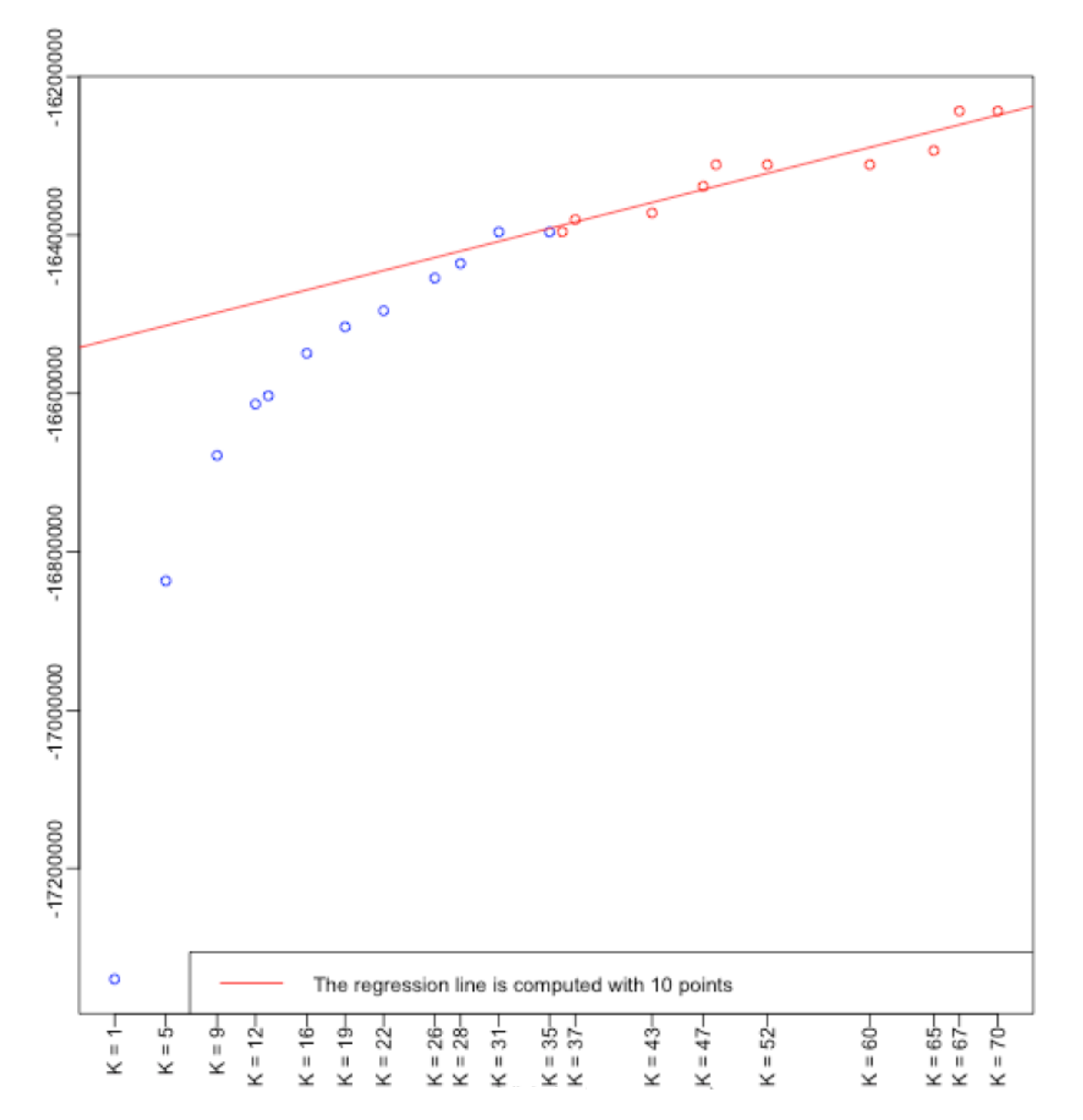}
\captionsetup{justification=centering,margin=2cm}
\caption{\label{slopeHeur} Slope heuristics on NIPS data}
\end{figure} 

\subsection{Back-mapping of NIPS topics}
\label{backMap}
We used the selected model of $\widehat{K} = 31$ clusters and its corresponding parameter estimates to analyze the evolution of some representative NIPS topics overtime. Topics are characterized by their distribution of words. Thus, each cluster can be labeled by looking at the most likely words.

At each year, the average posterior probability of the corresponding articles over every cluster gives a year-scaled evolution of topics from 1987 to 2015. Figures \ref{cluster30} and \ref{topicEvolution} show time evolution of some representative topics and their most likely words.  

The word-cloud in Fig. \ref{cluster30wc} represents the categorical distribution of words. The bigger is a word, the higher its probability is. Clusters become easily interpretable thanks to this representation. Based on its biggest words, we can deduce that Cluster 30 corresponds to reinforcement learning. This topic appeared in the late 80s and knew some ups and downs from then on (Fig. \ref{cluster30}). In Fig. \ref{topicEvolution}, only the most probable words are represented. Figure \ref{cluster0} indicates that the corresponding topic is Bayesian inference and that it constantly increased overtime. So did the popularity of optimization problems (Fig. \ref{cluster18}) and sparsity (Fig. \ref{cluster24}). Classification and support vector machines in particular got most popular in the late 90s before going through a decline (Fig. \ref{cluster8}). Clustering methods knew the same trend, except that their popularity decreased later on. A constant decline of popularity can also be noticed regarding neural networks architecture (Fig. \ref{cluster19}). This could be explained by the fact that the approach and vocabulary on neural networks clearly evolved since 2011, leading to deep learning and its new vocabulary. These trends give a clear overview of how NIPS conferences evolved since 1987 and are similar to those obtained by \cite{PerJenSpaTeh2016} who use Poisson random fields to model the dynamic of the documents' features.


\section{Discussion}
In this paper, we have considered the problem of estimating several
distributions of different-sized categorical variables. We have been
able to prove that the number of components can be estimated almost
optimally by a penalized maximum likelihood principle. The theoretical
analysis has been completed by a numerical illustration on text
clustering in which we have proposed a better initialization scheme
for the expectation-maximization algorithm used to estimate the parameters as well as a
calibration of the penalty.

The results obtained with categorical variables can directly be extended to
bounded continuous ones. Consider a family of $L$ independent random vectors
$(X_1,\ldots,X_L)$ where each $X_l$ represents $n_l$ independent and
identically distributed instances of a random variable that has $s_l$
as a true continuous density distribution. Under the same theoretical
assumptions as those addressed in our work, the same kind of results
can be obtained if we approximate each density by lower bounded
piecewise constant functions. The maximum likelihood estimator on a
model of densities that are piecewise constant on a given partition
would then correspond to a histogram estimator and the objective
would be to estimate $K$ clusters and the best corresponding
histograms. In \cite{Mas2007}, the problem of estimating the best
partition when observing independent and identically distributed
variables is addressed. The theoretical results that are obtained
there could be extended to conditionally independent variables that
are clustered according to their densities' similarities. However, the lower-bound on the model densities as stated in Assumption \ref{hypSurModele} can be no longer applicable 
if densities can take extreme values (Gaussian density for example), because these cannot be controlled in the estimation.

\begin{figure}[!ht]
     \subfloat[Time evolution of reinforcement learning\label{cluster30}]{%
       \includegraphics[width=0.5\textwidth]{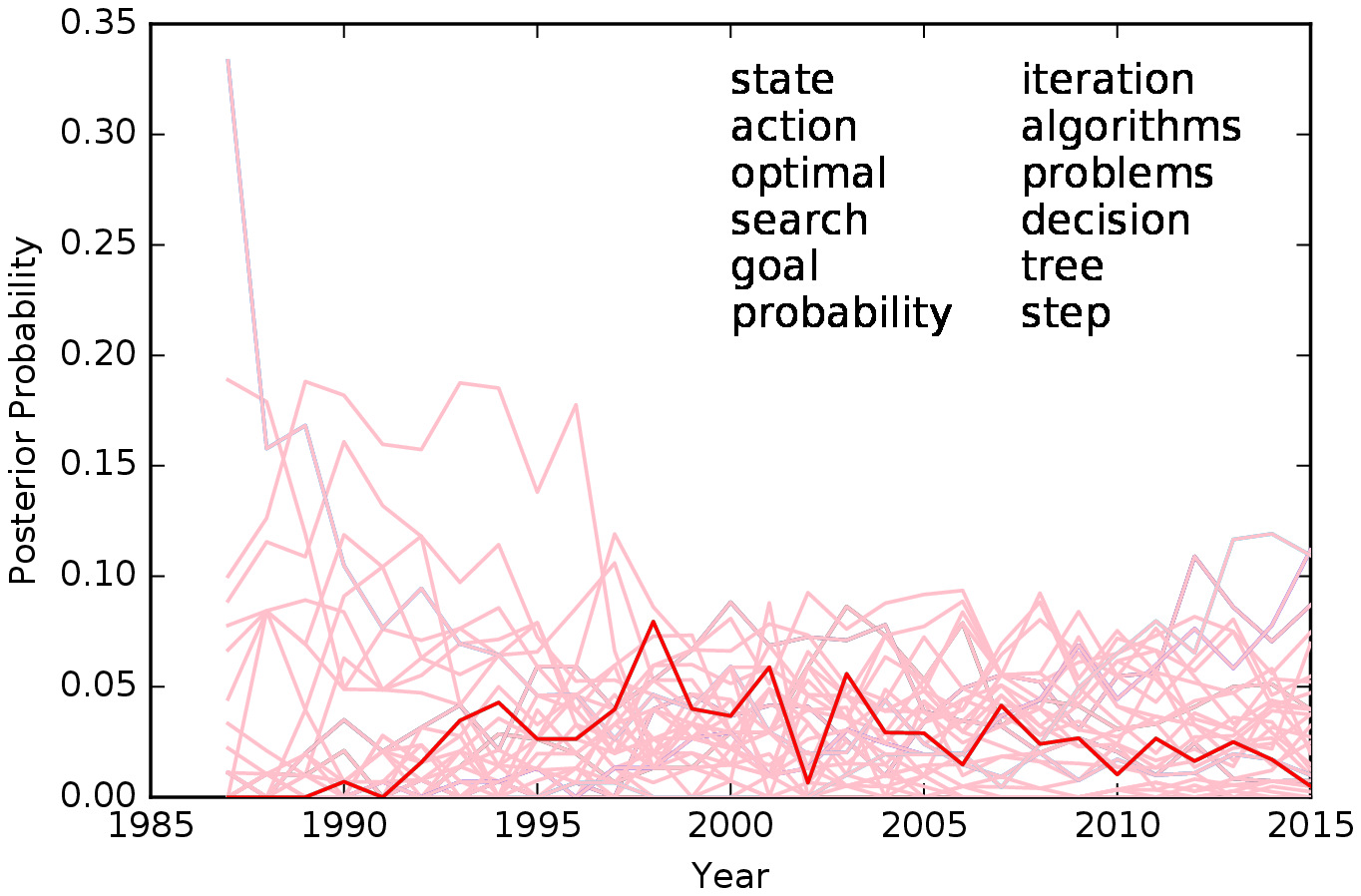}
     }
     \hfill
     \subfloat[Word-cloud of reinforcement learning topic\label{cluster30wc}]{%
       \includegraphics[width=0.45\textwidth, height = 4.8cm]{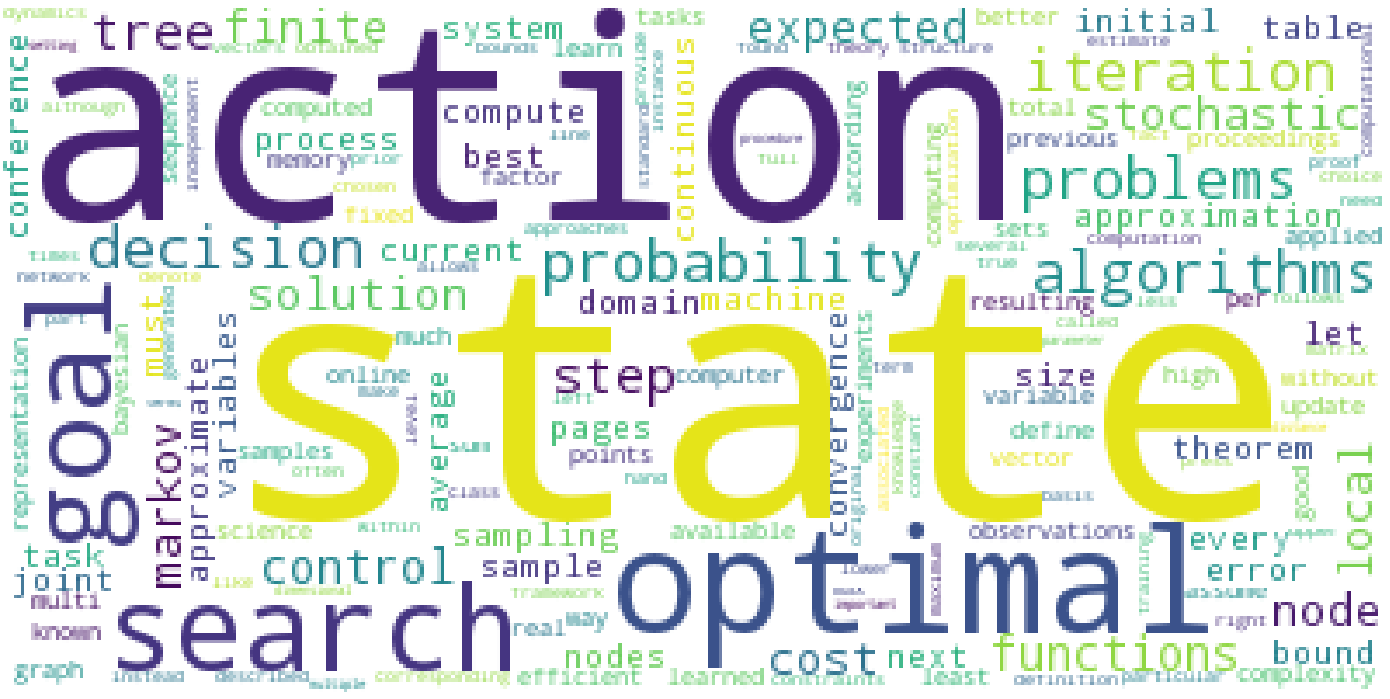}
       }
     
     \caption{Time evolution and word-cloud of reinforcement learning}
     \label{cluster30}
\end{figure}

\begin{figure}[!ht]
     \subfloat[Bayesian inference\label{cluster0}]{%
       \includegraphics[width=0.55\textwidth]{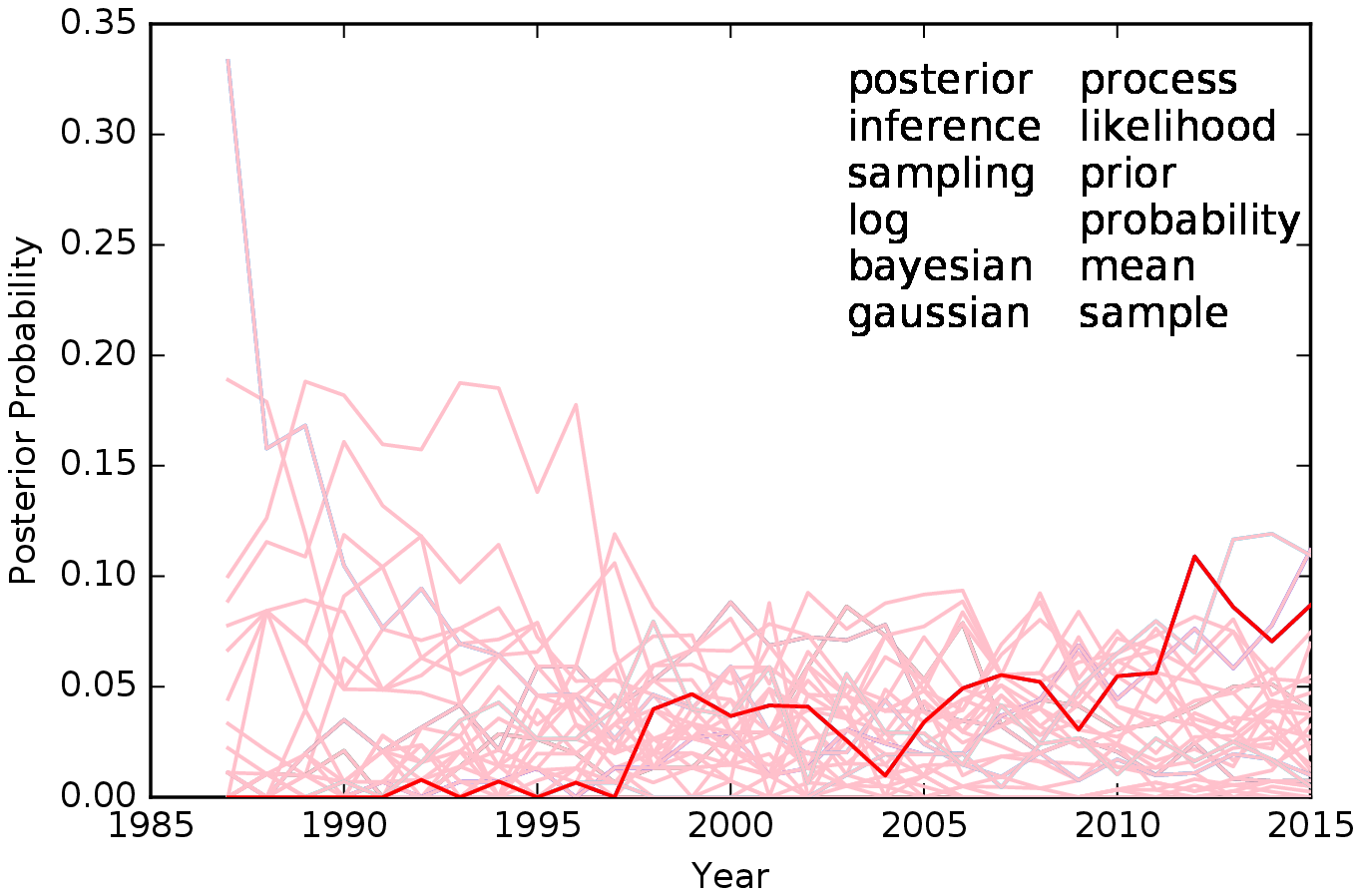}
     }
     \hfill
     \subfloat[Graph Clustering\label{cluster2}]{%
       \includegraphics[width=0.55\textwidth]{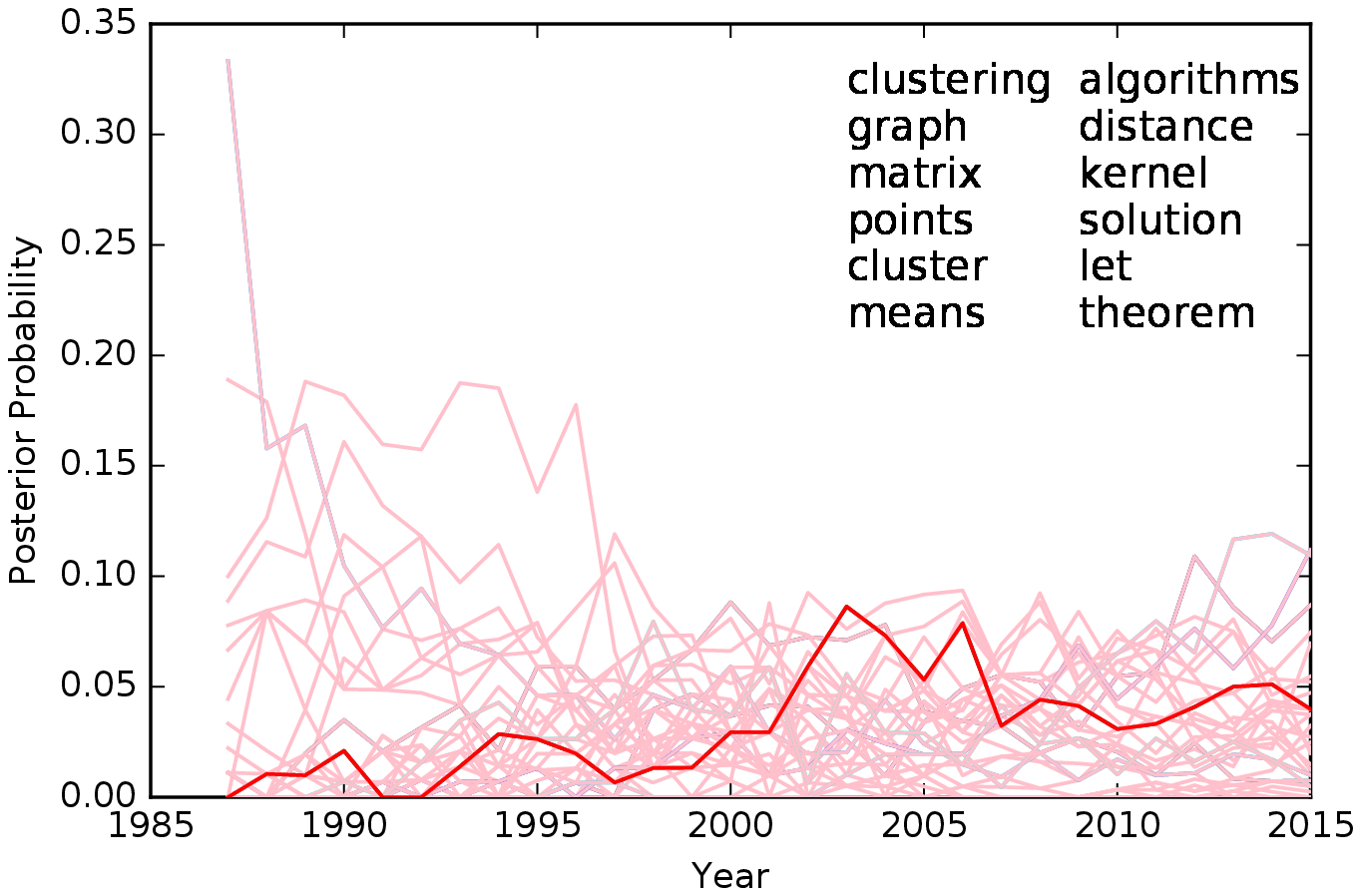}
     }
     \hfill
     \subfloat[Support vector machines\label{cluster8}]{%
       \includegraphics[width=0.55\textwidth]{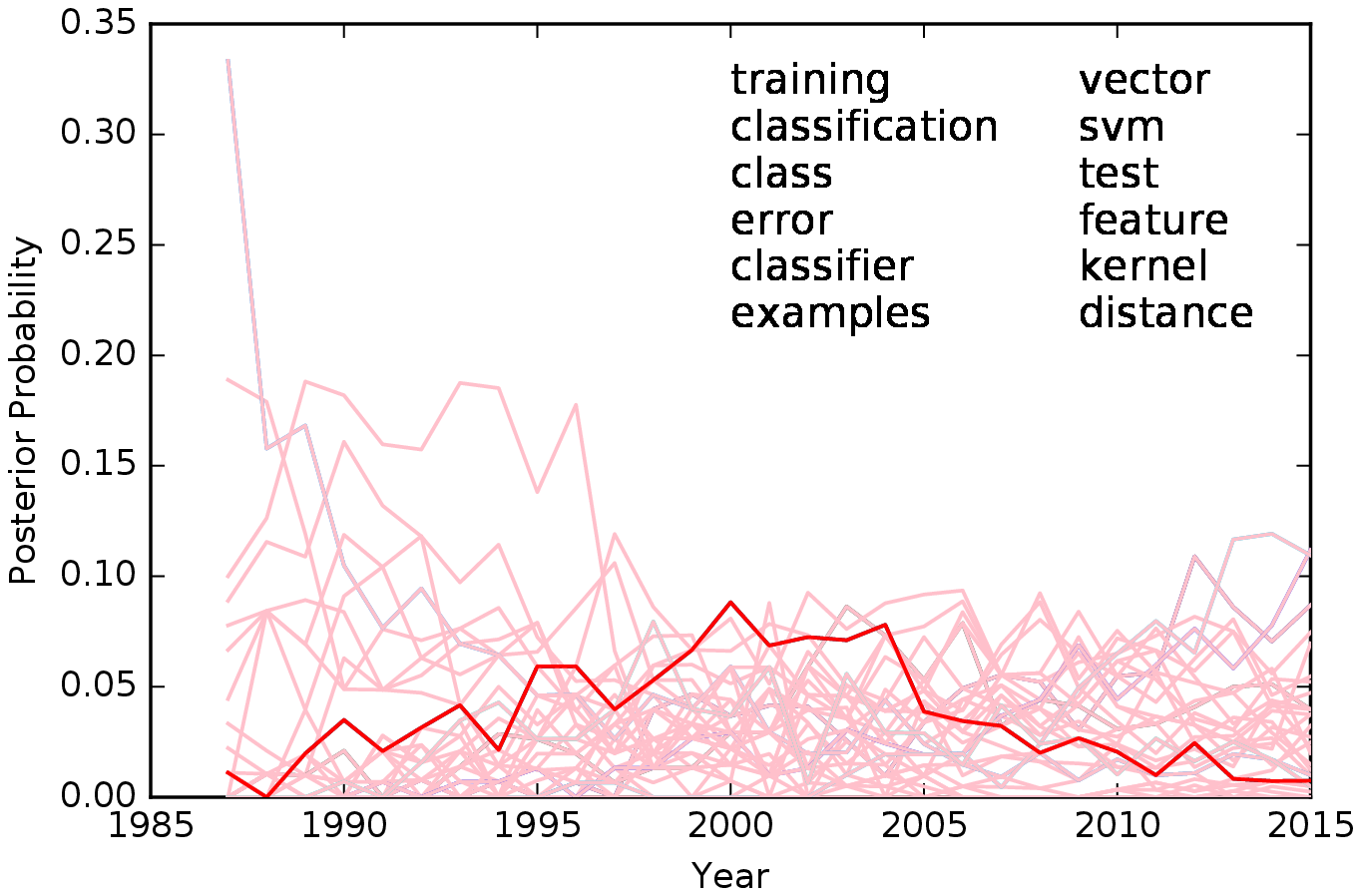}
     }
     \hfill
     \subfloat[Optimization problems\label{cluster18}]{%
       \includegraphics[width=0.55\textwidth]{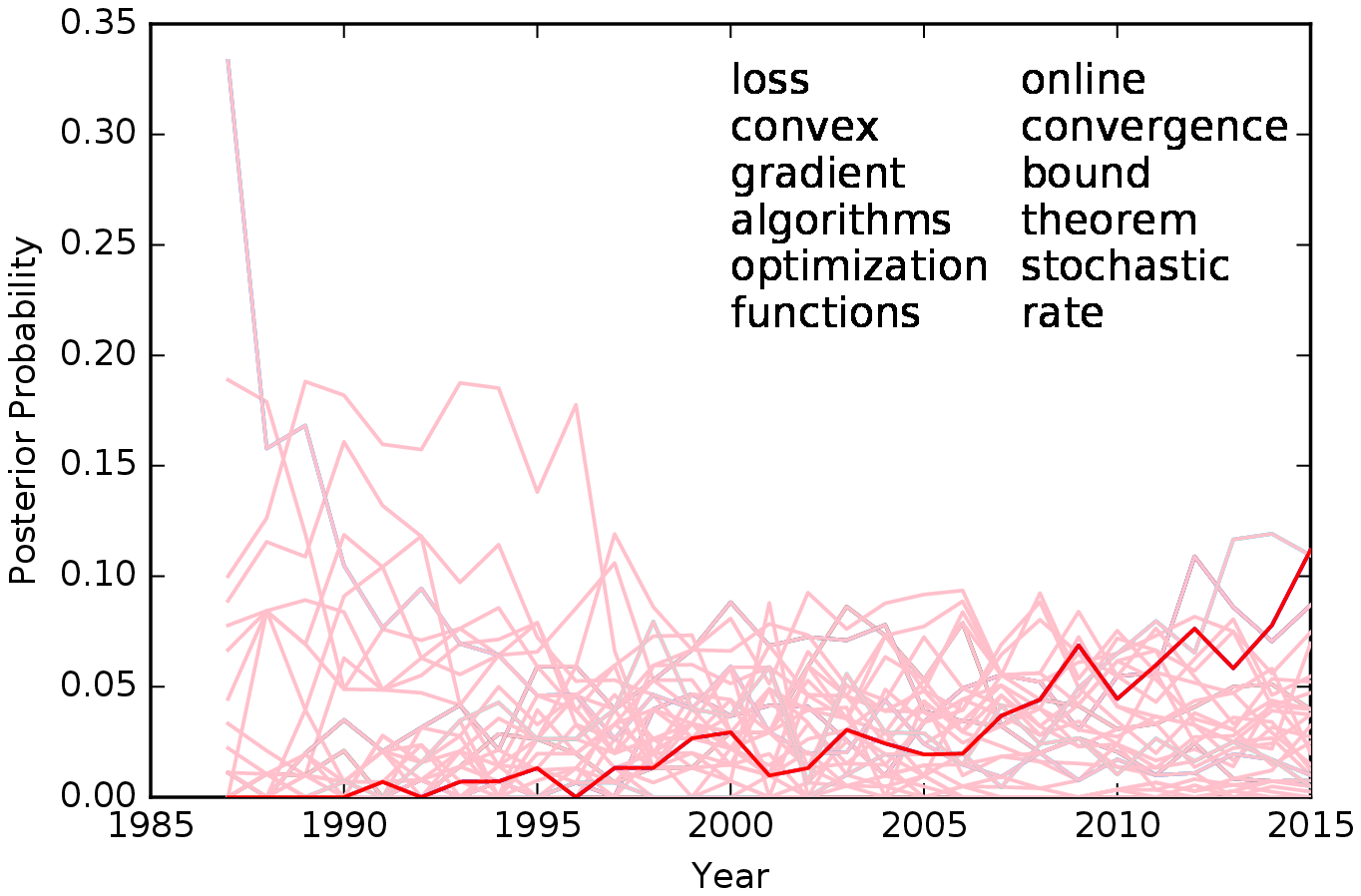}
     }
     \hfill
     \subfloat[Neural Networks\label{cluster19}]{%
       \includegraphics[width=0.55\textwidth]{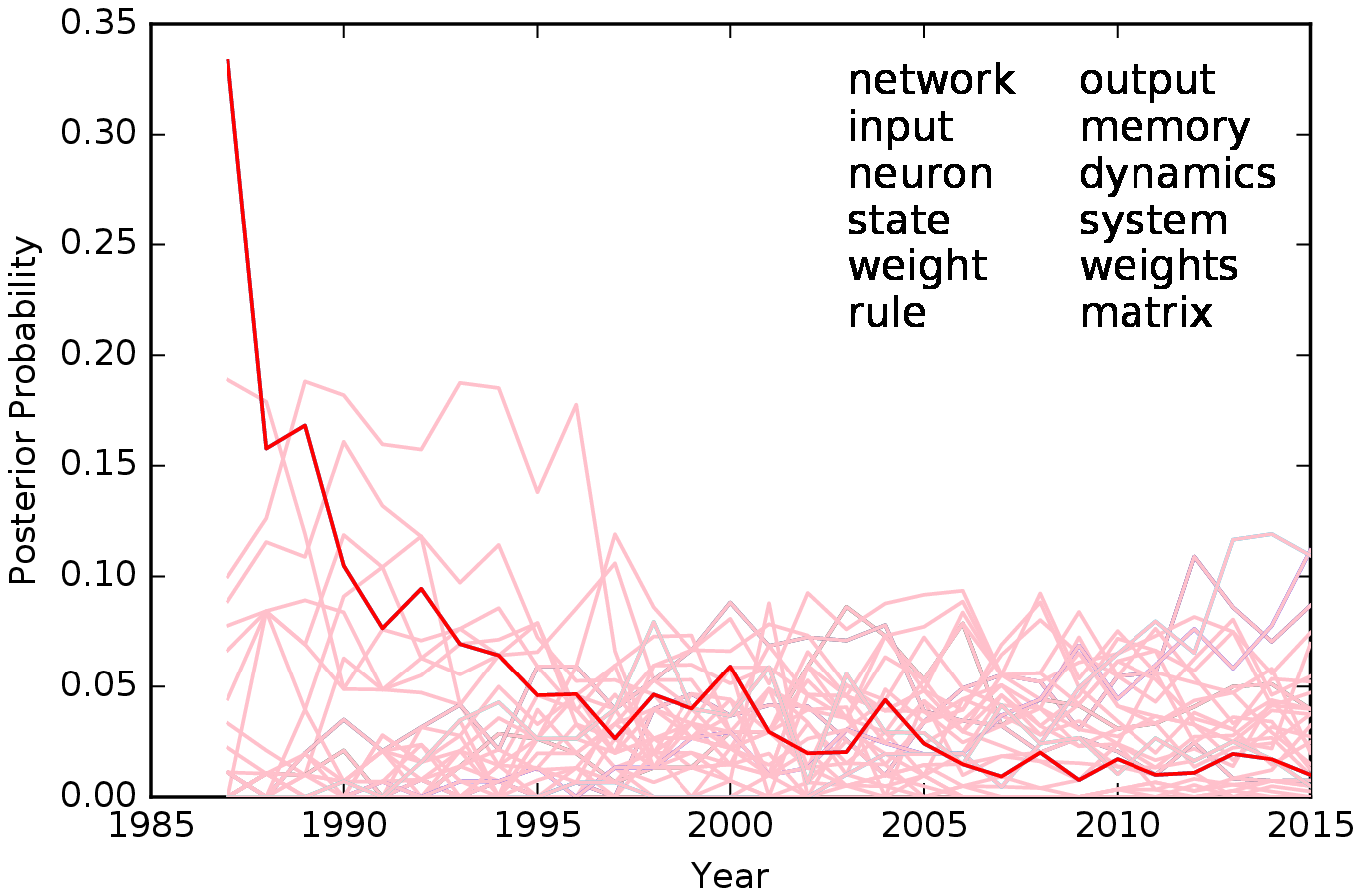}
     }
     \hfill
     \subfloat[Sparsity\label{cluster24}]{%
       \includegraphics[width=0.55\textwidth]{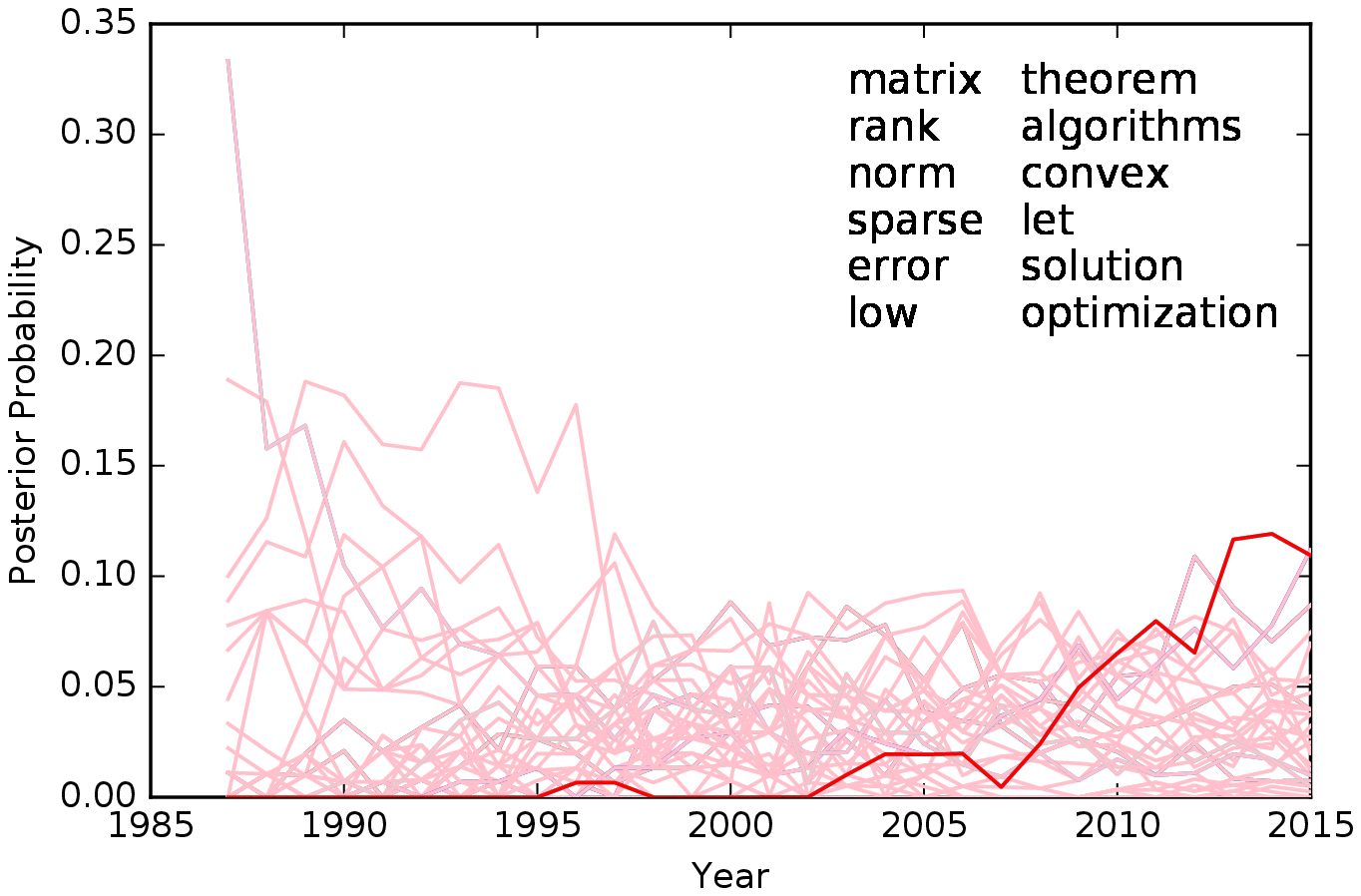}
     }
     
     \caption{Comparative evolution of some representative NIPS topics from 1987 to 2015 with the 12 most likely words. The red line corresponds to the labeled topic and the transparent one to the other topics.}
     \label{topicEvolution}
\end{figure}

\clearpage

\bibliography{BiblioPenalizedLL}
\bibliographystyle{apalike}

\newpage
\appendix
\section{Supplementary material}


\subsection{Preliminary}
In this section, Theorems \ref{thmSingleModelParticular} and \ref{thmModelSelectionParticular} are proved in a more general setting where some assumptions are relaxed. Notation is generalized so that each model is indexed by $m$ rather than $K$. Thus, a parameter set of a model is denoted by $\Theta_m$, the corresponding model by $\mathcal{S}_m$ and $\mathcal{F}_m$ is the associated $K$-uplet of mixture distributions. An element $f\in \mathcal{F}_m$ can also be described as a family $(f_1,\ldots,f_K)$ of $K$ categorical probability distributions on $B$ categories.
Denoting by $s_l$ the true density of observation $l$, we assume the structure of the models to be such that:
\begin{assumption}[\textbf{Model structure}]
\label{GhypSurModele}
Any density functions $f, f' \in \mathcal{F}_{m}$ taken in a model satisfy $e^{-\tau_n} \leq f/f' \leq e^{\tau_n}$ and $e^{-\tau_n} \leq f/s_l$, $l\in \{1,\ldots,L\}, s_l$ being the true density of observation $l$.
\end{assumption}
This assumption implies Assumption \ref{hypSurModele} as formulated inside the article. Moreover, the second part of the assumption is made on the model density rather than on the true one which is unknown.
Our main result adresses an oracle inequality that links some averaged Kullback-Leibler divergence \textbf{KL} of the selected estimator to the averaged \textbf{KL} between the true densities and every model within the collection. This inequality leads to some penalty function which heavily relies on a notion of bracketing entropy. A bracket $[f_k^-, f_k^+]$ is a pair of real-valued functions such that for all $x\in \mathcal{X}$, $f_k^-(x) \leq f_k^+(x)$. A density function $f_k$ is said to belong to the bracket $[f_k^-, f_k^+]$ if $f_k^-(x) \leq f_k(x)\leq f_k^+(x)$ for all $x\in \mathcal{X}$. Take ${f}^{-}, {f}^{+}  \in\mathcal{F}_{m}$ such that for all $k= 1,\ldots, K$, $[{f^-_k}, {f^+_k}]$ forms a bracket. Fix $(k_1,\ldots,k_L)$ a cluster assignment of the observations. We define the width of such a family of brackets as follows:
\begin{equation*}
\mathbf{a}({f}^{-}, {f}^{+}) = \frac{1}{n} \sum_{l=1}^{L}n_lE_{s_l}\left[  \mid \log\left( \frac{{f^-_{k_l}}}{{f^+_{k_l}}}\right)\mid^2 \right].
\end{equation*}
The bracketing entropy $H_{[.], \mathbf{a}}(\delta, \mathcal{F}_m)$ of a set of functions $\mathcal{F}_m$ is defined as the logarithm of the minimum number of brackets of width smaller than $\delta$ such that every function of $\mathcal{F}_m$ belongs to one of these brackets. The model complexity that will be considered rather depends on the localized models, which in our framework can be written as:
\begin{equation*}
\mathcal{F}_{m}(\tilde{f}, \sigma) = \left\{f \in \mathcal{F}_{m} \mid \mathbf{a}(\tilde{f}, f) \leq \sigma^2\right\}.
\end{equation*} 
We also impose a structural assumption on the localized models:
\begin{assumption}
\label{hypPhi}
There exists a real-valued function $\phi_m$ on $[0, +\infty)$ such that $\phi_m$ is non-decreasing, the mapping $\delta \mapsto \frac{1}{\delta}\phi_m(\delta)$ is non-increasing on $(0, +\infty)$
and for every $\sigma\geq 0$ and every $f \in \mathcal{F}_m$,
\begin{equation*}
\int_{0}^{\sigma}H_{[.], \mathbf{a}}(\delta, \mathcal{F}_m(f, \sigma))^{1/2}d\delta \leq \phi_m(\sigma).
\end{equation*}
\end{assumption}
The use of divergence \textbf{a} instead of \textbf{KL} has some benefit, as it allows to take advantage of the metric entropy of the models and deduce the bracketing entropy. It is smaller, up to a certain constant, than the \textbf{KL} divergence:
\begin{proposition}
\label{meynet}
Assume that Assumption \ref{GhypSurModele} is satisfied. Then
\begin{equation*}
\frac{1}{n} \sum_{l=1}^{L}n_lE_{s_l}\left[    \mid \log\left( \frac{f_{k_l}}{s_l}\right)\mid^2  \right] \leq \frac{\tau_n^2}{e^{-\tau_n} + \tau_n -1}\frac{1}{n}\sum_{l= 1}^{L}n_l \mathbf{KL}(f_{k_l}, s_l).
\end{equation*}
\end{proposition}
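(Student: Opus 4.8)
\subsection*{Proof strategy for Proposition~\ref{meynet}}

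The plan is to reduce the weighted inequality to a single deterministic scalar estimate and then integrate. First I would rewrite the right-hand side. Since $\int f_{k_l}\,d\lambda=\int s_l\,d\lambda=1$, one has $E_{s_l}\!\left[f_{k_l}/s_l-1\right]=0$, hence
\[
\mathbf{KL}(s_l,f_{k_l})=E_{s_l}\!\left[\log\frac{s_l}{f_{k_l}}\right]=E_{s_l}\!\left[\frac{f_{k_l}}{s_l}-1-\log\frac{f_{k_l}}{s_l}\right].
\]
Thus both sides of the proposition are, up to the weights $n_l/n$, expectations under $s_l$ of functions of the single random variable $x:=\log\dfrac{f_{k_l}(\cdot)}{s_l(\cdot)}$: the left integrand is $x^2$ and the right one (before the constant) is $e^x-x-1$. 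By the first part of Assumption~\ref{GhypSurModele}, $f_{k_l}/s_l\ge e^{-\tau_n}$ on $\mathcal{X}$, so $x\ge-\tau_n$ pointwise, and it is enough to establish the numerical inequality
\[
x^2\le\frac{\tau_n^2}{e^{-\tau_n}+\tau_n-1}\,\bigl(e^x-x-1\bigr)\qquad\text{for every }x\ge-\tau_n.
\]

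To prove this scalar bound I would introduce $q(x)=(e^x-x-1)/x^2$, extended by continuity with $q(0)=\tfrac12$; the inequality is precisely $q(x)\ge q(-\tau_n)$ for $x\ge-\tau_n$, so it suffices to show that $q$ is nondecreasing on $\mathbb{R}$. This can be read off the power series $q(x)=\sum_{j\ge0}x^{j}/(j+2)!$, or obtained more directly by differentiation: $q'(x)=N(x)/x^{3}$ with $N(x)=e^{x}(x-2)+x+2$, and since $N(0)=N'(0)=0$ and $N''(x)=xe^{x}$, a short sign analysis (integrating $N''$ twice from $0$) shows that $N$ and $x^{3}$ have the same sign on $\mathbb{R}\setminus\{0\}$, whence $q'\ge0$. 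Finally $e^{-\tau_n}+\tau_n-1>0$ by strict convexity of $\exp$, so $q(-\tau_n)^{-1}=\tau_n^2/(e^{-\tau_n}+\tau_n-1)$ is exactly the constant appearing in the statement.

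To conclude, I would apply the scalar bound pointwise with $x=\log\frac{f_{k_l}(y)}{s_l(y)}$, integrate in $y$ against $s_l\,d\lambda$, and use the identity above to get $E_{s_l}\!\bigl[|\log(f_{k_l}/s_l)|^2\bigr]\le\frac{\tau_n^2}{e^{-\tau_n}+\tau_n-1}\,\mathbf{KL}(s_l,f_{k_l})$ for each $l$; taking the $n_l/n$-weighted sum over $l=1,\dots,L$ gives the claim. The only genuine difficulty is the monotonicity of $q$, which, although elementary, needs the careful second-derivative bookkeeping above; I would also note that it is this orientation of $\mathbf{KL}$ (true density as first argument, consistent with Theorems~\ref{thmSingleModelParticular}--\ref{thmModelSelectionParticular}) that lets the normalizations $\int f_{k_l}=\int s_l=1$ collapse the right-hand side to the clean form $e^x-x-1$, which is what yields the sharp constant displayed.
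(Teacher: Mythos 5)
Your argument is correct, but it takes a genuinely different route from the paper: the paper's proof is a one-line citation of Meynet's lemma (applied with $dP=f_{k_l}d\mu$, $dQ=s_ld\mu$), whereas you reprove the underlying scalar inequality from scratch via the monotonicity of $q(x)=(e^x-x-1)/x^2$ and then integrate. Your sign analysis of $N(x)=e^x(x-2)+x+2$ is right ($N(0)=N'(0)=0$, $N''(x)=xe^x$, hence $N'\ge 0$ everywhere and $N$ has the sign of $x$), and the identity $E_{s_l}[e^x-x-1]=\mathbf{KL}(s_l,f_{k_l})$ for $x=\log(f_{k_l}/s_l)$ is exactly what makes the constant sharp. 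What your self-contained route buys is twofold. First, it uses precisely the hypothesis that Assumption~\ref{GhypSurModele} actually provides, namely the one-sided bound $f_{k_l}/s_l\ge e^{-\tau_n}$ (i.e.\ $x\ge-\tau_n$); the lemma as quoted in the paper instead requires $\log\lVert f_{k_l}/s_l\rVert_\infty\le\tau_n$, an \emph{upper} bound on the ratio that the assumption does not supply, so your version is better matched to the stated hypotheses. Second, be aware of an orientation discrepancy: what you prove is $E_{s_l}[|\log(f_{k_l}/s_l)|^2]\le\frac{\tau_n^2}{e^{-\tau_n}+\tau_n-1}\mathbf{KL}(s_l,f_{k_l})$, with the true density as the first argument of $\mathbf{KL}$, whereas the displayed statement has $\mathbf{KL}(f_{k_l},s_l)$. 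These are different quantities in general, so strictly speaking you have not proved the literal display; however, the version you prove is the one actually invoked later in the proof of the single-model theorem (where the bound is converted into $\epsilon_{pen}\sum_l n_l\mathbf{KL}(s_l,\widehat f_{\widehat k_l})/n$), and the paper's own citation-based proof has the complementary mismatch (it yields $E_{f_{k_l}}$ rather than $E_{s_l}$ on the left). So your proof is sound and, arguably, establishes the statement the paper needs rather than the one it prints.
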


\begin{proof}
The proof can be deduced from Meynet's result \citep{Mey2012} that states the following.
\begin{lemma}
Let $P$ and $Q$ be two probability measures with $P\ll Q$. Assume there exists $\tau>0$ such that $\log(  \lVert \frac{dP}{dQ} \rVert _\infty)\leq \tau$. Then
\begin{equation*}
\int \left( \log \frac{dP}{dQ} \right)^2 dP \leq \frac{\tau^2}{e^{-\tau} + \tau -1}\mathbf{KL}(P,Q).
\end{equation*}
\end{lemma}
By taking $\tau= \tau_n$, $dP = f_{k_l}d\mu$ and $dQ = s_ld\mu$, one can deduce the result.
\end{proof}

In order to avoid measurability issues, we also impose a separability condition on the models: 
\begin{assumption}[\textbf{Separability}]
\label{hypSep}
There exists a countable subset $\mathcal{F}'_m$ of $\mathcal{F}_m$ and a set $\mathcal{X}'_m$ of measure $\lambda(\mathcal{X} \setminus  \mathcal{X}'_m) = 0$ such that for every $f \in \mathcal{F}_m$, there exists a sequence $(f_{j})_{j\geq 1}$ of elements of $\mathcal{F}'_{m}$ such that for every $x\in \mathcal{X}'_m$, $\log(f_{j}(x)) $ goes to $\log(f(x))$ as $j$ goes to infinity.
\end{assumption}

\subsection{Single Model Risk Bound}

\begin{theorem}
\label{thmSingleModel}
Let $(x_1,\ldots,x_L)$ be $L$ observations of independent random vectors $(X_1,\ldots,X_L)$ where each $X_l$ consists of $n_l$ independent and identically distributed instances of a multinomial vector that has $s_l$ as a true density with respect to some known positive measure. Assume $\mathcal{S}_m $ is a model for which Assumptions \ref{GhypSurModele}, \ref{hypPhi} and \ref{hypSep}  hold. Let $(\widehat{\pi}, \widehat{f}) \in \Theta_{m}$ be an $\eta$ -log-likelihood minimizer in $\mathcal{S}_m$:
\begin{equation*}
\gamma_n(\widehat{\pi}, \widehat{f})     \leq   \inf_{(\pi ,f )\in \Theta_{m}}    \gamma_n(\pi, f)   + \eta,  
\end{equation*}
and define its corresponding cluster assignment for each observation $x_l$:
\begin{equation*}
\widehat{k_l} = \argmax_{k \in \Iintv{1,K}} \left\{ \widehat{\pi_k} \left(\prod_{i = 1}^{n_l} \widehat{f_k}(x_l^i) \right) \right\}.
\end{equation*}
Then, for any $C_1>1$, there exists a constant $C_2$ depending only on $C_1$ such that, for $\mathfrak{D}_m = n\sigma_m^2$ with $\sigma_m$ the unique root of $\frac{1}{\sigma}\phi_m({\sigma}) = n^{1/2}\sigma$, the estimate $(\widehat{\pi}, \widehat{f})$ satisfies
\begin{equation*}
\begin{split}
E\left[\sum_{l = 1}^{L} \frac{n_l}{n}\mathbf{KL}(s_l, \widehat{f_{\widehat{k_l}}}) \right]   &\leq  C_1  \left\{  \inf_{\substack{f \in \mathcal{F}_m\\(k_l)_l \in \Iintv{1,K}^L}} \left(\sum_{l=1}^{L}\frac{n_l}{n} \mathbf{KL}(s_l, f_{k_l})\right)       + \frac{ (2+\kappa_0) L\log K + \kappa_0\mathfrak{D}_m}{n} + \frac{ \eta}{n} \right\}\\
&\quad+ \frac{C_2}{n} .
\end{split}
\end{equation*}
\end{theorem}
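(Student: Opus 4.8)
The plan is to follow Massart's general model-selection machinery, adapted to the conditional-density / fixed-design setting of \cite{CohLep2012}, but working with the averaged Kullback--Leibler divergence $\sum_l \frac{n_l}{n}\mathbf{KL}(s_l,\cdot)$ throughout. First I would set up the right reference point: since we do not assume the mixture model is true, fix an arbitrary $f\in\mathcal{F}_m$ together with an arbitrary cluster assignment $(k_l)_l$, so that $(f_{k_l})_l$ is a candidate ``best'' density vector, and let $P_{(m,\theta)}$ be the corresponding mixture likelihood. The key object is the centered empirical process $\bar\gamma_n(t) = \gamma_n(t) - E[\gamma_n(t)]$; by the definition of the $\eta$-minimizer, $\gamma_n(\widehat\pi,\widehat f) \le \gamma_n(\pi,f) + \eta$, and subtracting expectations rewrites this as a bound on $\sum_l \frac{n_l}{n}\mathbf{KL}(s_l,\widehat f_{\widehat k_l}) - \sum_l \frac{n_l}{n}\mathbf{KL}(s_l, f_{k_l})$ in terms of an increment of $\bar\gamma_n$ evaluated between the estimator and the reference point, plus $\eta/n$. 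Here is where the MAP assignment step must be handled carefully: the loss on the left involves $\widehat f_{\widehat k_l}$, where $\widehat k_l$ is itself argmax-defined, so I would bound the per-observation risk of the MAP-selected component by the risk one would get from the oracle assignment for $\widehat f$, absorbing the discrepancy --- this is exactly what produces the $L\log K$ term, since there are $K^L$ possible assignments and a union bound / chaining over assignments costs $L\log K$.

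Next I would control the fluctuations of $\bar\gamma_n$. Because of Assumption~\ref{GhypSurModele} (the two-sided bound on log-ratios of densities by $\tau_n$), the log-ratio increments $\log(P_{(m,\theta)}/P_{(m,\theta')})$ are bounded, so one gets genuine Bernstein-type control of moments of $\bar\gamma_n(t)-\bar\gamma_n(t')$ in terms of the ``variance proxy'' $\mathbf{a}$, which by Proposition~\ref{meynet} is itself dominated by the averaged $\mathbf{KL}$ up to the factor $\tau_n^2/(e^{-\tau_n}+\tau_n-1)\sim \tau_n$. This is the mechanism that keeps $\mathbf{KL}$ --- rather than a weaker divergence like Hellinger or Jensen--$\mathbf{KL}$ --- on the left-hand side: the boundedness lets us relate the process increments back to $\mathbf{KL}$ directly instead of to $\int (\sqrt{s t}-t)$-type quantities. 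I would then invoke a maximal-inequality / peeling argument on the slices $\{t : \mathbf{a}(\tilde f, t)\le \sigma^2\}$: Assumption~\ref{hypPhi} gives the Dudley-type entropy integral bound $\phi_m(\sigma)$, and the fixed point $\sigma_m$ of $\phi_m(\sigma)/\sigma = n^{1/2}\sigma$ yields the model-complexity quantity $\mathfrak{D}_m = n\sigma_m^2$. Combining the maximal inequality with Talagrand's / Bousquet's concentration inequality for the supremum of the empirical process over each slice, and summing over a geometric grid of radii, produces a deviation bound: with probability at least $1-e^{-x}$, the centered increment is controlled by $\theta\cdot(\text{current }\mathbf{KL}) + (\text{const})\cdot(\mathfrak{D}_m + L\log K + x)/n$ for any $\theta\in(0,1)$.

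Then I would combine the two ingredients. Plugging the deviation bound into the basic inequality from the first step, choosing the constant $\theta$ small relative to $1/C_1$ so that the $\theta\cdot\mathbf{KL}(s_l,\widehat f_{\widehat k_l})$ term can be absorbed into the left-hand side, and rearranging, gives a bound of the form $(1-\theta')\sum_l \frac{n_l}{n}\mathbf{KL}(s_l,\widehat f_{\widehat k_l}) \le (1+\theta'')\sum_l \frac{n_l}{n}\mathbf{KL}(s_l,f_{k_l}) + \kappa_0(\mathfrak{D}_m + L\log K + x)/n + \eta/n$ on an event of probability $\ge 1-e^{-x}$. Since $f$ and $(k_l)_l$ were arbitrary, I take the infimum over them. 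Finally, integrating the tail in $x$ (the standard ``$\int_0^\infty \Proba(\cdot > x)\,dx$'' step) converts the in-probability statement into the stated bound in expectation, with the residual $\int e^{-x}\,dx$-type contribution collected into $C_2/n$, and the factor $(1+\theta'')/(1-\theta')$ bounded by the prescribed $C_1>1$ for a suitable choice of $\theta',\theta''$ depending only on $C_1$. The main obstacle I anticipate is the MAP-assignment coupling in the first step: one must show that replacing the data-dependent $\widehat k_l$ by an oracle assignment only costs the additive $L\log K$, which requires treating the collection of assignments as an extra combinatorial layer in the chaining/union bound while making sure the variance terms $\mathbf{a}$ stay compatible across assignments --- this is the genuinely new piece relative to \cite{CohLep2012}, and the $K\log 2$ correction flagged for Theorem~\ref{thmModelSelectionParticular} is a downstream symptom of exactly this bookkeeping.
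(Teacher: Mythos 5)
Your proposal follows essentially the same route as the paper's proof of Theorem~\ref{thmSingleModel}: the basic inequality from the $\eta$-minimizer, Bernstein-type moment control enabled by Assumption~\ref{GhypSurModele} together with Proposition~\ref{meynet} to keep $\mathbf{KL}$ (rather than a weaker divergence) on the left, a peeling/entropy-integral argument with the fixed point $\sigma_m$ giving $\mathfrak{D}_m=n\sigma_m^2$, a union bound over the $K^L$ cluster assignments, absorption of the variance proxy into the left-hand side, and integration of the tail in $x$. The only bookkeeping difference is that in the paper the $L\log K$ contributions arise from three distinct places --- the deterministic comparison $\sum_k \widehat\pi_k\prod_i\widehat f_k(x_l^i)\le K\,\widehat\pi_{\widehat k_l}\prod_i\widehat f_{\widehat k_l}(x_l^i)$ that converts the mixture likelihood into the MAP-component likelihood, the bound $-\log\pi_{k_l}\le\log K$ obtained by plugging in the uniform prior, and the $K^L$ union bound in the deviation control (yielding the $(2+\kappa_0)L\log K$ in the statement) --- whereas you attribute the cost solely to the last mechanism, but this does not change the architecture of the argument.
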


\begin{proof}
By definition of $\widehat{k}_l$ and using the fact that all mixture probabilities $\pi_k$ are less than 1, we can write:
\begin{equation}
\begin{split}
\label{majoreLL}
\eta + \inf_{(\pi ,f )\in \Theta_{m}}    \gamma_n(\pi, f)  &\geq     \sum_{l = 1}^{L}-\log    \left(   K \widehat{\pi}_{\widehat{k}_l} \left[\prod_{i = 1}^{n_l} \left( \frac{\widehat{f}_{\widehat{k}_l}}{s_l}\right) (x_l^i) \right]   \right)   \\
&\geq -L\log (K) - \sum_{l=1}^{L}\sum_{i=1}^{n_l} \log     \left(         \left(\frac{\widehat{f}_{\widehat{k}_l}}{s_l}\right)      (x_l^i)   \right).
 \end{split}
\end{equation}
On the other hand, for any cluster assignment $(k_1,\ldots,k_L)$ of the observations we have
\begin{equation*}
\begin{split}
 \gamma_n(\pi, f)
 &\leq \sum_{l=1}^{L} -\log(\pi_{k_l}) +  \sum_{l=1}^{L} - \log   \left(  \prod_{i = 1}^{n_l} \left( \frac{f_{k_l}}{s_l}\right)(x_l^i)    \right)   
 \end{split}
\end{equation*}
because $-\log$ is non-increasing. Therefore, by inequality (\ref{majoreLL}),
\begin{equation*}
\begin{split}
\inf_{(\pi ,f )\in \Theta_{m}}  \gamma_n(\pi, f) &\leq \inf_{(k_l)_l\in \Iintv{1,K}^L}\left[\inf_{(\pi ,f )\in \Theta_{m}} \left\{\sum_{l=1}^{L} -\log(\pi_{k_l}) +  \sum_{l=1}^{L}\sum_{i = 1}^{n_l}  - \log   \left( \left( \frac{f_{k_l}}{s_l}\right)(x_l^i)  \right)  \right\}  \right],
 \end{split}
\end{equation*}
which leads to 
\begin{equation}
\label{tobeCentralized}
\begin{split}
\sum_{l=1}^{L}\sum_{i=1}^{n_l}  - \log     \left(         \left(\frac{\widehat{f}_{\widehat{k}_l}}{s_l}\right)      (x_l^i)   \right)     &\leq         L\log(K)  + \eta \\
 &\quad+  \inf_{(k_l)_l\in \Iintv{1,K}^L}\left[\inf_{(\pi ,f )\in \Theta_{m}} \left\{\sum_{l=1}^{L} -\log(\pi_{k_l})\right.\right. \\
 &\quad+ \left.\left. \sum_{l=1}^{L}\sum_{i = 1}^{n_l}  - \log   \left( \left( \frac{f_{k_l}}{s_l}\right)(x_l^i)  \right)  \right\}  \right].
\end{split}
\end{equation}
We define by $\bar{f} = (\bar{f}_1,\ldots, \bar{f}_K)$ a family of densities that satisfy for all $\delta >0$ and all cluster assignment $(k_1,\ldots, k_L)$:
\begin{equation}
\label{fbarIneq}
\sum_{l = 1}^{L} n_l \mathbf{KL}(s_l, \bar{f}_{k_l}) \leq \inf_{f \in \mathcal{F}_m}\sum_{l=1}^{L} n_l \mathbf{KL}(s_l, f_{k_l}) +\delta.
\end{equation}
For all $l \in \{1,\ldots, L\}$ and any density $f_{k_l}$ at $l$, we denote the empirical Kullback-Leibler divergence $\mathbf{kl}(f_{k_l})$ by:
\begin{equation*}
\mathbf{kl}(f_{k_l}) = \sum_{i = 1}^{n_l}-\log    \left(\frac{f_{k_l}}{s_l}\right).
\end{equation*} 
We have $E\left[\sum_{l = 1}^{L} \mathbf{kl}(f_{k_l})(X_l)\right] = \sum_{l = 1}^{L} n_l \mathbf{KL}(s_l, f_{k_l})$. Eventually, define:
\begin{equation}
\label{defCentralized}
\nu_{l}(f_{k_l}) = \mathbf{kl}(f_{k_l})(x_l) - E_{s_l}\left[\mathbf{kl}(f_{k_l})(X_l)\right]
\end{equation} 
the centered version of the empirical \textbf{KL}-divergence. Thanks to equation \eqref{tobeCentralized} and using the definition of $\bar{f}$ in (\ref{fbarIneq}), we can write:
\begin{equation*}
\begin{split}
\sum_{l = 1}^{L}\nu_l(\widehat{f}_{\widehat{k_l}}) + \sum_{l = 1}^{L} E_{s_l} \left[\mathbf{kl}(\widehat{f}_{\widehat{k_l}})(X_l)\right]      &\leq      L\log K +  \inf_{(k_l)_l\in \Iintv{1,K}^L}   \left\{ \inf_{\pi \in \mathbb{S}_{k-1}}   \left( \sum_{l=1}^{L} -\log(\pi_{k_l}) \right)\right.\\
&\quad+ \left.  \sum_{l=1}^{L}\left(  \sum_{i = 1}^{n_l} - \log   \left( \left( \frac{\bar{f}_{k_l}}{s_l}\right)(x_l^i)    \right) \right.  -   E_{s_l}\left[\sum_{i = 1}^{n_l} \log   \left( \left(\frac{\bar{f}_{k_l}}{s_l}\right) (X_l^i)   \right)\right]    \right)\\    
&\quad+ \left. \sum_{l=1}^{L} E_{s_l}\left[\sum_{i = 1}^{n_l} \log   \left( \left(\frac{\bar{f}_{k_l}}{s_l}\right)(X_l^i)   \right)\right] \right\}+ \eta \\
&\leq  L\log K +    \inf_{(k_l)_l\in \Iintv{1,K}^L}   \left\{ \inf_{\pi \in \mathbb{S}_{k-1}}   \left( \sum_{l=1}^{L} -\log(\pi_{k_l}) \right)+ \sum_{l = 1}^{L} \nu_l(\bar{f}_{k_l}) \right. \\
&\quad \left.+ \sum_{l = 1}^{L} n_l \mathbf{KL}(s_l, \bar{f}_{k_l})  \right\}+ \eta
\end{split}
\end{equation*}
which can be rewritten as:
\begin{equation}
\label{majorationKL}
\begin{split}
\sum_{l = 1}^{L} n_l \mathbf{KL}(s_l, \widehat{f}_{\widehat{k_l}})        &\leq      L\log K +   \inf_{(k_l)_l\in \Iintv{1,K}^L} \left\{  \sum_{l=1}^{L} -\log(\frac{1}{K}) +\sum_{l = 1}^{L} \nu_l(\bar{f}_{k_l})\right.\\
&\quad \left. +  \inf_{f \in \mathcal{F}_m} \left(\sum_{l=1}^{L} n_l \mathbf{KL}(s_l, f_{k_l})\right) \right\} 
-\sum_{l = 1}^{L}\nu_l(\widehat{f}_{\widehat{k_l}})  +\delta  + \eta\\
&\leq 2L\log K +  \inf_{(k_l)_l\in \Iintv{1,K}^L}\left\{  \sum_{l = 1}^{L} \nu_l(\bar{f}_{k_l}) +  \inf_{f \in \mathcal{F}_m} \left(\sum_{l=1}^{L} n_l \mathbf{KL}(s_l, f_{k_l})\right) \right\} \\
&\quad-\sum_{l = 1}^{L}\nu_l(\widehat{f}_{\widehat{k_l}})  +\delta  + \eta,
\end{split}
\end{equation}
where the infimum over the simplex is upper-bounded by a uniform distribution. It remains to get an upper bound of the deviation $-\sum_{l = 1}^{L}\nu_l(\widehat{f}_{\widehat{k_l}})$, which is based on the following lemmas.
First define for all random variable $Z$ and any event $A$ of positive probability, $E^A[Z]= \frac{E[Z\mathbb{1}_A]}{\text{pr}(A)}$. We have the following results.

\begin{lemma}
\label{markovIneq}
Let $Z$ be a random variable and $\Psi$ a non-decreasing function such that for all measurable event $A$ satisfying $\text{pr}(A)>0$, $E^A[Z]\leq \Psi\left(  \log\left(    \frac{1}{\text{pr}(A)}\right)   \right)$. Then, for all $x\geq 0$, $\text{pr}(Z>\Psi(x))\leq e^{-x}$.
\end{lemma}

\begin{lemma}
\label{lemmaConc}
Let $(k_1,\ldots,k_L) \in \Iintv{1,K}^L$ be a group assignment for each observation $x_l$. Then, there exist three absolute constants $\kappa'_0 >4$, $\kappa'_1$ and $\kappa'_2$ such that, under Assumption \ref{hypPhi}, for all $m\in \mathcal{M}$, for all $y_m>\sigma_m$ and every measurable event $A$ such that $\text{pr}(A)>0$,
\begin{equation*}
\begin{split}
E^A \left[  \sup_{f \in \mathcal{F}_m}  \frac{1}{n}\sum_{l = 1}^{L}    \frac{-\nu_l(f_{k_l}) }{y_m^2  +      \kappa'_0 \frac{1}{n}\sum_{l=1}^{L}n_l  E   \left[   \mid    \log\left(\frac{f_{k_l}}{s_{l}}\right) \mid  ^2 \right]  }   \right] 
&\leq \kappa'_1\frac{\sigma_m}{y_m} +  \frac{\kappa'_2} {(ny_m^2)^{1/2}}    \log \left( \frac{1}{\Proba(A)}  \right)^{1/2}\\
&\quad + \frac{9\tau_n}{ny_m^2}    \log \left( \frac{1}{\Proba(A)} \right).
\end{split}
\end{equation*}
\end{lemma}

Then, for all $\lambda >0$ we can derive that:
\begin{equation*}
\begin{split}
E^A \left[ \sup_{(k_l)_l\in \Iintv{1,K}^L}  \sup_{f \in \mathcal{F}_m} \exp\left(      \frac{ \lambda }{n}\sum_{l = 1}^{L}    \frac{-\nu_l(f_{k_l}) }{y_m^2  +        \kappa'_0 \frac{1}{n}\sum_{l=1}^{L}n_l  E   \left[   \mid    \log\left(\frac{f_{k_l}}{s_{l}}\right) \mid  ^2 \right]   }      \right)\right] \\
= E^A \left[ \sup_{(k_l)_l\in \Iintv{1,K}^L}   \exp\left(  \sup_{f \in \mathcal{F}_m}    \frac{ \lambda }{n}\sum_{l = 1}^{L}    \frac{-\nu_l(f_{k_l}) }{y_m^2  +        \kappa'_0 \frac{1}{n}\sum_{l=1}^{L}n_l  E   \left[   \mid    \log\left(\frac{f_{k_l}}{s_{l}}\right) \mid  ^2 \right]   }      \right)\right] \\
\leq E^A \left[ \sum_{(k_l)_l\in \Iintv{1,K}^L}  \exp\left(  \sup_{f \in \mathcal{F}_m}    \frac{ \lambda }{n}\sum_{l = 1}^{L}    \frac{-\nu_l(f_{k_l}) }{y_m^2  +        \kappa'_0 \frac{1}{n}\sum_{l=1}^{L}n_l  E   \left[   \mid    \log\left(\frac{f_{k_l}}{s_{l}}\right) \mid  ^2 \right]   }      \right)\right]  \\
\end{split}
\end{equation*}
Therefore, by Lemmas \ref{lemmaConc} and \ref{markovIneq}, for all $x>0$, except on a set of probability less than $e^{-x}$,
\begin{equation*}
\begin{split}
&\sum_{(k_l)_l\in \Iintv{1,K}^L}   \exp\left(  \sup_{f \in \mathcal{F}_m}    \frac{ \lambda }{n}\sum_{l = 1}^{L}    \frac{-\nu_l(f_{k_l}) }{y_m^2  +        \kappa'_0 \frac{1}{n}\sum_{l=1}^{L}n_l  E   \left[   \mid    \log\left(\frac{f_{k_l}}{s_{l}}\right) \mid  ^2 \right]   }      \right)\\ 
\leq &\sum_{(k_l)_l\in \Iintv{1,K}^L}  \exp \left(     \frac{\lambda\kappa'_1\sigma_m}{y_m} +   \frac{\lambda  \kappa'_2} {(ny_m^2)^{1/2}}       x^{1/2} +\lambda\left( \frac{9\tau_n}{ny_m^2}   \right) x      \right) \\
\leq &K^L  \exp \left(     \frac{\lambda\kappa'_1\sigma_m}{y_m} +   \frac{\lambda  \kappa'_2} {(ny_m^2)^{1/2}}       x^{1/2} +\lambda\left( \frac{9\tau_n}{ny_m^2}   \right) x      \right).
\end{split}
\end{equation*}
A fortiori, we have 
\begin{equation*}
\begin{split}
 \exp\left(      \frac{ \lambda }{n}\sum_{l = 1}^{L}    \frac{-\nu_l(\widehat{f}_{\widehat{k}_l})}{y_m^2  +        \kappa'_0 \frac{1}{n}\sum_{l=1}^{L}n_l E   \left[   \mid    \log\left(\frac{\widehat{f}_{\widehat{k}_l}}{s_{l}}\right) \mid  ^2 \right]   }      \right) 
\leq    K^L\exp \left(     \frac{\lambda\kappa'_1\sigma_m}{y_m} +   \frac{\lambda  \kappa'_2} {(ny_m^2)^{1/2}}       x^{1/2} +\lambda\left( \frac{9\tau_n}{ny_m^2}   \right) x      \right), 
\end{split}
\end{equation*}
and except on a set of probability less than $e^{-x}$, 
\begin{equation*}
\begin{split}
    \frac{1}{n}\sum_{l = 1}^{L}    \frac{-\nu_l(\widehat{f}_{\widehat{k}_l})}{y_m^2  +      \kappa'_0\frac{1}{n}\sum_{l=1}^{L}n_l E   \left[   \mid    \log\left(\frac{\widehat{f}_{\widehat{k}_l}}{s_{l}}\right) \mid  ^2 \right]  }    
\leq  \frac{ L \log(K) } {\lambda} +    \frac{\kappa'_1\sigma_m}{y_m} +   \frac{ \kappa'_2} {(ny_m^2)^{1/2}}       x^{1/2} +\left( \frac{9\tau_n}{ny_m^2}   \right) x .
\end{split}
\end{equation*}
It remains to choose $\lambda$ as $\lambda_m = ny_m^2>0$ and $y_m = \theta \left(\frac{x}{n} + \sigma_m^2 + \frac{L\log(K)}{n}\right)^{1/2}$, with $\theta>1$ to be explicited later on. We can already write
\begin{equation*}
\begin{split}
    \frac{1}{n}\sum_{l = 1}^{L}   -\nu_l(\widehat{f}_{\widehat{k}_l})     
\leq \left(y_m^2  +       \kappa'_0 \frac{1}{n}\sum_{l=1}^{L}n_l E   \left[   \mid    \log\left(\frac{\widehat{f}_{\widehat{k}_l}}{s_{l}}\right) \mid  ^2 \right]   \right) \left(    \frac{9\tau_n+1}{\theta ^2} +\frac{\kappa'_1 + \kappa'_2}{\theta} \right)
\end{split}
\end{equation*}
and obtain an upper bound for $  \frac{1}{n}\sum_{l = 1}^{L}   -\nu_l(\widehat{f}_{\widehat{k}_l})  $.
By using equation \eqref{majorationKL}, 
\begin{equation*}
\begin{split}
\frac{1}{n}\sum_{l = 1}^{L} n_l \mathbf{KL}(s_l, \widehat{f}_{\widehat{k_l}})   &\leq    \frac{2 L\log K}{n} +  \inf_{\Kl} \left( \frac{ 1}{n}\sum_{l = 1}^{L} \nu_l(\bar{f}_{k_l}) +   \inf_{f\in \mathcal{F}_m} \left(\sum_{l=1}^{L} \frac{n_l}{n} \mathbf{KL}(s_l, f_{k_l})\right) \right)   + \frac{\delta  + \eta}{n}   \\
&\quad+ \left(y_m^2   +       \kappa'_0\frac{1}{n}\sum_{l=1}^{L}n_l E   \left[   \mid    \log\left(\frac{\widehat{f}_{\widehat{k}_l}}{s_{l}}\right) \mid  ^2 \right]   \right) \left(    \frac{9\tau_n +1}{\theta ^2}+\frac{\kappa'_1 + \kappa'_2}{\theta} \right) .
\end{split}
\end{equation*}
Now we define $C_{\tau_n} = \frac{e^{-\tau_n}+\tau_n -1}{\tau_n^2}$ and choose $\epsilon_{pen}>0$ such that $  \left( \frac{9\tau_n +1}{\theta_{pen} ^2}+\frac{\kappa'_1 + \kappa'_2}{\theta_{pen}} \right)\kappa_0' = C_{\tau_n} \epsilon_{pen}$. Using Proposition \ref{meynet}, we obtain
\begin{equation*}
\begin{split}
\frac{1}{n}\sum_{l = 1}^{L} n_l \mathbf{KL}(s_l, \widehat{f}_{\widehat{k_l}})    &\leq    \frac{ 2L\log K}{n} +   \inf_{\Kl} \left( \frac{1}{n} \sum_{l = 1}^{L} \nu_l(\bar{f}_{k_l}) +  \inf_{f \in \mathcal{F}_m}\left(\sum_{l=1}^{L} \frac{n_l}{n} \mathbf{KL}(s_l, f_{k_l})\right) \right)  \\ &\quad+ \frac{\delta  + \eta}{n}   
+ \frac{\epsilon_{pen}}{n}\sum_{l = 1}^{L}n_l\mathbf{KL}(s_l, \widehat{f}_{\widehat{k_l}}) + \frac{y_m^2 C_{\tau_n}\epsilon_{pen}}{\kappa_0'}.
\end{split}
\end{equation*}
Therefore, with probability less than $e^{-x}$,
\begin{equation*}
\begin{split}
\frac{1-\epsilon_{pen}}{n}\sum_{l = 1}^{L} n_l \mathbf{KL}(s_l, \widehat{f}_{\widehat{k_l}})-  \frac{2 L\log K}{n}        &-  \inf_{\Kl} \left( \frac{1}{n}\sum_{l = 1}^{L} \nu_l(\bar{f}_{k_l})      
+ \inf_{f\in\mathcal{F}_m} \left(\sum_{l=1}^{L} \frac{n_l}{n} \mathbf{KL}(s_l, f_{k_l})\right)  \right)  \\
&- \frac{\delta  + \eta}{n}   
- \frac{\theta_{pen}^2(\sigma_m^2 + \frac{L\log(K)}{n}) C_{\tau_n}\epsilon_{pen}}{\kappa_0'}
> \frac{x}{n}\theta_{pen}^2\frac{C_{\tau_n}\epsilon_{pen}}{\kappa_0'}.
\end{split}
\end{equation*}
For all $\alpha>0$ and any non negative random variable, we have $E[Z] = \alpha\int_{x\geq 0} \text{pr}(Z > \alpha x)dx$. Furthermore, let $\kappa_0 = \frac{C_{\tau_n}\epsilon_{pen}\theta_{pen}^2}{\kappa_0'}$, we get:
\begin{equation*}
\begin{split}
E\left[\frac{1-\epsilon_{pen}}{n}\sum_{l = 1}^{L} n_l \mathbf{KL}(s_l, \widehat{f}_{\widehat{k_l}})     -  \frac{ 2L\log K}{n} -    \inf_{\Kl} \left( \frac{1}{n}\sum_{l = 1}^{L} \nu_l(\bar{f}_{k_l})         
+ \inf_{f\in\mathcal{F}_m} \left(\sum_{l=1}^{L} \frac{n_l}{n} \mathbf{KL}(s_l, f_{k_l})\right)  \right) \right. \\ 
- \left.\frac{\delta  + \eta}{n}  
- \kappa_0 (\sigma_m^2 + \frac{L\log(K)}{n})\right]
\leq \frac{\kappa_0}{n}.
\end{split}
\end{equation*}
Since $E[\frac{1}{n}\sum_{l = 1}^{L} \nu_l(\bar{f}_{k_l})(X_l)] = 0$ for any cluster assignment $\Kl$ of the observations, we derive:
\begin{equation*}
\begin{split}
E\left[\sum_{l = 1}^{L} \frac{n_l}{n} \mathbf{KL}(s_l, \widehat{f}_{\widehat{k_l}}) \right]     &\leq  \frac{1}{1-\epsilon_{pen}}  \left\{  \inf_{\substack{f \in \mathcal{F}_m\\(k_l)_l \in \Iintv{1,K}^L}} \left(\sum_{l=1}^{L} \frac{n_l}{n} \mathbf{KL}(s_l, f_{k_l})\right)  \right.\\
&\quad \left.   +(2+  \kappa_0 )\frac{  L\log K}{n} + \kappa_0 \sigma_m^2   
+  \frac{\kappa_0}{n} + \frac{\delta  + \eta}{n}   \right\}.
\end{split}
\end{equation*}
Recalling that $\delta$ can be chosen arbitrary small, this leads to:
\begin{equation*}
\begin{split}
E\left[\sum_{l = 1}^{L} \frac{n_l}{n}\mathbf{KL}(s_l, \widehat{f}_{\widehat{k_l}}) \right]     &\leq  \frac{1}{1-\epsilon_{pen}}  \left(  \inf_{\substack{f \in \mathcal{F}_m\\(k_l)_l \in \Iintv{1,K}^L}} \left(\sum_{l=1}^{L} \frac{n_l}{n} \mathbf{KL}(s_l, f_{k_l})\right)  \right. \\
&\quad  \left.   +(2+  \kappa_0 )\frac{  L\log K}{n} + \kappa_0 \sigma_m^2 
+  \frac{\kappa_0}{n} + \frac{ \eta}{n}   \right),
\end{split}
\end{equation*}
which concludes the proof by taking $C_1 = \frac{1}{1-\epsilon_{pen}}$ and $C_2 = \frac{\kappa_0}{1-\epsilon_{pen}}$.
\end{proof}

\subsubsection{Proof of Lemma \ref{markovIneq}}
Let $x\geq 0$. Take $A = \{Z < \Psi(x)\}$. Either $\text{pr}(A) = 0 \leq e^{-x}$ or $\text{pr}(A)>0$ in which case, by assumption,
\begin{equation*}
\begin{split}
\Psi(x) <\frac{E[Z\mathbb{1}_{\{Z <\Psi(x)\}}]}{\text{pr}(Z<\Psi(x))}\leq \Psi \left(\log\left(  \frac{1}{\text{pr}(Z<\Psi(x))}  \right)\right).
\end{split}
\end{equation*}
Since $\Psi$ is non-decreasing, $x<\log\left(  \frac{1}{\text{pr}(Z<\Psi(x))}  \right)$, which leads to the conclusion.

\subsubsection{Proof of Lemma \ref{lemmaConc}}
Consider a class of real-valued and measurable functions defined below. For a fixed family of $K$ functions denoted by $\tilde{f} = (\tilde{f}_1,\ldots,\tilde{f}_K)$: 
\begin{equation*}
\begin{split}
\mathcal{G}(\tilde{f}, \sigma) &= \left\{   \log     \left(  \prod_{i = 1}^{n_l}  \frac{f_{k_l}}{\tilde{f}_{k_l}}  \right)_{ l\in\{1,\ldots,L\} }   \mid f \in \mathcal{F}_m , \mathbf{a}(f, \tilde{f}) \leq \sigma ^2 \right\} \\ 
&= \left\{    \left(  -\mathbf{kl}(f_{k_l}) + \mathbf{kl}(\tilde{f}_{k_l})  \right)_{ l\in\{1,\ldots,L\} }     \mid f \in \mathcal{F}_m , \mathbf{a}(f, \tilde{f})  \leq \sigma ^2  \right\}  
\end{split}
\end{equation*} 
We are focusing on $W(\tilde{f}, \sigma) = \sup_{f\in \mathcal{G}(\tilde{f}, \sigma)} \sum_{l=1}^{L} (-\nu_l(f_{k_l}) + \nu_l(\tilde{f}_{k_l})) $. If $[f_{k_l}^-, f_{k_l}^+]$ is a bracket of size $\gamma$ containing $f_{k_l}$, then 
\begin{equation*}
g_{k_l}^- = \log \left( \frac{f_{k_l}^-}{\tilde{f}_{k_l}} \right)   \leq  \log \left( \frac{f_{k_l}}{\tilde{f}_{k_l}} \right)    \leq \log  \left( \frac{f_{k_l}^+}{\tilde{f}_{k_l}} \right)   =   g_{k_l}^+\text{ and }
g_{k_l}^+ -  g_{k_l}^- =  \log  \left( \frac{f_{k_l}^+}{f_{k_l}^-} \right).
\end{equation*} 
According to Assumption \ref{GhypSurModele}, $\mid \log\left( \frac{ f_{k_l}^+}{f_{k_l}^-}\right) \mid \leq \tau_n$. Thus, for any integer $j\geq 2$:
\begin{equation*}
\frac{1}{n}\sum_{l=1}^{L}\sum_{i=1}^{n_l}E_{s_l}\left[   \mid  g_{k_l}^+ -  g_{k_l}^-    \mid ^j \right]  \leq     \frac{j!}{2}  \tau_n^{j-2} \gamma ^2. 
\end{equation*} 
Recall Theorem 6.8 from \cite{Mas2007}:

\begin{theorem}
\label{massartThm}
Let $\mathcal{G}$ be a countable class of real-valued and measurable functions. Assume that there exist some positive numbers $V$ and $b$ such that for all $f \in \mathcal{G}$ and all integers $j\geq 2$,
\begin{equation*}
E[\mid f \mid^j ] \leq \frac{j!}{2}Vb^{j-2}.
\end{equation*}
Assume furthermore that for any positive number $\gamma$, there exist a finite set $\mathcal{B}(\gamma)$ of brackets covering $\mathcal{G}$ such that for any bracket $[g^-, g^+] \in \mathcal{B}(\gamma)$ and all integers $k\geq 2$,
\begin{equation*}
E[\mid  g^+ - g^-  \mid^j] \leq \frac{j!}{2} \gamma^2 b^{j-2}.
\end{equation*}
Let $e^{H(\gamma)}$ denote the minimal cardinality of such a covering. Then, there exists an absolute constant $\kappa$ such that for any $ \epsilon \in (0,1]$ and any measurable set $A$ with $\text{pr}(A)>0$,
\begin{equation*}
E^A\left[   \frac{1}{n}    \sup_{f\in\mathcal{G}}  \sum_{l=1}^{L} \nu_l(f) \right] \leq E + \frac{(1+6\epsilon)(2V)^{1/2}}{n^{1/2}}\log\left(\frac{1}{\text{pr}(A)}\right)^{1/2} + \frac{2b}{n}\log\left( \frac{1}{\text{pr}(A)}  \right),
\end{equation*}
where $E = \frac{\kappa}{\epsilon} \frac{1}{\sqrt{n}} \int_{0}^{\epsilon V^{1/2}}(H(\gamma) \land n )^{1/2}d\gamma +\frac{2(b+\sqrt{V})}{n} H(V^{1/2}) $. Furthermore, $\kappa \leq 27$.
\end{theorem}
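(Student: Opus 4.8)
The statement is Massart's abstract bracketing maximal inequality, the engine behind Lemma~\ref{lemmaConc}. The plan is to prove it by chaining over the bracketing covers, controlling each chaining increment with Bernstein's inequality, and then converting the resulting exponential deviation bound into the stated conditional-expectation form. The crucial structural feature is that \emph{brackets}, rather than a mere metric net, are available: each $f$ is deterministically sandwiched between the two endpoints of an approximating bracket whose width satisfies a Bernstein moment condition. This lets one control the empirical process directly, with no symmetrization, since the fluctuation of $f$ around its bracket is dominated by that of the explicit, countable width $g^+-g^-$.

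First I would record the single-function deviation. The hypothesis $E[\mid f\mid^j]\le\tfrac{j!}{2}Vb^{j-2}$ is exactly the Bernstein moment condition, so for every fixed $g$ with variance proxy $v$ and scale $b$ and every $x>0$,
\begin{equation*}
\text{pr}\!\left(\tfrac1n\sum_{l=1}^{L}\nu_l(g) > \sqrt{\tfrac{2vx}{n}}+\tfrac{bx}{n}\right)\le e^{-x},
\end{equation*}
and the same holds for a width $g^+-g^-$ with $v$ replaced by $\gamma^2$. Next I set geometric scales $\gamma_j=\epsilon V^{1/2}2^{-j}$ for $j\ge0$ and fix, for each $j$, a minimal bracket cover $\mathcal B(\gamma_j)$ of cardinality $e^{H(\gamma_j)}$. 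For $f\in\mathcal G$ let $[f_j^-,f_j^+]$ be a bracket of $\mathcal B(\gamma_j)$ containing $f$, and decompose $\nu_l(f)$ along the telescoping chain $f_0^-,f_1^-,\ldots,f_{j_\ast}^-$ truncated at a level $j_\ast$ with $H(\gamma_{j_\ast})\approx n$. Each increment $f_j^--f_{j-1}^-$ lies in a bracket of width $\lesssim\gamma_{j-1}$, while the residual fluctuation of $f$ around $f_{j_\ast}^-$ is dominated pointwise by $f_{j_\ast}^+-f_{j_\ast}^-$ and, after centering, produces the boundary term $\tfrac{2(b+\sqrt V)}{n}H(V^{1/2})$.

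At level $j$ there are at most $e^{H(\gamma_{j-1})+H(\gamma_j)}$ distinct increments; applying the Bernstein bound with a union bound and allocating the deviation budget $x+H(\gamma_{j-1})+H(\gamma_j)$ to level $j$, the sub-Gaussian parts sum to a multiple of $\tfrac1{\epsilon\sqrt n}\sum_j\gamma_j H(\gamma_j)^{1/2}$, which I would compare, using monotonicity of $H$, to the Dudley integral $\int_0^{\epsilon V^{1/2}}H(\gamma)^{1/2}d\gamma$. The truncation $H\wedge n$ enters precisely at $j_\ast$: once $H(\gamma_j)>n$ the linear branch $bx/n$ of Bernstein dominates and the increment saturates at level $n$, so the integrand is $(H(\gamma)\wedge n)^{1/2}$. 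Collecting terms yields the integral contribution of $E$, with the coarsest scale and the intrinsic fluctuation of $f$ (variance proxy $V$) supplying the term $(1+6\epsilon)(2V)^{1/2}n^{-1/2}\log(1/\text{pr}(A))^{1/2}$ and the scale term $2bn^{-1}\log(1/\text{pr}(A))$; the factor $(1+6\epsilon)$ arises by carrying the coarsest-level variance outside the chaining and letting $\epsilon$ tune only the chaining integral.

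Finally I would pass from the tail to the conditional expectation: for the nonnegative excess $Z$ of $\tfrac1n\sup_f\sum_l\nu_l(f)$ over its deterministic part, $E^A[Z]=\tfrac1{\text{pr}(A)}\int_0^\infty\text{pr}(\{Z>t\}\cap A)\,dt\le\tfrac1{\text{pr}(A)}\int_0^\infty\min(\text{pr}(A),e^{-\psi(t)})\,dt$, and splitting the integral where $e^{-\psi(t)}=\text{pr}(A)$ reproduces the two $\log(1/\text{pr}(A))$ summands, since $\psi$ is the Bernstein rate $\psi(t)\asymp\min(nt^2/V,nt/b)$. The main obstacle I anticipate is the constant bookkeeping needed to reach the explicit $\kappa\le27$: the per-level deviation allocation and the geometric ratio must be chosen so that the sub-Gaussian and sub-exponential regimes patch together at $j_\ast$ without constant blow-up, and the discrete-to-integral comparison $\sum_j\gamma_j H(\gamma_j)^{1/2}\le\text{(const)}\int H^{1/2}$ must be carried out tightly; keeping the dominant variance term free of the chaining slack is exactly what allows the $(1+6\epsilon)$ factor to stay sharp.
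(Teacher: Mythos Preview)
The paper does not prove this theorem at all: it is introduced with the words ``Recall Theorem 6.8 from \cite{Mas2007}'' and then simply applied. So there is nothing to compare your proposal against in the paper itself; the authors treat it as an imported black box from Massart's monograph.

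Your sketch is, broadly, the correct strategy one finds in Massart's book: Bernstein's inequality at the single-function level, a chaining decomposition along bracket covers at dyadic scales, union bounds over the brackets at each level, summation of the sub-Gaussian pieces into a Dudley-type integral with the truncation $H\wedge n$ appearing where the sub-exponential branch takes over, and finally an integration of the tail bound to reach the conditional expectation $E^A[\cdot]$. The structural point you emphasize---that brackets, not balls, are needed so that the residual $f-f_{j_\ast}^-$ is pointwise dominated by an explicit width---is exactly the reason bracketing entropy enters here rather than metric entropy.

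If you intend to fill in an actual proof rather than cite Massart, be aware that two places in your outline are more delicate than you indicate. First, the passage from the deviation inequality to the $E^A$ bound is not quite the integration you wrote: Massart obtains the conditional form by a direct argument (closer to the duality in Lemma~\ref{markovIneq}) rather than by splitting $\int\min(\text{pr}(A),e^{-\psi(t)})\,dt$, and getting the clean coefficients $(1+6\epsilon)(2V)^{1/2}$ and $2b$ requires tracking this carefully. Second, reaching $\kappa\le27$ is genuinely tedious and hinges on specific choices of the dyadic ratio and the level-wise budget; your sketch correctly flags this as the main bookkeeping obstacle, but it is substantial enough that in a paper one would normally do exactly what the authors did and cite the source.
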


In our context, the assumptions of the theorem are satisfied on $\mathcal{G}(\tilde{f}, \sigma)$ with $V =  \sigma^2$ and $b= \tau_n$ except that the set $\mathcal{G}(\tilde{f}, \sigma)$ is not necessarily countable so the supremum $W(\tilde{f}, \sigma)$ can be non measurable. We rather define the countable subset: 
\begin{equation*}
\begin{split}
\mathcal{G}'(\tilde{f}, \sigma) = \left\{    \left(  -\mathbf{kl}(f_{k_l}) + \mathbf{kl}(\tilde{f}_{k_l})  \right)_{ l\in\{1,\ldots,L\} }     \mid f \in \mathcal{F}'_m , \mathbf{a}(f, \tilde{f})  \leq \sigma ^2  \right\},  
\end{split}
\end{equation*} 
with $\mathcal{F}'_m$ satisfying Assumption \ref{hypSep}. Thus, $W(\tilde{f}, \sigma) = \sup_{f\in \mathcal{G}'(\tilde{f}, \sigma)} \sum_{l=1}^{L} (-\nu_l(f_{k_l}) + \nu_l(\tilde{f}_{k_l}))$ almost surely, and by applying Theorem \ref{massartThm}, we can conclude that
\begin{equation*}
\begin{split}
E^A\left[  \sup_{f\in\mathcal{G}(\tilde{f}, \sigma)} \frac{1}{n} \sum_{l=1}^{L}(-\nu_l(f_{k_l}) + \nu_l(\tilde{f}_{k_l}))  \right] &\leq E + \frac{(1 + 6\epsilon)\sigma 2^{1/2}}{n^{1/2}}\log\left(\frac{1}{\text{pr}(A)}\right)^{1/2}
+ \frac{2\tau_n}{n}\log\left( \frac{1}{\text{pr}(A)}  \right).
\end{split}
\end{equation*}
Let find an upper bound for $E$. Take $\epsilon = 1$. We have:
\begin{equation*}
\begin{split}
E = \frac{ \kappa}{n^{1/2}} \int_{0}^{\sigma}(H(\gamma) \land n )^{1/2}d\gamma +\frac{2(\tau_n+\sigma)}{n} H(\sigma).
\end{split}
\end{equation*}
The mapping $\gamma \mapsto H(\gamma, \mathcal{F}_m(\tilde{f}, \sigma))$ is non-increasing. By Assumption \ref{hypPhi}, if $\tilde{f}\in \mathcal{F}_m$, 
\begin{equation*}
\begin{split}
\int_{0}^{\sigma}(H(\gamma, \mathcal{F}_m(\tilde{f}, \sigma))\land n)^{1/2}d\gamma \leq \phi_m(\sigma).
\end{split}
\end{equation*}
Also,
\begin{equation*}
\begin{split}
H(\sigma, \mathcal{F}_m(\tilde{f}, \sigma)) = \frac{1}{\sigma}\int_{0}^{\sigma} H(\sigma, \mathcal{F}_m(\tilde{f}, \sigma)) d\gamma \leq      \left(\frac{1}{\sigma}\int_{0}^{\sigma} H(\gamma, \mathcal{F}_m(\tilde{f}, \sigma)) d\gamma  \right)^2   \leq \frac{\phi_m^2(\sigma)}{\sigma^2}.
\end{split}
\end{equation*}
By inserting these bounds,
\begin{equation*}
\begin{split}
E \leq \kappa \frac{1}{n^{1/2}} \phi_m(\sigma) +\frac{2(\tau_n+\sigma)}{n} \frac{\phi_m^2(\sigma)}{\sigma^2}
\leq  \left(\kappa   +2(\tau_n+\sigma)\frac{\phi_m(\sigma)}{n^{1/2}\sigma^2}\right) \frac{\phi_m(\sigma)}{n^{1/2}}.
\end{split}
\end{equation*}
Since $\delta \mapsto \delta^{-1}\phi_m(\delta)$ is non-increasing, so is $\delta \mapsto \delta^{-2}\phi_m(\delta)$. Also, by definition of $\sigma_m$, $\frac{\phi_m(\sigma_m)}{n^{1/2}\sigma_m^2} = 1$. Thus, when $\sigma\geq \sigma_m$,
\begin{equation*}
\begin{split}
E \leq \left(\kappa   +2(\tau_n+\sigma)\right) \frac{\phi_m(\sigma)}{n^{1/2}} \leq \left(27   +2(b+\sigma)\right) \frac{\phi_m(\sigma)}{{n}^{1/2}}.
\end{split}
\end{equation*}
Notice also that for any family $f,g\in \mathcal{F}_m$:
\begin{equation*}
\begin{split}
\mathbf{a}(f,g) = \frac{1}{n}\sum_{l=1}^{L}n_l   E_{s_l}   \left[   \mid    \log\left(\frac{f_{k_l}}{g_{k_l}}\right) \mid  ^2 \right]     
&\leq      \frac{1}{n}\sum_{l=1}^{L}n_l  \tau_n^2 = \tau_n^2.
\end{split}
\end{equation*}
Therefore, $\sigma \leq \tau_n$ and for all $\sigma \geq \sigma_m$:
\begin{equation*}
\begin{split}
E^A \left[   \sup_{f \in \mathcal{G}(\tilde{f}, \sigma)}  \frac{1}{n} \sum_{l=1}^{L}(-\nu_l(f_{k_l}) + \nu_l(\tilde{f}_{k_l}))   \right] &\leq (27 +4\tau_n )\frac{\phi_m(\sigma)}{n^{1/2}}+ \frac{7\times2^{1/2}}{n^{1/2}}\sigma\log \left( \frac{1}{\Proba(A)}  \right)^{1/2} 
+ \frac{2\tau_n}{n} \log \left( \frac{1}{\Proba(A)} \right).
\end{split}
\end{equation*}
We now use the pealing lemma as stated in Lemma 4.23 from \cite{Mas2007} in order to bound the supremum on the overall model. 

\begin{lemma}[\textbf{Pealing lemma}]
\label{pealing}
Let $S$ be a countable set, $\tilde{f} \in S$ and $a : S \rightarrow \mathbb{R}^+$ such that $a(\tilde{f}) = \inf_{f\in S}a(f)$. 
Let $Z$ be a random process indexed by $S$ and $B(\sigma) = \{ f\in S \mid a(f) \leq \sigma\}$. 
Assume that for any positive $\sigma$ the non-negative random-variable $\sup_{f \in B(\sigma)} (Z(f) - Z(\tilde{f}))$ has finite expectation. Then, for any function $\psi$ on $\mathbb{R}^+$ such that $\frac{\psi(x)}{x}$ is non-increasing on $\mathbb{R}^+$ and
\begin{equation*}
E\left[    \sup_{f\in B(\sigma)} Z(f) - Z(\tilde{f})   \right] \leq \psi(\sigma) , \sigma \geq \sigma_{\star}\geq 0,
\end{equation*}
one has for any positive $x\geq \sigma_{\star}$:
\begin{equation*}
E\left[   \sup_{f\in S}  \frac{Z(f) - Z(\tilde{f})}{x^2 + a^2(f)} \right]  \leq 4 \frac{\psi(x)}{x^2}.
\end{equation*}
\end{lemma}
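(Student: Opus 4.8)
The plan is to run the classical peeling (slicing) argument: I would cut the index set $S$ into annuli on which the size function $a(f)$, and hence the denominator $x^2 + a^2(f)$, is pinched between consecutive powers of $2$; on each annulus invoke the stated control of $E\bigl[\sup_{f\in B(\sigma)}(Z(f) - Z(\tilde f))\bigr]$ together with the monotonicity of $u\mapsto\psi(u)/u$; then sum a geometric series.

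\emph{Slicing and per-annulus control.} I would fix a positive $x\ge\sigma_{\star}$, put $S_0 = B(x)$ and, for every integer $j\ge 1$,
\[
S_j = B(2^j x)\setminus B(2^{j-1}x) = \bigl\{\, f\in S : 2^{j-1}x < a(f)\le 2^j x\,\bigr\},
\]
so that $S = \bigcup_{j\ge 0}S_j$. Then $x^2 + a^2(f)\ge x^2$ on $S_0$ and $x^2 + a^2(f) > (1 + 4^{j-1})x^2$ on $S_j$ for $j\ge 1$, while $S_j\subseteq B(2^j x)$ throughout. Since $2^j x\ge x\ge\sigma_{\star}$, the hypothesis yields $E\bigl[\sup_{f\in B(2^j x)}(Z(f)-Z(\tilde f))\bigr]\le\psi(2^j x)$; this supremum being non-negative, it also equals $E\bigl[\sup_{f\in B(2^j x)}(Z(f)-Z(\tilde f))_+\bigr]$. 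Replacing $Z(f)-Z(\tilde f)$ by its positive part — which is exactly what makes the division by the lower bound on the denominator legitimate — and using $\psi(2^j x)\le 2^j\psi(x)$, I get for $j\ge 1$
\[
E\Bigl[\sup_{f\in S_j}\frac{Z(f)-Z(\tilde f)}{x^2+a^2(f)}\Bigr]\;\le\;\frac{\psi(2^j x)}{(1+4^{j-1})x^2}\;\le\;\frac{2^j}{1+4^{j-1}}\,\frac{\psi(x)}{x^2},
\]
and likewise $E\bigl[\sup_{f\in S_0}(Z(f)-Z(\tilde f))/(x^2+a^2(f))\bigr]\le\psi(x)/x^2$ for $j=0$.

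\emph{Summation.} Bounding $\sup_{f\in S}(\cdot)$ by $\sum_{j\ge 0}\sup_{f\in S_j}(\cdot)_+$ and using finiteness of all the expectations involved to interchange the sum with the expectation,
\[
E\Bigl[\sup_{f\in S}\frac{Z(f)-Z(\tilde f)}{x^2+a^2(f)}\Bigr]\;\le\;\frac{\psi(x)}{x^2}\Bigl(1 + \sum_{j\ge 1}\frac{2^j}{1+4^{j-1}}\Bigr)\;\le\;4\,\frac{\psi(x)}{x^2},
\]
since the numerical series is less than $3$. (With only the cruder bound $4^{j-1}x^2$ on the denominators of the annuli one would get $1+\sum_{j\ge 1}2^j/4^{j-1}=5$ instead; keeping the full $1+4^{j-1}$ on the outer annuli is what trims the constant down to the announced $4$.) This is exactly the claimed inequality.

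The argument is conceptually routine, and its only delicate points are: (i) one must pass to the positive part of $Z(f)-Z(\tilde f)$ before lower-bounding the denominator, as otherwise the inequality would reverse — this is precisely where the assumed non-negativity of $\sup_{f\in B(\sigma)}(Z(f)-Z(\tilde f))$ is used; and (ii) one must pick the slicing ratio (here $2$) and exploit the extra $+x^2$ in the denominator on the outer annuli so that the geometric series closes at the stated constant $4$. The finiteness-of-expectation hypothesis on each $\sup_{f\in B(\sigma)}(Z(f)-Z(\tilde f))$ is what legitimises the interchanges of supremum, sum, and expectation throughout; no further probabilistic input is needed.
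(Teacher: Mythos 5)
Your proof is correct. Note that the paper itself does not prove this lemma: it is imported verbatim as Lemma 4.23 of Massart (2007), so there is no in-paper argument to compare against. Your dyadic peeling — slicing $S$ into $B(x)$ and the annuli $B(2^jx)\setminus B(2^{j-1}x)$, passing to positive parts before lower-bounding the denominator by $(1+4^{j-1})x^2$, using $\psi(2^jx)\le 2^j\psi(x)$, and summing the series $1+\sum_{j\ge 1}2^j/(1+4^{j-1})\le 4$ — is exactly the standard argument behind Massart's statement, and all the delicate points you flag (positive parts, the extra $x^2$ in the outer denominators needed to close at the constant $4$) are handled correctly.
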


With $S = \mathcal{F}_m$, $\sigma_{\star} = \sigma_m$, $\tilde{f}$ to be specified later, $a(f) = \frac{1}{n}\sum_{l=1}^{L}n_lE   \left[   \mid     \log\left(\frac{f_{k_l}}{\tilde{f}_{k_l}}\right) \mid  ^2 \right] $, $Z(f) = \frac{1}{n}\sum_{l = 1}^{L} -\nu_l(f_{k_l})$ and $Z(\tilde{f})=  \frac{1}{n}\sum_{l = 1}^{L} -\nu_l(\tilde{f}_{k_l})$, provided $y_m \geq \sigma_m$, we have:
\begin{equation*}
\begin{split}
E^A \left[ \sup_{f \in \mathcal{F}_m}  \frac{1}{n}\sum_{l = 1}^{L}    \frac{-\nu_l(f_{k_l}) + \nu_l(\tilde{f}_{k_l})}{y_m^2  + \frac{1}{n}\sum_{l=1}^{L}n_lE _{s_l}  \left[   \mid    \log\left(\frac{f_{k_l}}{\tilde{f}_{k_l}}\right) \mid  ^2 \right]   }\right]
&\leq
4(27 +4\tau_n)\frac{\phi(y_m)}{n^{1/2}y_m^2}+ \frac{28\times 2^{1/2}}{n^{1/2}y_m}\log \left( \frac{1}{\Proba(A)}  \right)^{1/2} \\
&\quad+ \frac{8\tau_n}{ny_m^2} \log \left( \frac{1}{\Proba(A)} \right).
\end{split}
\end{equation*}
Using the monotonicity of $\delta \mapsto \phi(\delta)/\delta$ and the definition of $\sigma_m$,
\begin{equation*}
\begin{split}
E^A \left[ \sup_{f \in \mathcal{F}_m}  \frac{1}{n}\sum_{l = 1}^{L}    \frac{-\nu_l(f_{k_l}) + \nu_l(\tilde{f}_{k_l})}{y_m^2  +\frac{1}{n}\sum_{l=1}^{L}n_lE _{s_l}  \left[   \mid    \log\left(\frac{f_{k_l}}{\tilde{f}_{k_l}}\right) \mid  ^2 \right]  }\right]
&\leq
4(27 +4\tau_n)\frac{\sigma_m}{y_m} + \frac{28\sqrt{2}}{n^{1/2}y_m}\log \left( \frac{1}{\Proba(A)}  \right)^{1/2} \\
&\quad+ \frac{8\tau_n}{ny_m^2} \log \left( \frac{1}{\Proba(A)} \right).
\end{split}
\end{equation*}
We have chosen $\tilde{f}$ such that for any $f$ family of $K$ functions and any $\epsilon_d>0$,
\begin{equation*}
\frac{1}{n}\sum_{l=1}^{L}n_lE   \left[   \mid    \log\left(\frac{s_l}{\tilde{f}_{k_l}}\right) \mid  ^2 \right] \leq (1+\epsilon_d) \frac{1}{n}\sum_{l=1}^{L}n_lE   \left[   \mid    \log\left(\frac{s_l}{f_{k_l}}\right) \mid  ^2 \right].
\end{equation*}
Therefore, $\frac{1}{n}\sum_{l=1}^{L}n_lE   \left[   \mid    \log\left(\frac{f_{k_l}}{\tilde{f}_{k_l}}\right) \mid  ^2 \right]   \leq 2(2+\epsilon_d) \frac{1}{n}\sum_{l=1}^{L}n_l E   \left[   \mid    \log\left(\frac{f_{k_l}}{s_{l}}\right) \mid  ^2 \right]  $ and
\begin{equation*}
\begin{split}
E^A \left[ \sup_{f \in \mathcal{F}_m}  \frac{1}{n}\sum_{l = 1}^{L}    \frac{-\nu_l(f_{k_l}) + \nu_l(\tilde{f}_{k_l})}{y_m^2  + 2(2+\epsilon_d)\frac{1}{n}\sum_{l=1}^{L}n_lE   \left[   \mid    \log\left(\frac{f_{k_l}}{s_{l}}\right) \mid  ^2 \right] }\right]
&\leq
4(27 +4\tau_n)\frac{\sigma_m}{y_m} \\
&\quad + \frac{28\times 2^{1/2}}{n^{1/2}y_m}\log \left( \frac{1}{\Proba(A)}  \right)^{1/2}\\
&\quad + \frac{8\tau_n}{ny_m^2} \log \left( \frac{1}{\Proba(A)} \right).
\end{split}
\end{equation*}
We now use a Bernstein-type control, which is a rewriting of Bernstein's theorem:

\begin{lemma}[\textbf{Bernstein Inequality}]
\label{bernstein}
Assume there exist $V',b'\geq 0$ such that for all integer $j\geq 2$ 
\begin{equation*}
\frac{1}{n}\sum_{l = 1}^{L} n_lE\left[\log\left(\frac{f_{k_l}}{s_l}\right)_+^j\right] \leq \frac{j!}{2}V'b'^{j-2}.
\end{equation*}
Then, for all measurable event $A$ such that $\text{pr}(A)>0$,
\begin{equation*}
E^A\left[\frac{1}{n}\sum_{l = 1}^{L}-\nu_l(f_{k_l})\right] \leq \left(\frac{2V'}{n}\log(\frac{1}{\text{pr}(A)})\right)^{1/2} + \frac{b'}{n}\log \left(  \frac{1}{\text{pr}(A)} \right).
\end{equation*}
\end{lemma}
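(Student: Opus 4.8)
The statement is the conditional-expectation form of Bernstein's inequality, so the plan is to reduce $\frac1n\sum_{l}-\nu_l(f_{k_l})$ to a centered sum of independent variables, recognize the hypothesis as Bernstein's moment condition for that sum, apply the corresponding exponential moment bound, and convert it into an $E^A$-estimate via the duality formula for the Kullback--Leibler divergence. Concretely, set $W_{l,i}=\log\bigl(f_{k_l}/s_l\bigr)(X_l^i)$; unfolding the definitions of $\mathbf{kl}$ and $\nu_l$ gives $-\nu_l(f_{k_l})=\sum_{i=1}^{n_l}\bigl(W_{l,i}-E_{s_l}[W_{l,i}]\bigr)$, so that
\[
\frac1n\sum_{l=1}^{L}-\nu_l(f_{k_l})=\frac1n\bigl(\Sigma-E[\Sigma]\bigr),\qquad \Sigma=\sum_{l=1}^{L}\sum_{i=1}^{n_l}W_{l,i}.
\]
Since the $X_l$ are independent and, for fixed $l$, the $X_l^i$ are i.i.d.\ with law $s_l$, $\Sigma$ is a sum of $n$ independent random variables; summing the hypothesis over $i$ yields, for every integer $j\ge2$,
\[
\sum_{l=1}^{L}\sum_{i=1}^{n_l}E\bigl[(W_{l,i})_+^{\,j}\bigr]=\sum_{l=1}^{L}n_l\,E_{s_l}\!\left[\log\!\bigl(f_{k_l}/s_l\bigr)_+^{\,j}\right]\le\frac{j!}{2}\,(nV')\,b'^{\,j-2},
\]
i.e.\ the centered sum $\Sigma-E[\Sigma]$ satisfies Bernstein's moment condition with variance factor $nV'$ and scale $b'$ (and Assumption~\ref{GhypSurModele} forces $W_{l,i}\ge-\tau_n$, so its left tail is automatically controlled, which is all that is needed since we only want an upper bound).

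The classical Bernstein computation for such a sum then gives, for every $0\le\lambda<1/b'$, the log-Laplace estimate
\[
\log E\bigl[e^{\lambda(\Sigma-E[\Sigma])}\bigr]\le\frac{\lambda^2\,nV'}{2\,(1-\lambda b')}
\]
(see \cite{Mas2007}). I would then combine this with the variational identity $E_Q[U]\le\log E\bigl[e^{U}\bigr]+\mathbf{KL}(Q,\Proba)$ applied to $Q=\Proba(\cdot\mid A)$, for which $\mathbf{KL}(Q,\Proba)=\log(1/\Proba(A))$, and to $U=\lambda(\Sigma-E[\Sigma])$; this gives, for all $0<\lambda<1/b'$,
\[
\lambda\,E^A\bigl[\Sigma-E[\Sigma]\bigr]\le\frac{\lambda^2\,nV'}{2\,(1-\lambda b')}+\log\frac{1}{\Proba(A)}.
\]

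Finally I would optimize the right-hand side over $\lambda$: the infimum over $\lambda\in(0,1/b')$ of $\lambda^{-1}\bigl(\frac{\lambda^2 nV'}{2(1-\lambda b')}+x\bigr)$ equals $\sqrt{2nV'x}+b'x$---this is the inversion of the Cram\'er transform of Bernstein's log-Laplace bound, and is standard (\cite{Mas2007}). Taking $x=\log(1/\Proba(A))$ we obtain $E^A[\Sigma-E[\Sigma]]\le\sqrt{2nV'x}+b'x$, and dividing by $n$ gives precisely
\[
E^A\!\left[\frac1n\sum_{l=1}^{L}-\nu_l(f_{k_l})\right]\le\left(\frac{2V'}{n}\log\frac{1}{\Proba(A)}\right)^{1/2}+\frac{b'}{n}\log\frac{1}{\Proba(A)}.
\]
The two steps requiring genuine care are the passage from the positive-part moment hypothesis (together with the lower bound $-\tau_n$) to the log-Laplace estimate, and the closed-form evaluation of the Legendre-type infimum; both are classical and can be quoted from \cite{Mas2007}, so the main work here is really just the reduction and bookkeeping in the first paragraph.
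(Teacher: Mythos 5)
Your route is the right one and in effect supplies the proof the paper omits: the paper merely introduces Lemma~\ref{bernstein} as ``a rewriting of Bernstein's theorem'' and never proves it, whereas you carry out the standard derivation --- unfold $\nu_l$ into a centered sum of the $n$ independent variables $W_{l,i}$, check Bernstein's moment condition, bound the log-Laplace transform, transfer it to $E^A$ via the Gibbs/Donsker--Varadhan inequality together with $\mathbf{KL}(\Proba(\cdot\mid A),\Proba)=\log(1/\Proba(A))$, and invert the Legendre transform. The decomposition, the change of measure, and the closed-form optimization $\inf_{\lambda}\lambda^{-1}\bigl(\tfrac{\lambda^2 nV'}{2(1-\lambda b')}+x\bigr)=(2nV'x)^{1/2}+b'x$ are all correct.

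The one step that does not hold as written is the parenthetical claim that the lower bound $W_{l,i}\ge-\tau_n$ ``controls the left tail, which is all that is needed.'' The hypothesis controls only positive-part moments; for $j\ge3$ that is indeed all the one-sided Bernstein bound (Massart, Proposition~2.9) requires, but at order $j=2$ that bound needs the \emph{full} second moment $\sum_{l,i}E[W_{l,i}^2]\le nV'$, because in the elementary inequality $e^{u}-u-1\le \tfrac{u^2}{2}+\sum_{j\ge3}\tfrac{(u)_+^j}{j!}$ the negative values of $u$ still contribute through $u^2$. A purely one-sided second-moment condition is genuinely insufficient for the upper tail (a variable equal to $-M$ with probability $1-p$ and to $a>0$ with probability $p$ has negligible positive-part moments, yet its centered version exceeds roughly $M(1-p)$ with probability $p$), and the crude bound $E[(W_{l,i})_-^2]\le\tau_n^2$ coming from Assumption~\ref{GhypSurModele} would inflate the variance factor from $V'$ to $V'+\tau_n^2$, destroying the stated constant. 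The fix is harmless but should be stated: either strengthen the $j=2$ hypothesis to the absolute second moment --- which is exactly what is available where the lemma is invoked in the proof of Lemma~\ref{lemmaConc}, since there $V'=\frac1n\sum_l n_lE\bigl[\,\vert\log(\tilde f_{k_l}/s_l)\vert^2\bigr]$ --- or add $\sum_{l,i}E[W_{l,i}^2]\le nV'$ explicitly to the assumptions. With that adjustment the rest of your argument goes through verbatim.
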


By taking $V'= \frac{1}{n}\sum_{l=1}^{L}n_lE   \left[   \mid    \log\left(\frac{\tilde{f}_{k_l}}{s_{l}}\right) \mid  ^2 \right]$ and $b' = \tau_n$ and applying this result to $\tilde{f}$, this yields for all $y_m, \kappa'>0$:
\begin{equation*}
\begin{split}
E^A \left[  \frac{1}{n}\sum_{l = 1}^{L}   \frac{- \nu_l(\tilde{f}_{k_l}) }{y_m^2 +   \kappa'^2 \frac{1}{n}  \sum_{l=1}^{L}n_l  E   \left[   \mid    \log\left(\frac{s_l}{\tilde{f}_{k_l}}\right) \mid  ^2 \right]  } \right ]  
&\leq   \frac{1}{{y_m^2 +   \kappa'^2 \frac{1}{n} \sum_{l=1}^{L}n_lE   \left[   \mid    \log\left(\frac{s_l}{\tilde{f}_{k_l}}\right) \mid  ^2 \right]  }}    \left(\frac{  2V'  }{n}\right)^{1/2}    \log    \left(\frac{1}{\Proba(A)}     \right)^{1/2}      \\
&\quad+    \frac{1}{{y_m^2 +   \kappa '^2 \frac{1}{n} \sum_{l=1}^{L}n_lE   \left[   \mid    \log\left(\frac{s_l}{\tilde{f}_{k_l}}\right) \mid  ^2 \right] }}  \frac{b'}{n}\log\left(\frac{1}{\Proba(A)}\right) \\
&\leq      \frac{1}{\kappa'}   \left( \frac{2} {ny_m^2} \right)^{1/2}  \log    \left(\frac{1}{\Proba(A)}     \right)^{1/2}     +     \frac{1}{y_m^2 } \frac{b'}{n}\log\left(\frac{1}{\Proba(A)}\right).
\end{split}
\end{equation*}
Therefore, 
\begin{equation*}
\begin{split}
E^A \left[  \sup_{f \in \mathcal{F}_m}  \frac{1}{n}\sum_{l = 1}^{L}    \frac{-\nu_l(f_{k_l}) + \nu_l(\tilde{f}_{k_l})}{y_m^2  +      2(2+\epsilon_d)\frac{1}{n}  \sum_{l=1}^{L}n_lE   \left[   \mid    \log\left(\frac{f_{k_l}}{s_{l}}\right) \mid  ^2 \right]  }  \right.
 \left.+  \frac{1}{n}\sum_{l = 1}^{L}   \frac{- \nu_l(\tilde{f}_{k_l}) }{y_m^2 +   \kappa'^2 \frac{1}{n}  \sum_{l=1}^{L}n_l   E   \left[   \mid    \log\left(\frac{s_l}{\tilde{f}_{k_l}}\right) \mid  ^2 \right]   }  \right] \\  
\leq 4(27 +4\tau_n)\frac{\sigma_m}{y_m} +  \left(  \frac{28\times 2^{1/2}} {(ny_m^2)^{1/2}}  +   \frac{1}{\kappa'}    \left(\frac{2} {ny_m^2} \right)^{1/2} \right)      \log \left( \frac{1}{\Proba(A)}  \right)^{1/2} 
+\left( \frac{9\tau_n}{ny_m^2}   \right) \log \left( \frac{1}{\Proba(A)} \right).
\end{split}
\end{equation*}
Choosing $\kappa'$ as $\kappa'^2_d = \frac{2(2+\epsilon_d)}{1+\epsilon_d}$, we can conclude that since:
\begin{equation*}
\begin{split}
E^A \left[  \sup_{f \in \mathcal{F}_m}  \frac{1}{n}\sum_{l = 1}^{L}    \frac{-\nu_l(f_{k_l}) + \nu_l(\tilde{f}_{k_l})}{y_m^2  +      2(2+\epsilon_d) \frac{1}{n}\sum_{l=1}^{L}n_l E   \left[   \mid    \log\left(\frac{f_{k_l}}{s_{l}}\right) \mid  ^2 \right]  }  +  \right.
\left.\frac{1}{n}\sum_{l = 1}^{L}   \frac{- \nu_l(\tilde{f}_{k_l}) }{y_m^2 +   \kappa'^2   \frac{1}{n} \sum_{l=1}^{L}n_l  E   \left[   \mid    \log\left(\frac{s_l}{\tilde{f}_{k_l}}\right) \mid  ^2 \right]   }  \right] \\  
\geq E^A \left[  \sup_{f \in \mathcal{F}_m}  \frac{1}{n}\sum_{l = 1}^{L}    \frac{-\nu_l(f_{k_l}) + \nu_l(\tilde{f}_{k_l})}{y_m^2  +      2(2+\epsilon_d) \frac{1}{n}\sum_{l=1}^{L}n_l E   \left[   \mid    \log\left(\frac{f_{k_l}}{s_{l}}\right) \mid  ^2 \right]  }  \right.
\left.+  \frac{1}{n}\sum_{l = 1}^{L}   \frac{- \nu_l(\tilde{f}_{k_l}) }{  y_m^2  +      2(2+\epsilon_d)\frac{1}{n} \sum_{l=1}^{L}n_l E   \left[   \mid    \log\left(\frac{f_{k_l}}{s_{l}}\right) \mid  ^2 \right] }  \right] ,
\end{split}
\end{equation*}
we have
\begin{equation*}
\begin{split}
E^A \left[  \sup_{f \in \mathcal{F}_m}  \frac{1}{n}\sum_{l = 1}^{L}    \frac{-\nu_l(f_{k_l}) }{y_m^2  +      2(2+\epsilon_d) \frac{1}{n}\sum_{l=1}^{L}n_l  E   \left[   \mid    \log\left(\frac{f_{k_l}}{s_{l}}\right) \mid  ^2 \right]  }  \right ]  \\
\leq 4(27 + 4\tau_n )\frac{\sigma_m}{y_m}
+  \left(  \frac{28\times 2^{1/2}} {(ny_m^2)^{1/2}}  +   \frac{1}{\kappa'_d}   \left( \frac{2} {ny_m^2} \right)^{1/2}  \right)       \log \left( \frac{1}{\Proba(A)}  \right)^{1/2} 
+ \frac{9\tau_n}{ny_m^2}   \log \left( \frac{1}{\Proba(A)} \right).
\end{split}
\end{equation*}
Defining $\kappa'_1 = 4(27 + 4\tau_n)$ , $\kappa'_2 = 2^{1/2}(28 + \frac{1}{\kappa'_d})$ and $\kappa'_0 = 2(2+\epsilon_d)$ leads to the conclusion.

\subsection{Model selection theorem}
Just as model complexity appeared in the single model inequality, the multi-model case involves a term that takes the global collection into account. Therefore, we assume the existence of the following Kraft inequality which bounds in a sense the complexity of our collection of models:

\begin{assumption}[\textbf{Kraft inequality}]
\label{hypKraft}
There exists a family $(x_m)_{m \in \mathcal{M}}$ of non-negative numbers such that
\begin{equation*}
\sum_{m\in \mathcal{M}} e^{-x_m} \leq \Sigma < + \infty.
\end{equation*}
\end{assumption}

\begin{theorem}
\label{thmModelSelection}
Let $(x_1,\ldots,x_L)$ be $L$ observations of independent random vectors $(X_1,\ldots,X_L)$ where each $X_l$ consists of $n_l$ independent and identically distributed instances of a multinomial vector that has $s_l$ as a true categorical density with respect to some known positive measure. Assume $(\mathcal{S}_m)_{m\in\mathcal{M}}$ is an at most countable collection of models for which Assumption \ref{hypKraft} holds. For every model $S_m\in \mathcal{S}$, we also assume that Assumptions \ref{GhypSurModele}, \ref{hypPhi} and \ref{hypSep} hold. Let $\mathbf{pen}$ be a non-negative penalty function and $\widehat{m}$ any $\eta'$-minimizer of
\begin{equation*}
\mathbf{crit}(m) =   \min_{(\pi^m ,f^m )\in \Theta_{m}}    \gamma_n(\pi^m, f^m)   +\mathbf{pen}(m).
\end{equation*}
Let $(\widehat{\pi}^{\widehat{m}}, \widehat{f}^{\widehat{m}})$ be the corresponding $\eta$-likelihood minimizers in $\mathcal{S}_{\widehat{m}}$ and define the resulting cluster assignment for each vector  $x_l = (x_l^1,\ldots,x_l^{n_l})$:
\begin{equation*}
\widehat{k_l} = \argmax_{k \in \Iintv{1,\widehat{K}}} \left\{ \widehat{\pi}_k^{\widehat{m}} \left(\prod_{i = 1}^{n_l} \widehat{f}_k^{\widehat{m}}(x_l^i) \right) \right\}.
\end{equation*}
Define $\mathfrak{D}_m = n\sigma_m^2$ with $\sigma_m$ the unique root of $\frac{1}{\sigma}\phi_m({\sigma}) = \sqrt{n}\sigma$. Then, for any $C_1>0$, there exist some constants $\kappa_0$ and $C_2$ that depend only on $C_1$, and such that whenever
\begin{equation*}
\mathbf{pen}(m)\geq \kappa (\mathfrak{D}_m + L \log(K) +  x_m) \text{ with } \kappa > 1 + \kappa_0,
\end{equation*}
for all model $m\in\mathcal{M}$, the penalized log-likelihood estimate satisfies 
\begin{equation*}
\begin{split}
E\left[\sum_{l =1}^{L}  \frac{n_l}{n} \mathbf{KL}(s_l, \widehat{f}_{\widehat{k_l}}^{\widehat{m}} ) \right]
&\leq C_1   \inf_{m\in \mathcal{M}} \left\{
 \inf_{\substack{f \in \mathcal{F}_m\\(k_l)_l \in \Iintv{1,K}^L}} \left(\sum_{l = 1}^{L}\frac{n_l}{n} \mathbf{KL}(s_l, f_{k_l}^m) + \frac{\mathbf{pen}(m)}{n}\right)   \right\}\\
&\quad+ C_2 \frac{\Sigma}{n} + \frac{\eta  + \eta' }{n}.
\end{split}
\end{equation*}
\end{theorem}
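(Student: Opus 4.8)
The plan is to replay the proof of Theorem~\ref{thmSingleModel} almost verbatim, the only new ingredient being a union bound over the collection $\mathcal{M}$, in which the Kraft weights $(x_m)_{m\in\mathcal{M}}$ of Assumption~\ref{hypKraft} play, for the inter-model fluctuations, the role that $L\log K$ played for the intra-model ones. First I would carry out the deterministic part of the single-model argument. Writing $\widehat{K}=K(\widehat{m})$ and using $\widehat{\pi}_{\widehat{k_l}}^{\widehat{m}}\leq1$, the monotonicity of $-\log$, the bound of the infimum over the simplex by the uniform vector, and the two defining inequalities of $\widehat{m}$ (an $\eta'$-minimizer of $\mathbf{crit}$) and of $(\widehat{\pi}^{\widehat{m}},\widehat{f}^{\widehat{m}})$ (an $\eta$-likelihood minimizer in $\mathcal{S}_{\widehat{m}}$), one obtains, exactly as in \eqref{majorationKL} with $\bar f$ chosen as in \eqref{fbarIneq}, that for every fixed $m\in\mathcal{M}$, every cluster assignment $(k_l)_l$ and every $f\in\mathcal{F}_m$,
\begin{equation*}
\begin{split}
\sum_{l=1}^{L} n_l\mathbf{KL}(s_l,\widehat{f}_{\widehat{k_l}}^{\widehat{m}})
&\leq 2L\log\widehat{K}+\sum_{l=1}^{L}\nu_l(\bar{f}_{k_l})-\sum_{l=1}^{L}\nu_l(\widehat{f}_{\widehat{k_l}}^{\widehat{m}})+\mathbf{pen}(m)-\mathbf{pen}(\widehat{m})\\
&\quad+\inf_{f\in\mathcal{F}_m}\sum_{l=1}^{L}n_l\mathbf{KL}(s_l,f_{k_l})+\delta+\eta+\eta'.
\end{split}
\end{equation*}
The only term absent from the single-model case is $\mathbf{pen}(m)-\mathbf{pen}(\widehat{m})$.

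The heart of the argument is a control of $-\sum_l\nu_l(\widehat{f}_{\widehat{k_l}}^{\widehat{m}})$ valid uniformly over $\mathcal{M}$, which is needed because $\widehat{m}$ is data-dependent so Lemma~\ref{lemmaConc} cannot be applied to one fixed model. For every $m$ I would exponentiate the bound of Lemma~\ref{lemmaConc}, sum it over the $K(m)^L$ cluster assignments inside the model and over $m\in\mathcal{M}$ with the Kraft weights $e^{-x_m}$, and apply the Markov-type Lemma~\ref{markovIneq} to the resulting weighted supremum. Taking, for a free parameter $\theta>1$,
\begin{equation*}
y_m^2=\theta^2\left(\frac{x+x_m}{n}+\sigma_m^2+\frac{L\log K(m)}{n}\right),\qquad \lambda_m=ny_m^2,
\end{equation*}
the factor $K(m)^L=e^{L\log K(m)}$ is absorbed by the term $L\log K(m)$ inside $y_m^2$ and the factor $\sum_{m\in\mathcal{M}}e^{-x_m}\leq\Sigma$ by the term $x_m$ inside $y_m^2$, so that outside an event of probability at most $\Sigma e^{-x}$ one has, simultaneously for all $m$,
\begin{equation*}
\frac{1}{n}\sum_{l=1}^{L}\frac{-\nu_l(\widehat{f}_{\widehat{k_l}}^{\widehat{m}})}{y_{\widehat{m}}^2+\kappa'_0\frac{1}{n}\sum_{l=1}^{L}n_l E\bigl[|\log(\widehat{f}_{\widehat{k_l}}^{\widehat{m}}/s_l)|^2\bigr]}\leq\frac{9\tau_n+1}{\theta^2}+\frac{\kappa'_1+\kappa'_2}{\theta},
\end{equation*}
whence a bound on $\frac1n\sum_l-\nu_l(\widehat{f}_{\widehat{k_l}}^{\widehat{m}})$ after clearing the denominator.

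Substituting this into the deterministic inequality and using Proposition~\ref{meynet} to replace the squared log-moments $\frac1n\sum_l n_l E[|\log(\widehat{f}_{\widehat{k_l}}^{\widehat{m}}/s_l)|^2]$ by a small multiple $\epsilon_{pen}$ of $\frac1n\sum_l n_l\mathbf{KL}(s_l,\widehat{f}_{\widehat{k_l}}^{\widehat{m}})$ — with $\epsilon_{pen}$ fixed by $\theta$ through $\bigl(\tfrac{9\tau_n+1}{\theta^2}+\tfrac{\kappa'_1+\kappa'_2}{\theta}\bigr)\kappa'_0=C_{\tau_n}\epsilon_{pen}$ — lets me move that quantity to the left-hand side, which is exactly what keeps $\mathbf{KL}$ on both sides. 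On the good event the leftover $\widehat{m}$-dependent contributions are of the form $a(\mathfrak{D}_{\widehat{m}}+L\log\widehat{K}+x_{\widehat{m}})/n$ for a constant $a$ depending only on $\kappa_0$, so the assumption $\mathbf{pen}(\widehat{m})\geq\kappa(\mathfrak{D}_{\widehat{m}}+L\log\widehat{K}+x_{\widehat{m}})$ with $\kappa>1+\kappa_0$ makes them dominated by $\mathbf{pen}(\widehat{m})/n$ up to a residue absorbed into $\epsilon_{pen}$; only $+\mathbf{pen}(m)/n$ survives. Integrating the tail via $E[Z]=\alpha\int_0^\infty\Proba(Z>\alpha x)\,dx$ — this is where $C_2\Sigma/n$ appears, the $\Sigma$ coming from the Kraft sum — using $E[\nu_l(\bar{f}_{k_l})]=0$, taking the infimum over $m$ and letting $\delta\to0$ yields the stated oracle inequality with $C_1=1/(1-\epsilon_{pen})$ and $C_2$ proportional to $\kappa_0$.

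The step I expect to be the main obstacle is the bookkeeping of this double union bound: making the weighted exponential-moment sum $\sum_{m\in\mathcal{M}}\sum_{(k_l)_l}e^{-x_m}\exp(\lambda_m\,\cdot)$ controllable by a single constant uniformly over a collection in which both the cluster-assignment count $K(m)^L$ and the model index range freely, and checking that the residual $\widehat{m}$-dependent terms are exactly the combination $\mathfrak{D}_{\widehat{m}}+L\log\widehat{K}+x_{\widehat{m}}$ that the lower bound on $\mathbf{pen}$ is designed to swallow. Everything else is a line-by-line transcription of the proof of Theorem~\ref{thmSingleModel}.
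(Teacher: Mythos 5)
Your proposal is correct and follows essentially the same route as the paper's proof: the same deterministic comparison from the two $\eta$-minimizer definitions, the same union bound over the $K^L$ cluster assignments inside each model and over $\mathcal{M}$ weighted by the Kraft weights $e^{-x_m}$, the same choice $y_{m'}^2=\theta^2\bigl((x+x_{m'})/n+\sigma_{m'}^2+L\log K'/n\bigr)$, Proposition~\ref{meynet} to keep $\mathbf{KL}$ on the left, absorption of the $\widehat{m}$-dependent remainder by $\mathbf{pen}(\widehat{m})$ via $\kappa>1+\kappa_0$, and tail integration producing $C_2\Sigma/n$. The only cosmetic differences are that the paper routes the argument through an auxiliary set $\mathcal{M}'$ of competitive models (shown to be almost surely finite so that the minimizer exists) rather than working directly with $\widehat{m}$, and that your $2L\log\widehat{K}$ should read $L\log\widehat{K}+L\log K(m)$ since the simplex bound is taken in the reference model $m$ --- harmless, as each term is swallowed by the corresponding penalty.
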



\begin{proof}
For any cluster assignment $(k_1,\ldots,k_L)$ of the observations within the model $m$, define $\overbar{f}^m \in \mathcal{F}_m$ such that:
\begin{equation}
\label{klbar}
\sum_{l = 1}^{L} n_l \mathbf{KL}(s_l, \overbar{f}_{k_l}^m) \leq \inf_{f^m\in\mathcal{F}_m} \sum_{l=1}^{L} n_l \mathbf{KL}(s_l, f_{k_l}^m) +\delta.
\end{equation}
Fix also $m \in \mathcal{M}$ such that $\sum_{l = 1}^{L} n_l \mathbf{KL}(s_l, \overbar{f}_{k_l}^m) < +\infty$ and define
\begin{equation*}
\begin{split}
\mathcal{M}' = \left\{m' \in \mathcal{M} \mid \gamma_n(\widehat{\pi}^{m'}, \widehat{f}^{m'})    + \mathbf{pen}(m') \right.
\leq \left.\gamma_n(\widehat{\pi}^m, \widehat{f}^m)    + \kappa_0(\mathfrak{D}_m + L\log(K) + x_m) + \eta'\right \}.
\end{split}
\end{equation*}
By definition of the estimator and since $-\log$ is decreasing,
\begin{flalign*}
\begin{split}
\gamma_n(\widehat{\pi}^m, \widehat{f}^m)  + \kappa_0(\mathfrak{D}_m + L\log(K) + x_m) + \eta' &\leq  \inf_{(\pi^m ,f^m )\in \Theta_{m}}     \gamma_n(\pi^m, f^m)  + \eta \\
&\quad+  \kappa_0(\mathfrak{D}_m + L\log(K) + x_m) + \eta'\\
&\leq \inf_{\Kl}\left\{\inf_{(\pi^m ,f^m )\in \Theta_{m}} \left(\sum_{l=1}^{L} -\log(\pi^m_{k_l}) \right.\right.\\
&\quad + \left.\left. \sum_{l=1}^{L}\sum_{i = 1}^{n_l} - \log   \left( \left( \frac{f_{k_l}^{m}}{s_l}  \right)  (X_l^i) \right) \right) \right\}\\
&\quad+ \eta+  \kappa_0(\mathfrak{D}_m + L\log(K) + x_m) + \eta'.
\end{split}
\end{flalign*}
\normalsize
Using the previous definition of $\nu_l$ in (\ref{defCentralized}) besides equation (\ref{klbar}) and by assumption on \textbf{pen}, 
\begin{flalign*}
\begin{split}
&\inf_{\Kl}\left(\inf_{(\pi^m ,f^m )\in \Theta_{m}} \left(\sum_{l=1}^{L} -\log(\pi^m_{k_l}) +  \sum_{l=1}^{L}\sum_{i = 1}^{n_l} - \log   \left( \left( \frac{f_{k_l}^{m}}{s_l}  \right)  (x_l^i) \right) \right)  \right)\\
&\quad+ \eta+  \kappa_0(\mathfrak{D}_m + L\log(K) + x_m) + \eta'\\
&\leq \inf_{\Kl}\left( \sum_{l=1}^{L} -\log(\frac{1}{K})  +  \sum_{l=1}^{L}\nu_l(\bar{f}^m_{k_l})  + \sum_{l = 1}^{L}n_l \mathbf{KL}(s_l, \bar{f}^m_{k_l}) \right)\\
&\quad+\eta+  \kappa_0(\mathfrak{D}_m + L\log(K) + x_m) + \eta'\\
&\leq L\log(K) + \inf_{\Kl}\left(  \sum_{l=1}^{L}\nu_l(\bar{f}^m_{k_l}) +  \inf_{f^m\in\mathcal{F}_m}\sum_{l = 1}^{L}n_l \mathbf{KL}(s_l, f_{k_l}^m) \right) + \delta\\
&\quad + \eta  + \eta' +  \kappa_0(\mathfrak{D}_m + L\log(K) + x_m)\\
&\leq   \inf_{\Kl}\left(  \sum_{l=1}^{L}\nu_l(\bar{f}^m_{k_l}) +  \inf_{f^m\in\mathcal{F}_m}\sum_{l = 1}^{L}n_l \mathbf{KL}(s_l, f_{k_l}^m) \right) + \delta + \eta  + \eta' + \mathbf{pen}(m).
\end{split}
\end{flalign*}
\normalsize
It follows that
\begin{equation*}
\begin{split}
\gamma_n(\widehat{\pi}^{m'}, \widehat{f}^{m'})      + \mathbf{pen}(m') 
&\leq   \inf_{\Kl}\left( \sum_{l=1}^{L}\nu_l(\bar{f}^m_{k_l}) +  \inf_{f^m\in\mathcal{F}_m}\sum_{l = 1}^{L}n_l \mathbf{KL}(s_l, f_{k_l}^m) \right) + \delta \\
&\quad+ \eta  + \eta' + \mathbf{pen}(m)
\end{split}
\end{equation*}
\normalsize
and on the other hand, 
\begin{equation*}
\begin{split}
\gamma_n(\widehat{\pi}^{m'}, \widehat{f}^{m'}) + \mathbf{pen}(m')&\geq 
\sum_{l = 1}^{L}-\log     \left(   K'\widehat{\pi}_{\widehat{k_l}}^{m'} \left(\prod_{i = 1}^{n_l} \left( \frac{\widehat{f}_{k_l}^{m'}}{s_l}\right)  (x_l^i) \right)   \right)   + \mathbf{pen}(m')\\
&\geq \sum_{l = 1}^{L}-\log     \left(   K'\left(\prod_{i = 1}^{n_l} \left( \frac{\widehat{f}_{k_l}^{m'}}{s_l}\right)  (x_l^i) \right)   \right)   + \mathbf{pen}(m')\\
&\geq -L\log( K') + \sum_{l = 1}^{L}  \nu_l(\widehat{f}_{\widehat{k_l}}^{m'})      +  \sum_{l = 1}^{L}  n_l \mathbf{KL}(s_l, \widehat{f}_{\widehat{k_l}}^{m'})+ \mathbf{pen}(m'),
\end{split}
\end{equation*}
where $K'$ denotes the number of clusters given in model $m'$. Thus, 
\begin{equation*}
\begin{split}
\sum_{l =1}^{L}  n_l \mathbf{KL}(s_l, \widehat{f}_{\widehat{k_l}}^{m'}) &\leq L\log( K') - \sum_{l = 1}^{L}  \nu_l(\widehat{f}_{\widehat{k_l}}^{m'})      - \mathbf{pen}(m')\\
&\quad+\inf_{\Kl}\left(  \sum_{l=1}^{L}\nu_l(\bar{f}^m_{k_l}) +  \inf_{f^m\in\mathcal{F}_m}\sum_{l = 1}^{L}n_l \mathbf{KL}(s_l, f_{k_l}^m) \right) + \delta \\
&\quad+ \eta  + \eta' + \mathbf{pen}(m)  
\end{split}
\end{equation*}
\normalsize
It remains to get an upper bound of the deviation $-\sum_{l = 1}^{L}\nu_l(\widehat{f_{\widehat{k_l}}^{m'}})$. Using Lemmas \ref{markovIneq} and \ref{lemmaConc}, except on a set of probability less than $e^{-x_m'-x}$, for any $y_{m'}> \sigma_{m'}$, 
\begin{equation*}
\begin{split}
\frac{1}{n}\sum_{l = 1}^{L}    \frac{-\nu_l(\widehat{f}_{\widehat{k}_l}^{m'})}{y_{m'}^2  +      \kappa'_0\frac{1}{n}\sum_{l=1}^{L}n_l E   \left[   \mid    \log\left(\frac{\widehat{f}_{\widehat{k}_l}^{m'}}{s_{l}}\right) \mid  ^2 \right]  }    
&\leq  \frac{ L \log(K') } {\lambda} +    \frac{\kappa'_1\sigma_{m'}}{y_{m'}} +   \frac{ \kappa'_2} {(ny_{m'}^2)^{1/2}}       (x + x_{m'})^{1/2}\\
&\quad +\left( \frac{9\tau_n}{ny_{m'}^2}   \right)( x + x_{m'}) .
\end{split}
\end{equation*}
This time we choose $\lambda$ as $\lambda_m = ny_{m'}^2>0$ and $y_{m'} = \theta (\frac{x+x_{m'}}{n} + \sigma_{m'}^2 + \frac{L\log(K')}{n})^{1/2}$, with $\theta>1$ to be explicited later on. We deduce that except on a set of probability less than $e^{-x_m'-x}$, for any $y_{m'}> \sigma_{m'}$,
\begin{equation*}
\begin{split}
    \frac{1}{n}\sum_{l = 1}^{L}    \frac{-\nu_l(\widehat{f}_{\widehat{k}_l}^{m'})}{y_{m'}^2  +      \kappa'_0\frac{1}{n}\sum_{l=1}^{L}n_l E   \left[   \mid    \log\left(\frac{\widehat{f}_{\widehat{k}_l}^{m'}}{s_{l}}\right) \mid  ^2 \right]  }   
\leq  \left(    \frac{9\tau_n+1}{\theta ^2} +\frac{\kappa'_1 + \kappa'_2}{\theta} \right).
\end{split}
\end{equation*}
Now, we use Kraft condition \ref{hypKraft}, and conclude that if we make a proper choice of $y_{m'}$ for all models $m'\in \mathcal{M}'$, this property holds simultaneously on $\mathcal{M}'$ except on a set of probability less than $e^{-x}\Sigma$. Therefore, except on that set, for all $m' \in \mathcal{M}'$, 
\begin{equation*}
\begin{split}
\frac{1}{n}\sum_{l =1}^{L}  n_l \mathbf{KL}(s_l, \widehat{f}_{\widehat{k_l}}^{m'} ) &\leq  \frac{ L\log( K') }{n} +\left(y_{m'}^2  +      \kappa'_0\frac{1}{n}\sum_{l=1}^{L}n_l E   \left[   \mid    \log\left(\frac{\widehat{f}_{\widehat{k}_l}^{m'}}{s_{l}}\right) \mid  ^2 \right] \right)  \left(    \frac{9\tau_n+1}{\theta ^2} +\frac{\kappa'_1 + \kappa'_2}{\theta} \right) \\
&\quad - \frac{\mathbf{pen}(m')}{n}  +\inf_{\Kl}\left( \frac{1}{n}   \sum_{l=1}^{L}\nu_l(\bar{f}^m_{k_l})  +\inf_{f^m\in\mathcal{F}_m}\sum_{l = 1}^{L}n_l \mathbf{KL}(s_l, f_{k_l}^m)\right) \\
&\quad+ \frac{\delta}{n}  + \frac{\eta  + \eta' }{n} + \frac{\mathbf{pen}(m)}{n}.
\end{split}
\end{equation*}
\normalsize
Define $C_{\tau_n} = \frac{e^{-\tau_n}+\tau_n -1}{\tau_n^2}$ and choose $\epsilon_{pen}>0$ such that $  \left( \frac{9\tau_n +1}{\theta_{pen} ^2}+\frac{\kappa'_1 + \kappa'_2}{\theta_{pen}} \right)\kappa_0' = C_{\tau_n} \epsilon_{pen}$. We obtain
\begin{equation*}
\begin{split}
\frac{1}{n}\sum_{l =1}^{L}  n_l \mathbf{KL}(s_l, \widehat{f}_{\widehat{k_l}}^{m'} ) &\leq  \frac{ L\log( K') }{n} +\frac{y_{m'}^2 C_{\tau_n}\epsilon_{pen}}{\kappa_0'} + \frac{C_{\tau_n}\epsilon_{pen}}{n}\sum_{l=1}^{L}n_l E   \left[   \mid    \log\left(\frac{\widehat{f}_{\widehat{k}_l}^{m'}}{s_{l}}\right) \mid  ^2 \right] \\
&\quad - \frac{\mathbf{pen}(m')}{n}  +\inf_{\Kl}\left( \frac{1}{n}   \sum_{l=1}^{L}\nu_l(\bar{f}^m_{k_l})  +\inf_{f^m\in\mathcal{F}_m}\sum_{l = 1}^{L}n_l \mathbf{KL}(s_l, f_{k_l}^m)\right)\\
&\quad+ \frac{\delta}{n}  + \frac{\eta  + \eta' }{n} + \frac{\mathbf{pen}(m)}{n}.
\end{split}
\end{equation*}
\normalsize
Then, by using Proposition \ref{meynet}, simultaneously for any $m'\in \mathcal{M}'$ and except on a set of probability less than $e^{-x}\Sigma$, 
\begin{equation}
\label{eqPen}
\begin{split}
(1- \epsilon_{pen})\frac{1}{n}\sum_{l =1}^{L}  n_l \mathbf{KL}(s_l, \widehat{f}_{\widehat{k_l}}^{m'}) &\leq \frac{ L\log( K') }{n} +\frac{y_{m'}^2 C_{\tau_n}\epsilon_{pen}}{\kappa_0'} - \frac{\mathbf{pen}(m')}{n}   \\
&\quad +\inf_{\Kl}\left( \frac{1}{n}   \sum_{l=1}^{L}\nu_l(\bar{f}^m_{k_l})  +\inf_{f^m\in\mathcal{F}_m}\sum_{l = 1}^{L}n_l \mathbf{KL}(s_l, f_{k_l}^m)\right)\\
&\quad+ \frac{\delta}{n}  + \frac{\eta  + \eta' }{n} + \frac{\mathbf{pen}(m)}{n}.
\end{split}
\end{equation}
Let now study the term $ \frac{ L\log( K') }{n} +  \frac{y_{m'}^2 C_{\tau_n}\epsilon_{pen}}{\kappa_0'} - \frac{\mathbf{pen}(m') }{n}$. Define $\kappa_0 = \frac{\theta_{pen}^2 C_{\tau_n}\epsilon_{pen}}{\kappa_0'} $. We have 
\begin{equation*}
\begin{split}
\frac{ L\log( K') }{n} + \frac{y_{m'}^2 C_{\tau_n}\epsilon_{pen}}{\kappa_0'}  - \frac{\mathbf{pen}(m')}{n} &=  \frac{ L\log( K') }{n}+ \kappa_0 \left(\frac{x+x_{m'}}{n} + \sigma_{m'}^2 + \frac{L\log(K')}{n}\right) - \frac{\mathbf{pen}(m')}{n}\\
&\leq \kappa_0 \frac{x}{n} + (1 +  \kappa_0) \left(\frac{x_{m'}}{n} + \sigma_{m'}^2 + \frac{L\log(K')}{n}\right) - \frac{\mathbf{pen}(m')}{n}\\
&\leq \kappa_0 \frac{x}{n}  - (1 - \frac{1 + \kappa_0}{\kappa})\frac{\mathbf{pen}(m')}{n}.
\end{split}
\end{equation*}
Thus, based on equation (\ref{eqPen}), except on a set of probability less than $e^{-x}\Sigma$, simultaneously for any $m'\in \mathcal{M}'$, 
\begin{equation*}
\begin{split}
(1- \epsilon_{pen})\frac{1}{n}\sum_{l =1}^{L}  n_l \mathbf{KL}(s_l, \widehat{f}_{\widehat{k_l}}^{m'} ) + (1 - \frac{1 + \kappa_0}{\kappa})\frac{\mathbf{pen}(m')}{n}
 -\inf_{\Kl}\left(  \frac{1}{n}  \sum_{l=1}^{L}\nu_l(\bar{f}^m_{k_l})  + \inf_{f^m\in\mathcal{F}_m}\sum_{l = 1}^{L}n_l \mathbf{KL}(s_l, f_{k_l}^m) \right) \\
\leq   \frac{\delta}{n} + \frac{\eta  + \eta' }{n}+ \frac{\mathbf{pen}(m)}{n} +  \frac{\kappa_0x}{n} .
\end{split}
\end{equation*}
Since $ \frac{1}{n}  \sum_{l=1}^{L}\nu_l(\bar{f}^m_{k_l}) $ is integrable (with null expectation), we deduce that $M = \sup_{m' \in \mathcal{M}'}\frac{\mathbf{pen}(m')}{n} $ is almost surely finite. By definition of the mapping $\mathbf{pen}$, $\kappa \frac{x_{m'}}{n}\leq M$ for all $m'\in\mathcal{M}'$. Therefore, 
\begin{equation*}
\Sigma \geq \sum_{m'\in\mathcal{M}'} e^{-x_{m'}} \geq \mid \mathcal{M}'\mid e^{-\frac{Mn}{\kappa}},
\end{equation*}
and $\mathcal{M}'$ is almost surely finite. Thus, for all fixed $m\in \mathcal{M}$ fixed, a minimizer $\widehat{m}$ over $\mathcal{M}'$ of 
\begin{equation*}
\mathbf{crit}(m') =  \gamma_n(\widehat{\pi}^{m'}, \widehat{f}^{m'})    + \mathbf{pen}(m') 
\end{equation*}
exists. For this minimizer, with probability greater than $1 - e^{-x}\Sigma$, one has
\begin{equation*}
\begin{split}
(1- \epsilon_{pen})\frac{1}{n}\sum_{l =1}^{L}  n_l \mathbf{KL}(s_l, \widehat{f}_{\widehat{k_l}}^{\widehat{m}} ) + (1 - \frac{1 + \kappa_0}{\kappa})\frac{\mathbf{pen}(\widehat{m})}{n}
-\inf_{\Kl}\left\{  \frac{1}{n}  \sum_{l=1}^{L}\nu_l(\bar{f}^m_{k_l})  + \inf_{f^m\in\mathcal{F}_m}\sum_{l = 1}^{L}n_l \mathbf{KL}(s_l, f_{k_l}^m)\right\} \\
\leq  \frac{\delta}{n}+ \frac{\eta  + \eta' }{n}+  \frac{\mathbf{pen}(m)}{n} + \frac{\kappa_0x}{n}.
\end{split}
\end{equation*}
Using the same integration technique than in the previous theorem, 
\begin{equation*}
\begin{split}
E\left[\frac{1}{n}\sum_{l =1}^{L}  n_l \mathbf{KL}(s_l, \widehat{f}_{\widehat{k_l}}^{\widehat{m}} )+ \frac{(1 - \frac{1 + \kappa_0}{\kappa})}{1- \epsilon_{pen}}\frac{\mathbf{pen}(\widehat{m})}{n} \right]
&\leq \frac{1}{1- \epsilon_{pen}}\left( \inf_{\Kl}\left( \inf_{f^m \in\mathcal{F}_m}\sum_{l = 1}^{L}n_l \mathbf{KL}(s_l, f_{k_l}^m) \right)\right. \\
&\quad+ \left.\frac{\delta}{n}+  \frac{\eta  + \eta' }{n}+ \frac{\mathbf{pen}(m)}{n}  + \frac{\kappa_0\Sigma}{n}\right) , 
\end{split}
\end{equation*}
and since $\delta$ can be artbitrary small, 
\begin{equation*}
\begin{split}
E\left[\frac{1}{n}\sum_{l =1}^{L}  n_l \mathbf{KL}(s_l, \widehat{f}_{\widehat{k_l}}^{\widehat{m}} )+ \frac{(1 - \frac{1 + \kappa_0}{\kappa})}{1- \epsilon_{pen}}\frac{\mathbf{pen}(\widehat{m})}{n} \right]
&\leq \frac{1}{1- \epsilon_{pen}}\left( \inf_{\Kl}\left( \inf_{f^m \in\mathcal{F}_m}\sum_{l = 1}^{L}n_l \mathbf{KL}(s_l, f_{k_l}^m) \right)\right. \\
&\quad+ \left.  \frac{\eta  + \eta' }{n}+ \frac{\mathbf{pen}(m)}{n}  + \frac{\kappa_0\Sigma}{n}\right).
\end{split}
\end{equation*}
This inequality is true for all $m\in\mathcal{M}$. Therefore,
\begin{equation*}
\begin{split}
E\left[\frac{1}{n}\sum_{l =1}^{L}  n_l \mathbf{KL}(s_l, \widehat{f}_{\widehat{k_l}}^{\widehat{m}} )+ \frac{(1 - \frac{1 + \kappa_0}{\kappa})}{1- \epsilon_{pen}}\frac{\mathbf{pen}(\widehat{m})}{n} \right]\\
\leq \frac{1}{1- \epsilon_{pen}}   \inf_{m\in \mathcal{M}} \left(\inf_{\Kl}\left( \inf_{f^m\in\mathcal{F}_m}\sum_{l = 1}^{L}n_l \mathbf{KL}(s_l, f_{k_l}^m) \right)  +\frac{\mathbf{pen}(m)}{n}  \right) 
+ \frac{\kappa_0}{1- \epsilon_{pen}} \frac{\Sigma}{n} + \frac{\eta  + \eta' }{n},
\end{split}
\end{equation*}
and by taking $C_1 = \frac{1}{1-\epsilon_{pen}}$ and $C_2 = \frac{\kappa_0}{1-\epsilon_{pen}}$, we deduce an inequality stronger than the result stated in the theorem because the penalty appears with a positive coefficient on the left-side.
\end{proof}

\subsection{Proofs of Theorems 1 and 2 }

\subsubsection{Bracketing Entropy}
A number of lemmas concerning entropy with bracketing is provided in this paragraph. These results are intended to compute the bracketing entropy of the $K$-product set $\mathcal{F}_K$ with respect to $\mathbf{a}$, based on simpler sets of functions. Let first introduce two other metrics. For $f_k\in\mathcal{F}$, we denote by $\lVert . \lVert_{\infty}$ the $L_\infty$-norm:
\begin{equation*}
\lVert f_k\lVert_\infty = \max_{1 \leq b \leq B} \mid f_k(b)\mid 
\end{equation*}
For $f,\tilde{f}\in \mathcal{F}_K$ and $(k_1,\ldots,k_L) \in \Iintv{1,K}^L$, let also $\mathbf{d}_\infty^2$ be the divergence defined by:
\begin{equation*}
\mathbf{d}_\infty^2(\tilde{f}, f) = \frac{1}{n} \sum_{l=1}^{L}n_l \lVert \log\left( \frac{\tilde{f}_{k_l}}{f_{k_l}}\right)\lVert_{\infty}^2.
\end{equation*}

\begin{lemma}
\label{lemProd}
Let $(\delta_1,\ldots,\delta_K)$ be a family of positive numbers. Then,
\begin{equation*}
H_{[.], \mathbf{d}_\infty^2}(\max_{k = 1,... , K} \delta_k^2, \mathcal{F}_{K} ) \leq  \sum_{k = 1}^{K}      H_{[.], \mathbf{d}_\infty^2}(\delta_k^2, \mathcal{F} ) 
\end{equation*}
\end{lemma}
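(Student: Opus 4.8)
The plan is to exploit the product structure $\mathcal{F}_K = \mathcal{F}^K$: a bracketing of $\mathcal{F}_K$ can be assembled coordinate by coordinate from bracketings of $\mathcal{F}$, and the $\mathbf{d}_\infty^2$-width of the assembled bracket turns out to be a \emph{convex combination} of the coordinate widths, hence at most their maximum.

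First I would fix a cluster assignment $(k_1,\ldots,k_L)\in\Iintv{1,K}^L$ (on which $\mathbf{d}_\infty^2$ depends) and put $w_k = \frac1n\sum_{l\,:\,k_l=k} n_l$, so that $w_k\ge 0$, $\sum_{k=1}^K w_k = 1$, and for all $\tilde f, f\in\mathcal{F}_K$
\[
\mathbf{d}_\infty^2(\tilde f, f) = \sum_{k=1}^K w_k \,\bigl\lVert \log(\tilde f_k/f_k)\bigr\rVert_\infty^2 .
\]
Next, for each $k$ I would take a minimal covering of $\mathcal{F}$ by brackets $[g^{(k),-}_j, g^{(k),+}_j]$, $j\in J_k$, of $\mathbf{d}_\infty^2$-width at most $\delta_k^2$ (i.e.\ $\lVert\log(g^{(k),+}_j/g^{(k),-}_j)\rVert_\infty^2\le\delta_k^2$, which is the natural single-function specialization of $\mathbf{d}_\infty^2$), so that $\log\lvert J_k\rvert = H_{[.],\mathbf{d}_\infty^2}(\delta_k^2,\mathcal{F})$. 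For each multi-index $(j_1,\ldots,j_K)\in\prod_k J_k$ I form the bracket $[f^-,f^+]$ of $\mathcal{F}_K$ with $f^\pm_k = g^{(k),\pm}_{j_k}$; the pointwise order $f^-_k\le f^+_k$ holds coordinatewise so this is a genuine bracket, and every $f=(f_1,\ldots,f_K)\in\mathcal{F}^K$ lies in one of them. This gives a covering of $\mathcal{F}_K$ of cardinality $\prod_k\lvert J_k\rvert$, and each of its brackets has width
\[
\mathbf{d}_\infty^2(f^-,f^+) = \sum_{k=1}^K w_k\,\bigl\lVert\log(g^{(k),-}_{j_k}/g^{(k),+}_{j_k})\bigr\rVert_\infty^2 \le \sum_{k=1}^K w_k\,\delta_k^2 \le \Bigl(\max_{1\le k\le K}\delta_k^2\Bigr)\sum_{k=1}^K w_k = \max_{1\le k\le K}\delta_k^2 .
\]
Taking logarithms of the cardinalities then yields $H_{[.],\mathbf{d}_\infty^2}(\max_k\delta_k^2,\mathcal{F}_K)\le\log\prod_k\lvert J_k\rvert = \sum_k H_{[.],\mathbf{d}_\infty^2}(\delta_k^2,\mathcal{F})$, which is the claim.

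The argument is elementary, so there is no genuine obstacle; the single point worth care is recognising that the product bracket's width is the convex combination $\sum_k w_k\delta_k^2$ — not the sum $\sum_k\delta_k^2$ — which is exactly what makes $\max_k\delta_k^2$ (rather than $\sum_k\delta_k^2$) appear on the left-hand side. One should also observe that the resulting bound does not depend on the chosen cluster assignment $(k_l)_l$, so it holds for $\mathbf{d}_\infty^2$ uniformly; the same coordinatewise assembly will be reused later to pass from $\mathbf{d}_\infty^2$ to $\mathbf{a}$ via a metric-to-bracketing comparison.
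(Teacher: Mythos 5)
Your proof is correct and follows essentially the same route as the paper: assemble product brackets coordinatewise and observe that, since $\frac{1}{n}\sum_l n_l = 1$, the $\mathbf{d}_\infty^2$-width of the product bracket is a convex combination of the coordinate widths and hence bounded by $\max_k \delta_k^2$. Your version is slightly more explicit than the paper's (which writes the width directly as $\frac{1}{n}\sum_l n_l \lVert\log(f^-_{k_l}/f^+_{k_l})\rVert_\infty^2$ and bounds it termwise), but the argument is the same.
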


\begin{proof}
For all $k \in \Iintv{1,K}$, let $\delta_k > 0$ and $[f_k^-, f_k^+]$ a bracket of $\mathbf{d}_\infty^2$-diameter less than $\delta_k^2$ in $ \mathcal{F}$. Then, 
\begin{equation*}
\mathbf{d}_\infty^2( (f_k^-)_{k=1,\ldots,K},  (f_k^+)_{k=1,\ldots,K} ) = \frac{1}{n} \sum_{l=1}^{L}n_l \lVert \log\left( \frac{f_{k_l}^-}{f_{k_l}^+}\right)\lVert_{\infty}^2 \leq \max_{k \in \Iintv{1,K}} \delta_k^2.
\end{equation*}
Therefore, by covering all $\mathcal{F}$ with a number of brackets $N_k$ of width less than $\delta_k^2$, we can cover $\mathcal{F}_K $ with $\prod_{k} N_k$ brackets, which leads to the result.
\end{proof}

\begin{lemma}
\label{lemInfty}
Let $\epsilon > 0$. Then  
\begin{equation*}
H_{[.], \mathbf{d}_\infty^2}(\epsilon^2, \mathcal{F}) \leq    H_{[.], \lVert.\lVert_\infty}(\epsilon, [-\tau_n, 0]^B). 
\end{equation*}
\end{lemma}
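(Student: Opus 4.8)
The plan is to transport, through the coordinatewise logarithm, a sup-norm bracketing of the cube $[-\tau_n,0]^B$ back to a $\mathbf{d}_\infty^2$-bracketing of $\mathcal{F}$. First I would record two elementary facts. Since every $f\in\mathcal{F}$ satisfies $e^{-\tau_n}\le f(b)\le 1$ for all $b$, the coordinatewise map $\Phi\colon f\mapsto(\log f(1),\ldots,\log f(B))$ sends $\mathcal{F}$ into $[-\tau_n,0]^B$. Moreover $\mathcal{F}$ consists of single densities, so the cluster assignment is trivial ($k_l=1$ and $\sum_l n_l=n$) and the divergence reduces to $\mathbf{d}_\infty^2(\tilde f,f)=\lVert\log(\tilde f/f)\lVert_\infty^2$ for any positive functions $\tilde f,f$ on $\{1,\ldots,B\}$; this will be applied below to bracket endpoints that need not be probability vectors, which is legitimate since the definition of a bracket requires only real-valued endpoints.

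If $H_{[.],\lVert.\lVert_\infty}(\epsilon,[-\tau_n,0]^B)$ is infinite there is nothing to prove, so assume it is finite and fix a minimal covering of $[-\tau_n,0]^B$ by sup-norm brackets $[\phi_1^-,\phi_1^+],\ldots,[\phi_N^-,\phi_N^+]$, with $N=\exp\big(H_{[.],\lVert.\lVert_\infty}(\epsilon,[-\tau_n,0]^B)\big)$ and $\lVert\phi_j^+-\phi_j^-\lVert_\infty<\epsilon$ for each $j$. I would then set $f_j^-=\exp(\phi_j^-)$ and $f_j^+=\exp(\phi_j^+)$ coordinatewise. Since $\exp$ is increasing, $0<f_j^-\le f_j^+$, so each $[f_j^-,f_j^+]$ is a genuine bracket of functions on $\{1,\ldots,B\}$.

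It then remains to check coverage and width. For coverage: given $f\in\mathcal{F}$, the point $\Phi(f)$ lies in some bracket $[\phi_j^-,\phi_j^+]$, i.e.\ $\phi_j^-(b)\le\log f(b)\le\phi_j^+(b)$ for every $b$, and exponentiating yields $f_j^-(b)\le f(b)\le f_j^+(b)$, so $f$ belongs to $[f_j^-,f_j^+]$. For the width: $\log(f_j^-(b)/f_j^+(b))=\phi_j^-(b)-\phi_j^+(b)$, hence $\mathbf{d}_\infty^2(f_j^-,f_j^+)=\lVert\phi_j^+-\phi_j^-\lVert_\infty^2<\epsilon^2$. Thus $\mathcal{F}$ is covered by $N$ brackets of $\mathbf{d}_\infty^2$-width less than $\epsilon^2$, which gives $H_{[.],\mathbf{d}_\infty^2}(\epsilon^2,\mathcal{F})\le\log N=H_{[.],\lVert.\lVert_\infty}(\epsilon,[-\tau_n,0]^B)$.

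The proof is essentially a change of variables and has no real obstacle; the only subtleties are the harmless observations that bracket endpoints need not belong to $\mathcal{F}$ (so the non-normalized functions $e^{\phi_j^\pm}$ are admissible) and that the squared width $\epsilon^2$ on the $\mathbf{d}_\infty^2$ side matches exactly the unsquared width $\epsilon$ on the $\lVert.\lVert_\infty$ side, the squaring being absorbed because $\mathbf{d}_\infty^2$ is defined as a squared quantity.
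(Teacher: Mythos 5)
Your proof is correct and follows essentially the same route as the paper's: both arguments transport a sup-norm bracketing of the cube $[-\tau_n,0]^B$ to a $\mathbf{d}_\infty^2$-bracketing of $\mathcal{F}$ via the coordinatewise exponential, with the squared width matching because $\mathbf{d}_\infty^2$ is itself a squared quantity. Your version is slightly more careful about the side points (the trivial reduction of $\mathbf{d}_\infty^2$ on single densities, the admissibility of non-normalized bracket endpoints, and the infinite-entropy case), but the core change-of-variables argument is identical.
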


\begin{proof}
By definition, $\mathcal{F} = \mathbb{S}_{B-1} \cap [e^{-\tau_n}, 1 ]^B$. Let $\mathcal{B}_k(\epsilon)$ be a set of brackets of $\lVert . \lVert_{\infty}$-width less than $\epsilon$ covering $ [-\tau_n, 0]^B$. Let $f_k\in\mathcal{F}$. Then, $\log(f_k)\in[-\tau_n, 0]^B$ and there exists a bracket $[u_k^-, u_k^+] \in \mathcal{B}_k(\epsilon)$ such that $u_k^-\leq \log(f_k) \leq u_k^+$ with $\lVert u_k^- - u_k^+\lVert_\infty \leq \epsilon $. We can rewrite $u_k^-$ and $u_k^+$ as $u_k^- = \log(v_k^-)$, $u_k^+ = \log(v_k^+)$ respectively. This leads to a bracket $[v_k^-, v_k^+]$ of width less than $\epsilon^2$ with respect to $\mathbf{d}_\infty^2$. Therefore, an $\epsilon-\lVert.\lVert_\infty$-covering of $[-\tau_n, 0]^B$ induces an $\epsilon^2-\mathbf{d}_\infty^2$-covering of $\mathcal{F}$ so we can conclude.
\end{proof} 

The following result is inspired by Lemma 2 from \cite{GenWas2000}.

\begin{lemma}
\label{lemValue}
Let $\epsilon >0$. Then
\begin{equation*}
H_{[.], \lVert.\lVert_\infty}(\epsilon, [-\tau_n, 0]^B) \leq B\log(2) + B  \left(\log\left( \frac{\tau_n}{\epsilon}\right) \right)_+
\end{equation*}
\end{lemma}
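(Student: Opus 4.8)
The plan is to reduce the problem to a one-dimensional covering of the interval $[-\tau_n,0]$ and then form brackets of the cube as products. Fix $\epsilon>0$. If $\tau_n\le\epsilon$, then the single bracket $\bigl[(-\tau_n,\ldots,-\tau_n),(0,\ldots,0)\bigr]$ has $\lVert\cdot\rVert_\infty$-width $\tau_n\le\epsilon$ and contains every element of $[-\tau_n,0]^B$, so the bracketing entropy is $0$ and the claimed inequality holds since its right-hand side is nonnegative. I would therefore assume from now on that $\tau_n>\epsilon$.

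First I would partition $[-\tau_n,0]$ into $N=\lceil \tau_n/\epsilon\rceil$ consecutive subintervals $I_1,\ldots,I_N$, each of length at most $\epsilon$ (for instance all equal to $\tau_n/N\le\epsilon$). Writing $I_j=[a_j,b_j]$, every $t\in[-\tau_n,0]$ lies in some $I_j$ with $a_j\le t\le b_j$ and $b_j-a_j\le\epsilon$; this is a family of $N$ one-dimensional brackets of $|\cdot|$-width at most $\epsilon$ covering $[-\tau_n,0]$. Next I would lift this to the cube: for each map $\sigma:\{1,\ldots,B\}\to\{1,\ldots,N\}$ set $u^-_\sigma=(a_{\sigma(1)},\ldots,a_{\sigma(B)})$ and $u^+_\sigma=(b_{\sigma(1)},\ldots,b_{\sigma(B)})$. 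Then $[u^-_\sigma,u^+_\sigma]$ is a bracket whose $\lVert\cdot\rVert_\infty$-width equals $\max_{1\le b\le B}(b_{\sigma(b)}-a_{\sigma(b)})\le\epsilon$, and any $x=(x_1,\ldots,x_B)\in[-\tau_n,0]^B$ belongs to $[u^-_\sigma,u^+_\sigma]$ once $\sigma(b)$ is chosen so that $x_b\in I_{\sigma(b)}$. Hence these $N^B$ product brackets cover $[-\tau_n,0]^B$, and $H_{[.], \lVert.\lVert_\infty}(\epsilon, [-\tau_n, 0]^B)\le B\log N=B\log\lceil\tau_n/\epsilon\rceil$.

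Finally, since $\tau_n/\epsilon>1$, I would bound $\lceil\tau_n/\epsilon\rceil\le \tau_n/\epsilon+1<2\tau_n/\epsilon$, whence $\log\lceil\tau_n/\epsilon\rceil<\log 2+\log(\tau_n/\epsilon)=\log 2+\bigl(\log(\tau_n/\epsilon)\bigr)_+$; multiplying by $B$ and combining with the trivial case above yields the announced bound. The argument is entirely elementary: the only points needing a little care are the case split $\tau_n\le\epsilon$ versus $\tau_n>\epsilon$ (which is what makes the constant $B\log 2$ and the positive-part truncation appear), and checking that passing to product brackets does not inflate the $\lVert\cdot\rVert_\infty$-width beyond $\epsilon$, which holds precisely because the $\lVert\cdot\rVert_\infty$-width of a product bracket is the maximum of the coordinatewise widths.
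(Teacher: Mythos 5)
Your proposal is correct and follows essentially the same route as the paper: both cover the cube $[-\tau_n,0]^B$ by a grid of axis-parallel boxes of side at most $\epsilon$, use opposite corners as brackets of $\lVert\cdot\rVert_\infty$-width at most $\epsilon$, and bound the count by $(1+\tau_n/\epsilon)^B\le 2^B\max(1,\tau_n/\epsilon)^B$. Your explicit case split $\tau_n\le\epsilon$ versus $\tau_n>\epsilon$ just makes transparent where the $B\log 2$ and the positive part come from, which the paper handles in one line via the $\max$.
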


\begin{proof}
Divide the cube $[-\tau_n, 0]^B$ of $\mathbb{R}^B$ into a number of $N$ disjoint cubes with sides parallels to the axes and of length $\epsilon$. For one cube, let $x_1$ the closest vertex from 0, and $y_1$ the furthest vertex from 0. We have $\max_{b \in\Iintv{1,B}}\mid x_1(b) - y_1(b)\mid  \leq  \epsilon $. Thus, the family of vertices $\{(x_1, y_1), ..., (x_N, y_N)\}$ forms an $\epsilon-\lVert.\lVert_\infty$ bracketing of $[-\tau_n, 0]^B$. Clearly, we have 
\begin{equation*}
N \leq \left( 1 + \frac{\tau_n}{\epsilon} \right)^B \leq \max \left( 2^B, \left(\frac{2\tau_n}{\epsilon}\right)^B   \right) \leq 2^B\max\left(1, \frac{\tau_n}{\epsilon}\right)^B.
\end{equation*}
\end{proof}

\begin{proposition}[\textbf{Bracketing entropy of }$\mathbf{\mathcal{F}_K}$]
\label{brackFm}
For all $K\in\mathcal{M}$ and all $\delta \in (0, 1]$, we have
\begin{equation*}
H_{[.], \mathbf{a}}(\delta, \mathcal{F}_K) \leq K B \left( \log(2\tau_n) + \log\left(   \frac{1}{\delta}  \right) \right).
\end{equation*}
\end{proposition}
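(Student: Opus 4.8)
The plan is to reduce the bracketing entropy of $\mathcal{F}_K$ with respect to the divergence $\mathbf{a}$ to an elementary $L_\infty$-covering of a cube in $\mathbb{R}^B$, by chaining the three preceding lemmas.

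\emph{Step 1: from $\mathbf{a}$ to $\mathbf{d}_\infty^2$.} First I would observe that $\mathbf{a}$ is pointwise dominated by $\mathbf{d}_\infty^2$: for any pair ${f}^-, {f}^+ \in \mathcal{F}_K$ forming a family of brackets and any cluster assignment $(k_1,\ldots,k_L)$, one has $E_{s_l}\bigl[\,|\log({f}^-_{k_l}/{f}^+_{k_l})|^2\,\bigr] \leq \lVert \log({f}^-_{k_l}/{f}^+_{k_l})\rVert_\infty^2$ for every $l$, hence $\mathbf{a}({f}^-,{f}^+) \leq \mathbf{d}_\infty^2({f}^-,{f}^+)$. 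Consequently any covering of $\mathcal{F}_K$ by brackets of $\mathbf{d}_\infty^2$-width at most $\delta$ is also a covering by brackets of $\mathbf{a}$-width at most $\delta$, so $H_{[.], \mathbf{a}}(\delta, \mathcal{F}_K) \leq H_{[.], \mathbf{d}_\infty^2}(\delta, \mathcal{F}_K)$.

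\emph{Step 2: chaining the lemmas.} Applying Lemma~\ref{lemProd} with $\delta_1 = \cdots = \delta_K = \sqrt{\delta}$ yields $H_{[.], \mathbf{d}_\infty^2}(\delta, \mathcal{F}_K) \leq K\, H_{[.], \mathbf{d}_\infty^2}(\delta, \mathcal{F})$. Lemma~\ref{lemInfty} with $\epsilon = \sqrt{\delta}$ then gives $H_{[.], \mathbf{d}_\infty^2}(\delta, \mathcal{F}) \leq H_{[.], \lVert.\rVert_\infty}(\sqrt{\delta}, [-\tau_n,0]^B)$, and Lemma~\ref{lemValue} with $\epsilon = \sqrt{\delta}$ gives $H_{[.], \lVert.\rVert_\infty}(\sqrt{\delta}, [-\tau_n,0]^B) \leq B\log(2) + B\bigl(\log(\tau_n/\sqrt{\delta})\bigr)_+$. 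Combining the three bounds,
\begin{equation*}
H_{[.], \mathbf{a}}(\delta, \mathcal{F}_K) \leq KB\log(2) + KB\Bigl(\log \tau_n + \tfrac{1}{2}\log\tfrac{1}{\delta}\Bigr)_+ .
\end{equation*}

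\emph{Step 3: arithmetic.} Since $\delta \in (0,1]$ we have $\log(1/\delta) \geq 0$, so $\bigl(\log\tau_n + \tfrac12\log(1/\delta)\bigr)_+ \leq (\log\tau_n)_+ + \tfrac12\log(1/\delta) \leq \log\tau_n + \log(1/\delta)$, the last step using $\tau_n = \log n \geq 1$ and $\tfrac12 \log(1/\delta) \leq \log(1/\delta)$. Therefore
\begin{equation*}
H_{[.], \mathbf{a}}(\delta, \mathcal{F}_K) \leq KB\bigl(\log(2) + \log\tau_n + \log(1/\delta)\bigr) = KB\Bigl(\log(2\tau_n) + \log\tfrac{1}{\delta}\Bigr),
\end{equation*}
which is the claimed inequality. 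None of the steps poses a genuine difficulty; the only care required is the bookkeeping between the width conventions (the quadratic divergences $\mathbf{a}$ and $\mathbf{d}_\infty^2$ carry a squared radius $\delta$ whereas $\lVert.\rVert_\infty$ carries radius $\sqrt{\delta}$) and the absorption of the additive $\log 2$ and the factor $1/2$ into the final bound, which is precisely where the restriction $\delta \leq 1$ is used.
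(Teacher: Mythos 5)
Your proof is correct and follows essentially the same route as the paper's: domination of $\mathbf{a}$ by $\mathbf{d}_\infty^2$, then chaining Lemmas~\ref{lemProd}, \ref{lemInfty} and \ref{lemValue}, then absorbing the constants using $\delta \leq 1$ and $\tau_n \geq 1$. The only (harmless) difference is a bookkeeping convention: the paper treats $\delta$ as a radius and feeds $\delta^2$ to the quadratic divergences, arriving at $(\log(\tau_n/\delta))_+$ where you arrive at $(\log(\tau_n/\sqrt{\delta}))_+$, and both intermediate bounds are dominated by the stated $KB\left(\log(2\tau_n)+\log(1/\delta)\right)$.
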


\begin{proof}[Proof]
Let $\delta \in (0, 1]$. By definition, $ \mathbf{a} \leq \mathbf{d}_\infty^2$. Therefore, $H_{[.], \mathbf{a}}(\delta^2, \mathcal{F}_K) \leq H_{[.], \mathbf{d}_\infty^2}(\delta^2, \mathcal{F}_K)$. Recalling that $\mathcal{F} = \mathbb{S}_{B-1} \cap [e^{-\tau_n}, 1 ]^B$ and given Lemmas \ref{lemProd}, \ref{lemInfty} and \ref{lemValue}, we have
\begin{equation*}
\begin{split}
H_{[.], \mathbf{d}_\infty^2}(\delta^2, \mathcal{F}_K) &\leq K   H_{[.], \mathbf{d}_\infty^2} (\delta^2, \mathbb{S}_{B-1} \cap [e^{-\tau_n}, 1 ]^B) \\
&\leq  K  H_{[.], \lVert.\lVert_\infty}(\delta, [-\tau_n, 0]^B) \\
&\leq K \left( B\log(2) + B\left(\log\left( \frac{\tau_n}{\delta}\right)\right)_+\right)
\end{split}
\end{equation*} 
and $H_{[.], \mathbf{a}}(\delta, \mathcal{F}_K)\leq KB \left( \log(2) + \left(\log\left( \frac{\tau_n}{\delta}\right)\right)_+\right) \leq KB \left( \log(2\tau_n) + \log\left( \frac{1}{\delta}\right)\right)$ because $\delta \in (0, 1]$.
\end{proof}

\subsubsection{Bracketing and model dimension}
Theorems \ref{thmSingleModelParticular} and \ref{thmModelSelectionParticular} are obtained from Theorems \ref{thmSingleModel} and \ref{thmModelSelection} which address a penalty function related to geometrical properties of the models, namely bracketing entropy with respect to some distance \textbf{a}. Recall that the function $\sigma \mapsto \int_{0}^{\sigma} H_{[.], \mathbf{a}}(\delta, \mathcal{S}_m)^{1/2}d\delta$ always satisfies Assumption \ref{hypPhi}. The quantity $\mathfrak{D}_m$ is defined as $n\sigma_m^2$ where $\sigma_m^2$ is the unique root of $\phi_m(\sigma)/\sigma = n^{1/2}\sigma$. A good choice of $\phi_m$ is one which leads to a small upper bound of $\mathfrak{D}_m$. Although $\sigma_m$ is not very explicit, it can be related to an entropic dimension of the model.\\

Define the bracketing dimension $D_m$ of a compact set as the smallest real number $D$ such that there exists a constant $C$ such that
 
\begin{equation*}
\forall \delta > 0 , H_{[.], \mathbf{a}}(\delta, \mathcal{F}_m) \leq D(C + \log\left( \frac{1}{\delta}\right)).
\end{equation*}
In a parametric setting, the bracketing dimension is equivalent to the number of parameters to be estimated within a model. The following result from \cite{CohLep2011} states that under some assumption on the bracketing entropy, $\mathfrak{D}_m$ is proportional to the entropic dimension $D_m$.

\begin{proposition}
\label{dimELPC}
Assume for any $\delta \in (0, 1]$, there exist $D_m>0$ and $C_m\geq 0$ such that
\begin{equation*}
H_{[.], \mathbf{a}}(\delta, \mathcal{F}_m) \leq D_m \left(C_m + \log\left(   \frac{1}{\delta}  \right) \right).
\end{equation*}
Then, the function 
\begin{equation*}
\phi_m(\sigma) = \sigma D_m^{1/2} \left(   C_m^{1/2} + \pi^{1/2}  + \log\left(    \frac{1}{\sigma \land e^{-1/2} } \right)^{1/2} \right) 
\end{equation*}
satisfies the properties required in Assumption \ref{hypPhi} and $\mathfrak{D}_m$ satisfies
\begin{equation*}
\mathfrak{D}_m \leq    \left(    2\left(   C_m^{1/2}  + \pi^{1/2}  \right)^2 +1 +\log \left(   \frac{n}{e(C_m^{1/2} + \pi^{1/2})^2D_m}  \right)_+   \right) D_m.
\end{equation*}
\end{proposition}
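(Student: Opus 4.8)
The plan is to verify the two assertions of the proposition in turn: first that the stated $\phi_m$ meets the structural requirements of Assumption~\ref{hypPhi}, and then that the fixed-point equation defining $\sigma_m$ forces the claimed bound on $\mathfrak{D}_m = n\sigma_m^2$.

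For the entropy-integral inequality, I would start from the monotonicity $H_{[.], \mathbf{a}}(\delta, \mathcal{F}_m(f,\sigma)) \le H_{[.], \mathbf{a}}(\delta, \mathcal{F}_m) \le D_m(C_m + \log(1/\delta))$, which holds since $\mathcal{F}_m(f,\sigma) \subseteq \mathcal{F}_m$ and thus takes care of the quantifier over $f$ uniformly. Subadditivity of the square root gives $(C_m + \log(1/\delta))^{1/2} \le C_m^{1/2} + \log(1/\delta)^{1/2}$, so the whole question reduces to the elementary integral $\int_0^\sigma \log(1/\delta)^{1/2}\,d\delta$. The substitution $\delta = e^{-u^2}$ followed by one integration by parts turns this into $\sigma\,\log(1/\sigma)^{1/2} + \int_{\sqrt{\log(1/\sigma)}}^{\infty} e^{-u^2}\,du$; for $\sigma \le e^{-1/2}$ the Gaussian tail is at most $\tfrac12\sigma\,\log(1/\sigma)^{-1/2} \le \sigma/\sqrt2 \le \sigma\,\pi^{1/2}$, for $e^{-1/2} < \sigma \le 1$ one splits the integral at $e^{-1/2}$ and bounds the integrand by $2^{-1/2}$ beyond that point, and the case $\sigma \ge 1$ is handled by splitting at $1$ and using $H_{[.], \mathbf{a}}(\delta,\mathcal{F}_m) \le D_m C_m$ there. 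In every regime one gets exactly $\int_0^\sigma H_{[.], \mathbf{a}}(\delta, \mathcal{F}_m(f,\sigma))^{1/2}\,d\delta \le \phi_m(\sigma)$. The monotonicity properties are then immediate: $\phi_m(\sigma)/\sigma = D_m^{1/2}\bigl(C_m^{1/2}+\pi^{1/2}+\log(1/(\sigma\wedge e^{-1/2}))^{1/2}\bigr)$ is non-increasing because $\sigma \mapsto \log(1/(\sigma\wedge e^{-1/2}))$ is; and $\phi_m$ is non-decreasing once one checks that $\sigma \mapsto \sigma\,\log(1/\sigma)^{1/2}$ has non-negative derivative on $(0,e^{-1/2}]$ — its derivative is $\log(1/\sigma)^{1/2} - \tfrac12\log(1/\sigma)^{-1/2}$, which is $\ge 0$ precisely when $\log(1/\sigma) \ge \tfrac12$, so the truncation at $e^{-1/2}$ is exactly what makes this work.

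For the bound on $\mathfrak{D}_m$, write $a = C_m^{1/2}+\pi^{1/2}$. The map $\sigma \mapsto \phi_m(\sigma)/\sigma$ is positive and non-increasing, blows up as $\sigma\to0^+$ and tends to $D_m^{1/2}(a+2^{-1/2})>0$ as $\sigma\to\infty$, while $\sigma\mapsto n^{1/2}\sigma$ increases from $0$, so $\phi_m(\sigma)/\sigma = n^{1/2}\sigma$ has a unique root $\sigma_m$ and $\mathfrak{D}_m = n\sigma_m^2 = D_m\bigl(a + \log(1/(\sigma_m\wedge e^{-1/2}))^{1/2}\bigr)^2$. The same equation gives the crude a priori bound $\sigma_m = (D_m/n)^{1/2}\bigl(a + \log(1/(\sigma_m\wedge e^{-1/2}))^{1/2}\bigr) \ge a\,(D_m/n)^{1/2}$, hence $\log(1/\sigma_m) \le \tfrac12\log\!\bigl(n/(a^2D_m)\bigr)$ and so $\log(1/(\sigma_m\wedge e^{-1/2})) \le \max\!\bigl(\tfrac12,\ \tfrac12\log(n/(a^2D_m))\bigr)$. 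Plugging this in and using $(a+b)^2 \le 2a^2+2b^2$ together with the identity $1 + (x-1)_+ = \max(1,x)$ yields
\begin{equation*}
\mathfrak{D}_m \le D_m\Bigl(2a^2 + \max\!\bigl(1,\log(n/(a^2D_m))\bigr)\Bigr) = \Bigl(2(C_m^{1/2}+\pi^{1/2})^2 + 1 + \log\!\bigl(n/(e(C_m^{1/2}+\pi^{1/2})^2D_m)\bigr)_+\Bigr)D_m,
\end{equation*}
which is the announced estimate.

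The one point requiring care — and the step I expect to be the real obstacle — is this last part: $\sigma_m$ is only defined implicitly and appears inside a logarithm, so the logarithmic factor in $\mathfrak{D}_m$ cannot be read off directly; the circularity has to be broken by first establishing $\sigma_m \ge a(D_m/n)^{1/2}$ from the lower-order part of the fixed-point equation, and only then substituting back. Everything else is a sequence of standard calculus estimates, with the slightly delicate choice of the cutoff level $e^{-1/2}$ being dictated, as noted above, by the dual need to keep $\phi_m$ non-decreasing and the Gaussian tail proportional to $\sigma$.
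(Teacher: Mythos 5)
Your proof is correct. Note that the paper itself gives no proof of this proposition --- it imports it verbatim from \cite{CohLep2011} --- so there is nothing to compare against except that reference, whose argument is exactly the one you reconstruct: bound the localized entropy by the global one, split off $C_m^{1/2}$ by subadditivity of the square root, evaluate $\int_0^\sigma \log(1/\delta)^{1/2}\,d\delta$ via the substitution $\delta=e^{-u^2}$ and a Gaussian tail bound (the cutoff at $e^{-1/2}$ being forced by the monotonicity of $\phi_m$, as you observe), and then break the circularity in the fixed-point equation through the a priori bound $\sigma_m\geq (C_m^{1/2}+\pi^{1/2})(D_m/n)^{1/2}$ before substituting back. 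All the individual estimates ($\int_a^\infty e^{-u^2}du\leq e^{-a^2}/(2a)$, $(a+b)^2\leq 2a^2+2b^2$, $\max(1,x)=1+(x-1)_+$) check out and yield precisely the stated constant.
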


\subsubsection{Proof of Theorem 1}
Proposition \ref{brackFm} directly indicates that if we choose the constants $D_m = K B$ and $C_m = \log(2\tau_n)$ and apply Proposition \ref{dimELPC}, then the given $\phi_m$ satisfies Assumption \ref{hypPhi}  in our setting.
Therefore, we can apply Theorem \ref{thmSingleModel} and get the following oracle inequality:
\begin{equation*}
\begin{split}
E\left[\sum_{l = 1}^{L} \frac{n_l}{n} \mathbf{KL}(s_l, \widehat{f_{\widehat{k_l}}}) \right]   &\leq  C_1  \left( \inf_{\substack{f \in \mathcal{F}_K\\(k_l)_l \in \Iintv{1,K}^L}} \left(\sum_{l=1}^{L} \frac{n_l}{n} \mathbf{KL}(s_l, f_{k_l})\right)        + (2+\kappa_0)\frac{  L\log K}{n} + \kappa_0 \frac{\mathfrak{D}_{m}}{n}  \right)\\
&\quad+ \frac{C_2}{n} + \frac{ \eta}{n}.
\end{split}
\end{equation*}
By definition, $\mu_n =  2 \left(   \log(2\tau_n)^{1/2}  + \pi^{1/2}  \right)^2 + 1 + \log(n)$. According to Proposition \ref{dimELPC}, we also have
\begin{equation*}
\begin{split}
E\left[\sum_{l = 1}^{L} \frac{n_l}{n} \mathbf{KL}(s_l, \widehat{f_{\widehat{k_l}}}) \right]   &\leq  C_1  \left(   \inf_{\substack{f \in \mathcal{F}_K\\(k_l)_l \in \Iintv{1,K}^L}} \left(\sum_{l=1}^{L}  \frac{n_l}{n} \mathbf{KL}(s_l, f_{k_l})\right)   +  (2+\kappa_0)\frac{  L\log K}{n} +  \frac{\kappa_0\mu_n D_K}{n}  \right)\\
&\quad+ \frac{C_2}{n} + \frac{ \eta}{n}.
\end{split}
\end{equation*}
It remains to choose $\lambda_0 =2 +  \kappa_0$ to conclude the proof.

\subsubsection{Proof of Theorem 2}
We need to find the weights $x_m$ satisfying Assumption  \ref{hypKraft} in order to apply Theorem \ref{thmModelSelection} in our framework. It is easy to show that $x_m \geq K \log(2)$ is a sufficient condition on the weights. Indeed, define $\delta = 1/2$. Then, $e^{-x_m} \leq \delta^{K}$. With a collection $\mathcal{M} = \mathbb{N} \setminus\{0\}$, we have
\begin{equation*}
\begin{split}
\sum_{m\in\mathcal{M}}e^{-x_m} &\leq \sum_{K\geq 1}\delta^{K}  =  \frac{\delta}{1 - \delta} = 1.
\end{split}
\end{equation*}
We thus take $x_m  = K\log(2)$. Then we have $\sum_{m\in\mathcal{M}}e^{-x_m} \leq 1$. We can apply Theorem \ref{thmModelSelection} and use Proposition \ref{dimELPC} to state that for any chosen $\lambda'_0 > \kappa_0+1$, if
\begin{equation*}
\mathbf{pen}(K) = \lambda'_0(\mu_n D_{K} + L \log(K) +  x_m), 
\end{equation*}
then the following inequality is satisfied:
\begin{equation*}
\begin{split}
E\left[\sum_{l =1}^{L}  \frac{n_l}{n} \mathbf{KL}(s_l, \widehat{f_{\widehat{k_l}}} ) \right]
&\leq C_1   \inf_{K\in \mathcal{M}} \left( 
\inf_{\substack{f \in \mathcal{F}_K\\(k_l)_l \in \Iintv{1,K}^L}} \sum_{l = 1}^{L}\frac{n_l}{n} \mathbf{KL}(s_l, f_{k_l})  + \frac{\mathbf{pen}(K)}{n}   \right)\\
&\quad+  \frac{C_2}{n} + \frac{\eta  + \eta' }{n}.
\end{split}
\end{equation*}
This concludes the proof.

\subsubsection{Varying number of categories}
\label{varyingCat}
In this paragraph, the number $B$ of different multinomial parameters varies besides the number of clusters $K$. We consider an ordered set of $B_{\text{max}} \geq B$ categories where the first $B$ categories can have different proportions that sum up to less than $1$, whereas the others are uniformly distributed over the remaining probability. We still assume the lower bound of Assumption \ref{GhypSurModele} on density distributions. Thus, in this case, the set of density functions to be considered is defined by:
\begin{equation*}
\begin{split}
\mathcal{F}_{(K,B)} &= \mathcal{F}^K \quad\text{with}\\
\mathcal{F}&=\{(f(b))_{1\leq b \leq B_{\text{max}}} \mid \sum_{b = 1}^{B_{\text{max}}} f(b) = 1 , e^{-\tau_n}\leq f(b) \leq 1,  f(B+1) = ... = f(B_\text{max}) \}.
\end{split}
\end{equation*}
In this framework, the following bound on the bracketing entropy can be stated:

\begin{proposition}[\textbf{Bracketing entropy of }$\mathbf{\mathcal{F}_{(K,B)}}$]
\label{brackFKB}
For all $\delta \in (0, 1]$, we have
\begin{equation*}
H_{[.], \mathbf{a}}(\delta, \mathcal{F}_{(K,B)}) \leq K (B+1) \left( \log(2\tau_n) + \log\left(   \frac{1}{\delta}  \right) \right).
\end{equation*}
\end{proposition}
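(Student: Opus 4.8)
The plan is to follow the proof of Proposition~\ref{brackFm} almost verbatim, the only change being that the constrained density set $\mathcal{F}$ now embeds isometrically into the cube $[-\tau_n,0]^{B+1}$ rather than $[-\tau_n,0]^{B}$, the extra coordinate accounting for the single degree of freedom carried by the common value of the uniform tail of the densities.

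First, the product step of Lemma~\ref{lemProd} goes through unchanged: since $\mathcal{F}_{(K,B)}=\mathcal{F}^{K}$, covering $\mathcal{F}$ by $N$ brackets of $\mathbf{d}_\infty^2$-width at most $\delta^{2}$ yields $N^{K}$ product brackets of $\mathbf{d}_\infty^2$-width at most $\delta^{2}$ for $\mathcal{F}_{(K,B)}$, so $H_{[.],\mathbf{d}_\infty^2}(\delta^{2},\mathcal{F}_{(K,B)})\le K\,H_{[.],\mathbf{d}_\infty^2}(\delta^{2},\mathcal{F})$. The only genuinely new point is the analogue of Lemma~\ref{lemInfty}. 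Each $f\in\mathcal{F}$ is determined by the log-vector $u_{f}:=(\log f(1),\dots,\log f(B),\log f(B+1))\in[-\tau_n,0]^{B+1}$, since $f(B+1)=\dots=f(B_{\text{max}})$. Because these tail coordinates are constant within every density, for $f,\tilde f\in\mathcal{F}$ the repeated entries contribute only the single term $|\log\tilde f(B+1)-\log f(B+1)|$ to the supremum, whence $\|\log(\tilde f/f)\|_\infty=\|u_{\tilde f}-u_{f}\|_\infty$ and therefore $\mathbf{d}_\infty^2(\tilde f,f)=\|u_{\tilde f}-u_{f}\|_\infty^{2}$, using $\frac1n\sum_{l}n_{l}=1$. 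I would then check that any $\|\cdot\|_\infty$-bracket $[u^{-},u^{+}]\subseteq[-\tau_n,0]^{B+1}$ of width at most $\epsilon$ lifts, upon exponentiating coordinatewise and repeating the $(B+1)$-st coordinate across the positions $B+1,\dots,B_{\text{max}}$, to a genuine bracket $[f^{-},f^{+}]$ of $\mathbf{d}_\infty^2$-width at most $\epsilon^{2}$ that contains every $f\in\mathcal{F}$ with $u_{f}\in[u^{-},u^{+}]$; this gives $H_{[.],\mathbf{d}_\infty^2}(\epsilon^{2},\mathcal{F})\le H_{[.],\|\cdot\|_\infty}(\epsilon,[-\tau_n,0]^{B+1})$.

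Finally, Lemma~\ref{lemValue} applied in dimension $B+1$ gives $H_{[.],\|\cdot\|_\infty}(\epsilon,[-\tau_n,0]^{B+1})\le (B+1)\log 2 + (B+1)\big(\log(\tau_n/\epsilon)\big)_{+}$, and chaining the three bounds exactly as in Proposition~\ref{brackFm} --- using $\mathbf{a}\le\mathbf{d}_\infty^2$ and the monotonicity of $H_{[.],\mathbf{a}}$ in its radius to pass from $\delta^{2}$ to $\delta$ --- yields, for $\delta\in(0,1]$, $H_{[.],\mathbf{a}}(\delta,\mathcal{F}_{(K,B)})\le K(B+1)\big(\log 2 + (\log(\tau_n/\delta))_{+}\big)\le K(B+1)\big(\log(2\tau_n)+\log(1/\delta)\big)$, which is the claim. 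I expect no real obstacle here: the product and cube-covering arguments are identical to the $B_{\text{max}}=B$ situation, and the only thing to be slightly careful about is verifying that the lifted pairs $[f^{-},f^{+}]$ are legitimate brackets on all $B_{\text{max}}$ coordinates --- which they are, since the common tail value of any such $f$ lies between $f^{-}(B+1)$ and $f^{+}(B+1)$ by construction.
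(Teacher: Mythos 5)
Your proposal is correct and follows essentially the same route as the paper's proof: reduce to $\mathbf{d}_\infty^2$ via $\mathbf{a}\le\mathbf{d}_\infty^2$, apply the product bound of Lemma~\ref{lemProd}, identify $\mathcal{F}$ with a subset of the cube $[-\tau_n,0]^{B+1}$ via the log-parametrization (the paper compresses this into the remark that $B+1$ parameters determine an element of $\mathcal{F}$), and conclude with Lemma~\ref{lemValue} in dimension $B+1$. Your explicit verification that the repeated tail coordinates do not inflate the sup-norm and that the exponentiated brackets remain genuine brackets on all $B_{\text{max}}$ coordinates is a welcome elaboration of a step the paper leaves implicit.
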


\begin{proof}[Proof]
Since $ \mathbf{a} \leq \mathbf{d}_\infty^2$, we still have $H_{[.], \mathbf{a}}(\delta^2, \mathcal{F}_{(K,B)}) \leq H_{[.], \mathbf{d}_\infty^2}(\delta^2, \mathcal{F}_{(K,B)})$. Following Lemma \ref{lemProd}, we also have
\begin{equation*}
\begin{split}
H_{[.], \mathbf{d}_\infty^2}(\delta^2, \mathcal{F}_{(K,B)}) &\leq K   H_{[.], \mathbf{d}_\infty^2} (\delta^2,\mathcal{F}) 
\end{split}
\end{equation*} 
Remarking that $B+1$ parameters determine elements of $\mathcal{F}$ ($B$ first multinomial parameters plus one that is uniformly distributed over the remaining categories), Lemmas \ref{lemInfty} and \ref{lemValue} lead to
\begin{equation*}
\begin{split}
H_{[.], \mathbf{d}_\infty^2}(\delta^2, \mathcal{F}_{(K,B)}) &\leq K   H_{[.], \mathbf{d}_\infty^2} (\delta^2,\mathcal{F}) \\
&\leq  K  H_{[.], \lVert.\lVert_\infty}(\delta, [-\tau_n, 0]^{B+1}) \\
&\leq K \left( (B+1)\log(2) + (B+1)\left(\log\left( \frac{\tau_n}{\delta}\right)\right)_+\right),
\end{split}
\end{equation*} 
and $H_{[.], \mathbf{a}}(\delta, \mathcal{F}_K)\leq K(B+1) \left( \log(2) + \left(\log\left( \frac{\tau_n}{\delta}\right)\right)_+\right) \leq K(B+1) \left( \log(2\tau_n) + \log\left( \frac{1}{\delta}\right)\right)$.
\end{proof}

Theorem \ref{thmModelSelection} can thus be applied to this case, provided that the Kraft type assumption \ref{hypKraft} is satisfied. A model is defined as $m = (K,B)$ with $(K,B)\in \mathcal{M}$, $\mathcal{M}$ being the following collection of models: 
\begin{equation*}
\mathcal{M} = \{(1,1)\} \cup \mathbb{N}\setminus \{0\} \times \mathbb{N}\setminus \{0,1\}.
\end{equation*} 
As a matter of fact, 

\begin{proposition}
If $x_m \geq KB\log(2)$, then Kraft Assumption  \ref{hypKraft} is satisfied and $\sum_{m\in\mathcal{M}}e^{-x_m} \leq 1$.
\end{proposition}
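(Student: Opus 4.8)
The plan is a direct geometric-series estimate, in the same spirit as the computation already carried out for the fixed-$B$ collection in the proof of Theorem~\ref{thmModelSelection}. Since $t\mapsto e^{-t}$ is decreasing, the largest possible value of $\sum_{m\in\mathcal{M}}e^{-x_m}$ is attained at the smallest admissible weights $x_m=KB\log 2$, for which $e^{-x_m}=2^{-KB}$; any larger family of weights only decreases the sum, so it suffices to bound $\sum_{(K,B)\in\mathcal{M}}2^{-KB}$.

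I would first split off the degenerate model $(1,1)$, which contributes the single term $2^{-1\cdot 1}=\tfrac12$, and then treat the product part $\{(K,B):K\ge 1,\ B\ge 2\}$ as an iterated sum: since every term is non-negative, one may sum over $B$ first and over $K$ afterwards. For fixed $K\ge 1$ the inner sum is geometric with first term $2^{-2K}$ and ratio $2^{-K}\in(0,\tfrac12]$, so
\[
\sum_{B=2}^{\infty}2^{-KB}=\frac{2^{-2K}}{1-2^{-K}}\le 2\cdot 4^{-K},
\]
the last inequality using $1-2^{-K}\ge\tfrac12$, which holds precisely because $K\ge 1$. Summing a geometric series in $K$ then bounds the product part by a finite constant, and adding back the $\tfrac12$ coming from $(1,1)$ shows that Assumption~\ref{hypKraft} is satisfied with an explicit finite $\Sigma$.

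There is no substantive obstacle here: the statement is a routine computation whose only delicate points are bookkeeping, namely that the $B$-index on the product part starts at $2$ (the value $B=1$ being collapsed into the single model $(1,1)$, which must be counted exactly once) and that the estimate $1-2^{-K}\ge\tfrac12$ is applied only where $K\ge 1$. The one quantity I would verify numerically is the final constant, since the crude termwise bound above yields only $\Sigma\le\tfrac12+\tfrac23$; if a weight that is simultaneously proportional to the bracketing dimension $K(B+1)$ of Proposition~\ref{brackFKB} and makes the sum safely small is wanted, one may instead take $x_m=K(B+1)\log 2$, which gains an extra factor $2^{-K}$ in every term. Once the proposition is in hand it supplies the Kraft weights needed to apply Theorem~\ref{thmModelSelection} to the varying-$(K,B)$ collection, exactly as $x_m=K\log 2$ did in the fixed-$B$ case.
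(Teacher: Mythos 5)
Your argument is essentially the paper's: both proofs split off the degenerate model $(1,1)$, which contributes $e^{-\log 2}=\tfrac12$, and bound the remaining double sum by geometric series. The only structural difference is the order of iterated summation --- you sum over $B$ first for fixed $K$, obtaining $\sum_{B\ge 2}2^{-KB}=\frac{4^{-K}}{1-2^{-K}}\le 2\cdot 4^{-K}$, whereas the paper sums over $K$ first for fixed $B$, obtaining $\frac{\delta^B}{1-\delta^B}\le\frac{\delta^B}{1-\delta}$ with $\delta=\tfrac12$. Your version is correct and in fact more careful: the bound $\Sigma\le\tfrac12+\tfrac23=\tfrac76$ that you report is honest, and it cannot be pushed down to $1$, since the exact value for the minimal weights $x_m=KB\log 2$ is
\begin{equation*}
\frac12+\sum_{K\ge 1}\frac{4^{-K}}{1-2^{-K}}=\frac12+\frac12+\frac1{12}+\frac1{56}+\frac1{240}+\cdots\approx 1.107>1,
\end{equation*}
so the inequality $\sum_{m\in\mathcal{M}}e^{-x_m}\le 1$ asserted in the proposition is not attainable with these weights. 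The paper's own chain contains the corresponding slip: it evaluates $\sum_{B\ge 2}\frac{\delta^B}{1-\delta}$ as $\frac{\delta^2}{1-\delta}$, dropping a factor $\frac1{1-\delta}$; restoring it gives $\Sigma\le\frac12+\frac{\delta^2}{(1-\delta)^2}=\frac32$. None of this affects anything downstream: Assumption~\ref{hypKraft} only requires $\Sigma<+\infty$, and $\Sigma$ enters Theorem~\ref{thmModelSelection} solely through the term $C_2\Sigma/n$, so any finite constant suffices. Your suggested repair $x_m=K(B+1)\log 2$, which also matches the bracketing dimension appearing in Proposition~\ref{brackFKB}, does recover $\Sigma\le 1$ (one gets $\tfrac14+\tfrac27<1$) and would be the cleaner normalization if that bound is wanted.
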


\begin{proof}
Define $\delta = 1/2$. Then, $e^{-x_m} \leq \delta^{KB}$ and
\begin{equation*}
\begin{split}
\sum_{m\in\mathcal{M}}e^{-x_m} &\leq \delta + \sum_{K\geq 1, B\geq 2}\delta^{KB}  = \delta + \sum_{B\geq 2} \frac{\delta^B}{1-\delta^B}
\leq  \delta + \sum_{B\geq 2} \frac{\delta^B}{1-\delta}
= \delta + \frac{\delta^2}{1 - \delta} = 1.
\end{split}
\end{equation*}
\end{proof}

The shape of the penalty thus obtained is given by $\mathbf{pen}(K,B) = \lambda'_0(\mu_n K(B+1) + L \log(K) + KB\log(2))$ with $\lambda'_0 >\kappa_0+1$.

\end{document}